\newtheorem*{rep@theorem}{\rep@title}
\newcommand{\newreptheorem}[2]{%
\newenvironment{rep#1}[1]{%
 \def\rep@title{#2 \ref{##1}}%
 \begin{rep@theorem}}%
 {\end{rep@theorem}}}
\renewcommand{\O}{\mathcal{O}}
\newcommand{\QQ}{\mathbf{Q}}
\newcommand{\QQp}{\mathbf{Q}_p}
\newcommand{\QQpb}{\overline{\mathbf{Q}}_p}
\newcommand{\ZZ}{\mathbf{Z}}
\newcommand{\FFp}{\mathbf{F}_p}
\newcommand{\FFpb}{\overline{\mathbf{F}}_p}
\newcommand{\Gal}{\mathrm{Gal}}
\newcommand{\Hom}{\mathrm{Hom}}
\newcommand{\HT}{\mathrm{HT}}
\newcommand{\GL}{\mathrm{GL}}
\newcommand{\1}{\mathbf{1}}
\newcommand{\chicyc}{\chi_{\mathrm{cyc}}}
\newcommand{\ovr}[1]{\overline{#1}}
\newcommand\isomto{\stackrel{\sim}{\smash{\longrightarrow}\rule{0pt}{0.4ex}}}
\theoremstyle{definition}
\newtheorem{defn}{Definition}[section]
\theoremstyle{plain}
\newtheorem{thm}[defn]{Theorem}
\newtheorem{lem}[defn]{Lemma}
\newtheorem{prop}[defn]{Proposition}
\newtheorem{cor}[defn]{Corollary}
\newtheorem{hypo}[defn]{Hypothesis}
\newtheorem{thmx}{Theorem}
\theoremstyle{remark}
\newtheorem{rem}[defn]{Remark}
\title{Packets of Serre weights for generic locally reducible two-dimensional Galois representations}
\author{Misja F.A. Steinmetz}
\email{m.f.a.steinmetz@math.leidenuniv.nl}
\begin{document}

\begin{abstract}
Suppose $K/\QQp$ is finite and $\ovr{r}\colon G_K\to \GL_2(\FFpb)$ is a reducible Galois representation. In this paper we prove that we can use the results from \cite{ste22} to obtain a decomposition of the set of Serre weights $W(\ovr{r})$ into a disjoint union of at most $(e+1)^f$ `packets' of weights (where $f$ is the residue degree and $e$ the ramification degree of $K$) under the assumption that $\ovr{r}$ is weakly generic. Thereby, we improve on results of  \cite{ds15} which give a similar decomposition, by rather different methods, under the assumption that $\ovr{r}$ is strongly generic. We show that our definition of weak genericity is optimal for the results of this paper to hold when $e=1$. However, we expect that for $e=2$ one of the main results of this paper still holds under weaker hypotheses than the ones in this paper.
\end{abstract}

\maketitle

\section{Introduction}

Let $p$ be a prime. In \cite{ser87} Serre conjectured that an irreducible, odd, mod $p$ Galois representation $\ovr{\rho}\colon G_{\QQ}\to \GL_2(\FFpb)$ must arise as the reduction of the Galois representation attached to a modular form. Moreover, Serre gives a precise recipe for the weight and level of the modular form in terms of $\ovr{\rho}$. Generalisations of Serre's conjecture to the case of Hilbert modular forms over a totally real number field $F$ and representations $\ovr{\rho}\colon G_F\to\GL_2(\FFpb)$ have been the subject of much research in the last decade. Initally, \cite{bdj10} gave a generalisation for such $F$ under the assumption that $p$ is unramified in $F$. This assumption was later removed by \cite{sch08}, \cite{gee11} and \cite{blgg13}. Similarly to the original conjecture, these conjectures attach a set of weights $W(\ovr{\rho})$ to the representation for which $\ovr{\rho}$ is conjectured to be modular. The so-called weight part of the conjecture was resolved in \cite{gls15} building on work of \cite{blgg13}, \cite{gk14} and \cite{new14}. This says that if the representation is modular, then it must be modular for a weight in $W(\ovr{\rho})$. A general aspect of these conjectures is that $\sigma\in W(\ovr{\rho})$ is defined as $\sigma = \otimes_{v\mid p}\sigma_v$ with $\sigma_v\in W(\ovr{\rho}|_{G_{F_v}})$, for all places $v\mid p$ of $F$, where $F_v$ is the completion of $F$ at $v$. Therefore, we may, and will, work completely locally.

Suppose then that $K/\QQp$ is a finite extension of residue degree $f$ and ramification degree $e$. Given $\ovr{r}\colon G_K\to\GL_2(\FFpb)$, we would like to study the set of Serre weights $W(\ovr{r})$ associated to $\ovr{r}$. Since $W(\ovr{r})$ is explicitly defined when $\ovr{r}$ is irreducible (see Defn.~\ref{defn:w-exp-semisimple}), we will assume $\ovr{r}$ is reducible
\[
\ovr{r}\sim \begin{pmatrix} 
\chi_1 & c_{\ovr{r}} \\ 0 & \chi_2
\end{pmatrix},
\]
for characters $\chi_1,\chi_2\colon G_K \to \FFpb^\times$. The class $c_{\ovr{r}}$ is an element of $\mathrm{Ext}^1_{\FFpb[G_K]}(\chi_2,\chi_1) = H^1(G_K,\FFpb(\chi_1\chi_2^{-1})$. 

In fact, if $c_{\ovr{r}} = 0$, then $W(\ovr{r})$ is also explicitly defined (see Defn.~\ref{defn:w-exp-semisimple}). If this is not the case, then $W(\ovr{r})$ is defined as follows. Given a Serre weight $\sigma\in W(\ovr{r}^\mathrm{ss})$ (see Defn.~\ref{defn:w-exp-semisimple}), we usually define a subspace $L_\sigma(\chi_1,\chi_2)\subseteq H^1(G_K,\FFpb(\chi_1\chi_2^{-1})$ in terms of reductions of crystalline representations of specified Hodge--Tate weights (see Defn.~\ref{defn:L_sigma-chi1-chi2}). Then we define $\sigma\in W(\ovr{r})$ if and only if $c_{\ovr{r}} \in L_\sigma(\chi_1,\chi_2)$. Unfortunately, the definition of $L_\sigma(\chi_1,\chi_2)$ in terms of $p$-adic Hodge theory is neither explicit nor computable. The approach of \cite{ddr16}, \cite{cegm17} and \cite{ste22} to make this explicit is to use local class field theory and the Artin--Hasse exponential to define explicit basis elements $c_{i,j}$ of $H^1(G_K,\FFpb(\chi_1\chi_2^{-1})$ for $0\le i <f$ and $0\le j <e$; an alternative approach to making the subspace $L_\sigma(\chi_1,\chi_2)$ explicit using Kummer theory can be found in \cite{bs22}. In \cite{ste22} an indexing set $J_\sigma(\chi_1,\chi_2)$ is defined (see Defn.~\ref{defn:J^AH}) and it is proved that $L_\sigma(\chi_1,\chi_2) = \mathrm{Span}(\{c_{i,j}\mid (i,j)\in J_\sigma(\chi_1,\chi_2)\})$. Unfortunately, the definition of $J_\sigma(\chi_1,\chi_2)$ is combinatorial and not as straightforward as one would have hoped. The first main result of this paper is that under the assumption that $\ovr{r}$ is weakly generic (see Hypo.~\ref{hypo:generic}), the definition of $J_\sigma(\chi_1,\chi_2)$ can be greatly simplified.

\begin{thmx}
Suppose $\ovr{r}$ is weakly generic and $\sigma\in W(\ovr{r}^\mathrm{ss})$. Then there exists an $\ell\in \ZZ^f$ (depending on $\ovr{r}$ and $\sigma$) with $\ell_i\in [0,e]$, for all $i$, such that
\[
L_\sigma(\chi_1,\chi_2) = \mathrm{Span}(\{c_{i,j}\mid j<\ell_i\}).
\]
\end{thmx}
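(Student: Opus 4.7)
The desired statement is, via the identification $L_\sigma(\chi_1, \chi_2) = \mathrm{Span}(\{c_{i,j} \mid (i,j) \in J_\sigma(\chi_1, \chi_2)\})$ from \cite{ste22}, equivalent to the purely combinatorial claim that $J_\sigma(\chi_1, \chi_2)$ has a ``staircase'' shape: for each $i \in \{0, 1, \ldots, f-1\}$ there exists $\ell_i \in [0, e]$ with
\[
\{j : (i, j) \in J_\sigma(\chi_1, \chi_2)\} = \{0, 1, \ldots, \ell_i - 1\}.
\]
My plan is to prove this purely combinatorial claim about $J_\sigma$, and then simply read off the $\ell_i$.

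I would proceed in two steps. First, I would unpack Defn.~\ref{defn:J^AH} explicitly under the weak genericity hypothesis: in general, the definition of $J_\sigma(\chi_1, \chi_2)$ in \cite{ste22} is an interlinked combinatorial recipe involving, for each embedding $i$, both a ``local'' contribution depending on the Hodge--Tate weights of $\sigma$ at $i$ and certain ``cross'' contributions tracking coincidences between $\chi_1, \chi_2$ across different embeddings. The weak genericity hypothesis Hypo.~\ref{hypo:generic} should be precisely what is needed to rule out these cross contributions, reducing the criterion for $(i, j) \in J_\sigma$ to a purely local condition depending only on $i$ and $j$. Second, I would show that this remaining local condition is monotone in $j$: it holds at $(i, j)$ if and only if $j$ lies in some initial segment of $\{0, 1, \ldots, e - 1\}$. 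Together these give the desired staircase, with $\ell_i$ equal to the length of that segment.

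The main obstacle is the first step. The combinatorial definition of $J_\sigma$ in the general reducible case allows the allowed $j$-values at a given $i$ to be influenced by data at other embeddings, and verifying that weak genericity is strong enough to decouple all such interactions will require a careful case analysis of Hypo.~\ref{hypo:generic}. The fact that \cite{ds15} needed a \emph{strongly} generic hypothesis to obtain a comparable decomposition suggests that some of the interactions are delicate, and one should expect to handle edge cases where $\ell_i \in \{0, e\}$ separately from the generic range $0 < \ell_i < e$. Once the local reduction is achieved, however, the monotonicity statement should follow straightforwardly from the explicit form of the local condition, which essentially compares the $j$-component to a Hodge--Tate-weight-type invariant determined by $\sigma$ at the embedding $i$.
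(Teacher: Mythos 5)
Your high-level reduction — replace $L_\sigma$ by $J_\sigma$ via Thm.~\ref{thm:L=L^AH} and prove a ``staircase'' shape — is the same as the paper's. But the core idea in your first step is wrong, and it is wrong in a way that inverts the mechanism of the actual proof. You claim that weak genericity is ``precisely what is needed to rule out these cross contributions, reducing the criterion for $(i,j)\in J_\sigma$ to a purely local condition depending only on $i$ and $j$.'' It does not and cannot rule them out: the dimension vector $\ell$ in Defn.~\ref{defn:dim-vec-l} itself has $\ell_\tau$ depending on whether $\tau\circ\varphi^{-1}\in J^\mathrm{max}$, so the count at embedding $\tau$ is by definition a function of data at the \emph{neighbouring} embedding. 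Concretely, the proof tracks the injection $\iota\colon (\kappa,c)\mapsto\alpha$, and the key dichotomy (Props.~\ref{prop:comb-prop2}, \ref{prop:comb-prop3}) is that $v_p(\xi_\kappa - c(p^f-1))$ equals $0$ for generic $c\in\mathcal{I}_\kappa$ but equals exactly $1$ when $c = t_\kappa$ and $t_\kappa < r_\kappa$, so that $\iota(\kappa,t_\kappa)$ is indexed by $\tau_\alpha = \kappa\circ\varphi$ rather than $\kappa$ -- i.e.\ the extra element $\{t_\kappa\}$ of $\mathcal{I}_\kappa$ in Defn.~\ref{defn:J^AH} is donated to the next embedding. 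The resulting count satisfies
\[
|\iota(\iota^{-1}(\{(m_{\tau,j},k)\})) | = |\mathcal{I}_\tau| \pm 1,
\]
with the $\pm 1$ correction determined by the $J$-membership of $\tau$ and $\tau\circ\varphi^{-1}$. What weak genericity actually buys is not locality but \emph{bounded range}: via Prop.~\ref{prop:comb-prop3}, it forces $v_p(\xi_\kappa - t_\kappa(p^f-1))\le 1$, so the shift is by exactly one embedding (nearest neighbour) rather than by $m>1$ embeddings. Without that hypothesis, contributions could leap further and the per-embedding tally would fail. Your plan, built on a locality claim that is false, would not produce the right value of $\ell_\tau$ (and indeed could not, since the correct $\ell_\tau$ depends on $J$ at $\tau\circ\varphi^{-1}$).

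A secondary gap: even after the count at each embedding is established, you still need to know those $\ell_\tau$ elements are $(m_{\tau,0},k),\dotsc,(m_{\tau,\ell_\tau-1},k)$ and not some other subset of size $\ell_\tau$. You flag this as ``monotonicity'' but call it straightforward from ``the explicit form of the local condition''; in the paper this is Prop.~\ref{prop:comb-prop4}, which requires genuine bounds of the shape
\[
0 < \frac{\xi_\kappa - c(p^f-1)}{p^m} < \frac{ep}{p-1}(p^f-1)
\]
combined with the weak-genericity window $\Omega_{\tau,n}\in\left[\tfrac{e(p^f-1)}{p-1},\,(p-e)\tfrac{p^f-1}{p-1}\right]$ to pin down the residue of $\xi_\kappa - (s_\kappa-1)(p^f-1)$ modulo $p^f-1$ to a specific $m_{\kappa,j}$. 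This step is far from formal. You also omit entirely the need for an analogue of Prop.~\ref{prop:generic-implies-unique-weight} (uniqueness of the solution $r=r(J,c)$ to the defining congruence away from two explicit exceptional cases), which the paper uses repeatedly to control $y_{0,\tau}$ and hence the residues mod $p$ appearing in Props.~\ref{prop:comb-prop2}--\ref{prop:comb-prop3}.
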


This is Thm.~\ref{thm:gen-conj} below. The proof of this theorem positively resolves Conjecture 7.4.8 from \cite{ste20}. Then we go on to use this result to prove a decomposition of $W(\ovr{r})$. For each $w\in \ZZ^f$ with $w_i\in [0,e]$ for all $i$, we define a subset $P_w \subseteq W(\ovr{r}^\mathrm{ss})$ that is typically of size $2^{f-\delta_w}$ for $\delta_w = |\{i \mid w_i = 0\text{ or }w_i=e\}|$. Order the set of all such $w$ by the natural product ordering, i.e. define $\le$ by
\[
(w_0,w_1,\dotsc,w_{f-1}) \le (w_0',w_1',\dotsc,w_{f-1}')\text{ if and only if } w_i\le w_i'\text{ for all $i$.}
\]

\begin{thmx}
Suppose $\ovr{r}$ is weakly generic. Then we have
\[
W(\ovr{r}) = \coprod_{w\le w^\mathrm{max}} P_w
\]
for some $w^\mathrm{max}$ depending on $\ovr{r}$.
\end{thmx}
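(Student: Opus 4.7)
The strategy is to combine Theorem A with a direct examination of the coordinates of $c_{\ovr{r}}$ in the explicit basis $\{c_{i,j}\}$ of $H^1(G_K, \FFpb(\chi_1\chi_2^{-1}))$.

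First, I would define the packets by grouping weights according to the invariant supplied by Theorem A. For each $\sigma \in W(\ovr{r}^{\mathrm{ss}})$, Theorem A produces a unique $\ell_\sigma \in [0,e]^f$ with $L_\sigma(\chi_1,\chi_2) = \mathrm{Span}\{c_{i,j} : j < \ell_{\sigma,i}\}$. I would set $P_w$ to be the preimage of $w$ under the map $\sigma \mapsto \ell_\sigma$ (possibly after the reindexing $w \leftrightarrow (e,\ldots,e) - w$ to match the statement's convention). The size formula $|P_w| = 2^{f-\delta_w}$ would then follow from the explicit parametrization of $W(\ovr{r}^{\mathrm{ss}})$ in Defn.~\ref{defn:w-exp-semisimple}: at each of the $\delta_w$ indices $i$ with $w_i \in \{0,e\}$ the weight component is forced, while each of the remaining $f - \delta_w$ indices contributes an independent binary choice.

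Next, I would expand $c_{\ovr{r}} = \sum_{i,j} a_{i,j}\, c_{i,j}$ in the fixed basis and define $w^{\ast} \in [0,e]^f$ by $w^{\ast}_i = \max\{j+1 : a_{i,j} \neq 0\}$, with $w^{\ast}_i = 0$ if the entire $i$-th row of coefficients vanishes. The inclusion $c_{\ovr{r}} \in \mathrm{Span}\{c_{i,j} : j < w_i\}$ is equivalent to $a_{i,j} = 0$ for every $(i,j)$ with $j \ge w_i$, which in turn is equivalent to $w \ge w^{\ast}$ coordinatewise. Combining this observation with Theorem A yields $\sigma \in W(\ovr{r})$ if and only if $\ell_\sigma \ge w^{\ast}$, i.e., if and only if $\sigma$ lies in $P_w$ for some $w \ge w^{\ast}$. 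Setting $w^{\max}$ to be the image of $w^{\ast}$ under the reindexing converts this into the advertised decomposition.

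The main obstacle is the identification of the packets $P_w$ with the level sets of $\sigma \mapsto \ell_\sigma$, together with the verification that the packets depend only on $\ovr{r}^{\mathrm{ss}}$ (and not on the extension class $c_{\ovr{r}}$) and have exactly the claimed cardinality. Once this combinatorial identification is pinned down, the decomposition of $W(\ovr{r})$ follows formally from the monotonicity observation of the second step, since the family of subspaces $\mathrm{Span}\{c_{i,j} : j < w_i\}$ is totally ordered by the product order on $w$ and therefore admits a unique minimal element containing the fixed vector $c_{\ovr{r}}$.
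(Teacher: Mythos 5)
Your strategy is essentially the paper's: use Theorem~A to reduce membership in $L_\sigma(\chi_1,\chi_2)$ to a coordinate-wise condition on a vector $\ell_\sigma$, expand $c_{\ovr r}$ in the explicit basis, observe that the set of $w$ for which $c_{\ovr r}\in L_w$ is determined coordinate-wise, and extract a single extremal $w$. Modulo the convention swap $w\leftrightarrow (e,\dotsc,e)-w$, this is the same argument as in the proof of Thm.~\ref{thm:mainthm}.

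Two caveats, one cosmetic and one substantive. Cosmetically, the family $\{L_w\}_{w\in\mathcal{W}}$ is \emph{not} totally ordered once $f\ge 2$ (e.g.\ $w=(1,0)$ and $w'=(0,1)$ give incomparable subspaces); what your argument actually uses, and what your preceding sentence correctly supplies, is that $\{w : c_{\ovr r}\in L_w\}$ is an up-set defined coordinate-wise and hence has a unique minimal element $w^\ast$. Substantively, you write $c_{\ovr r}=\sum_{i,j} a_{i,j}c_{i,j}$, implicitly assuming $c_{\ovr r}$ lies in the span of the $c_{i,j}$. By Prop.~\ref{prop:h-1-basis} the basis of $H^1(G_K,\FFpb(\chi))$ contains an additional element $c_\mathrm{tr}$ when $\chi$ is cyclotomic (which is compatible with weak genericity), and $c_\mathrm{tr}$ lies in no $L_w$. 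If $c_{\ovr r}$ has a nonzero $c_\mathrm{tr}$-component, your expansion does not exist and the argument collapses; this is exactly the tr\`es ramifi\'ee case, which the paper's Thm.~\ref{thm:mainthm} carves out as an explicit exception, where $W(\ovr r)$ is a singleton falling outside every $P_w$. (The analogous extra element $c_\mathrm{un}$ in the trivial-$\chi$ case is harmless, since it belongs to every $L_w$.) Finally, a minor point: your asserted formula $|P_w|=2^{f-\delta_w}$ is only valid in the strongly generic case (Prop.~\ref{prop:P-w-size-strong-case}); under weak genericity one only has $|P_w|\le 2^{f-\delta_w}$ (Prop.~\ref{prop:P-w-size-weak-case}), though this plays no role in establishing the decomposition itself.
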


In other words, this theorem says that the weights come in packets and these packets respect the natural ordering on the indices $w$. This is Thm.~\ref{thm:mainthm} below.

In \S\ref{sec:rem-on-genericity} we show that our definition of weak genericity is optimal for the results of this paper to hold when $e=1$. We also give a short explanation of why we expect that Thm.~A will still hold under weaker assumptions when $e=2$.

The paper \cite{ds15} inspired us to write this paper and we have certainly borrowed from their clear style and exposition. However, we use completely different methods to obtain similar results and, indeed, the techniques used here were not yet available when \cite{ds15} was written. Moreover, our genericity hypothesis is weaker than that used in \cite{ds15}. To distinguish the two cases we have referred to their hypothesis as strongly generic and to our hypothesis as weakly generic (see Hypo.~\ref{hypo:generic}). In particular, Thm.~\ref{thm:mainthm} and Prop.~\ref{prop:P-w-size-strong-case} recover the main local result of \cite{ds15} when $\ovr{r}$ is strongly generic. In \S\ref{sec:rem-on-genericity} we have included a short discussion of cases in which the results of this paper apply, but that cannot be proved using the methods of \cite{ds15}. This shows that our different approach has resulted in more than could have been achieved by the methods of \cite{ds15}, even if the authors had opted to use more difficult combinatorics. Our weakening of the generic hypothesis does come at the expense of no longer having a easy expression for the size of $P_w$ (cf. Prop.~\ref{prop:P-w-size-strong-case} and Prop.~\ref{prop:P-w-size-weak-case}). A more involved expression for $|P_w|$ may exists also in the weakly generic case -- this essentially boils down to a good understanding of when $(J,x)$ is not maximal for $\sigma$ -- but we have chosen not to pursue this direction for brevity.

We believe that it should be possible to use the techniques \cite{bs22} to recover the results of this paper, but we have not attempted to do this.

\subsection{Acknowledgements} We would like to thank Fred Diamond for his small remark that sparked our investigation into the main results of this paper. We would also like to thank the Mathematical Institute at Leiden University for their support while working on this paper. We would like to thank Robin Bartlett for his comments on an early version of this paper. Naturally, the author is responsible for any remaining errors.

\subsection{Notation}\label{subsec:notation}

We will write $K/\QQp$ for a finite extension with residue degree $f$ and ramification degree $e$. Denote its residue field by $k$. We fix a choice of uniformiser $\pi\in K$. We also fix an algebraic closure $\ovr{K}$ and a $(p^f-1)$-th root $\pi^{1/(p^f-1)}\in \ovr{K}$ of $\pi$. Let $\omega\colon G_K \to k^\times$ denote the character defined by letting $\omega(g)$ equal the image of $\frac{g(\pi^{1/(p^f-1)})}{\pi^{1/(p^f-1)}}$ in $k^\times$. The restriction of $\omega$ to the inertia subgroup $I_K$ of $G_K$ is independent of the choice of $\pi^{1/(p^f-1)}$. For any $\tau\in \Hom_{\FFp}(k,\FFpb)$, we obtain a character $\omega_\tau\colon G_K\to \FFpb^\times$ as $\omega_\tau:=\tau \circ \omega$. Unless otherwise stated we write $\varphi\colon k\to k$ for the $p$-th power map $\varphi(x) = x^p$ (and similarly for other finite fields).

A Serre weight $\sigma$ is an isomorphism class of irreducible $\FFpb$-representations of $\GL_2(k)$. Recall that these are represented by
\[
\sigma_{a,b} = \bigotimes_{\tau \in \Hom_{\FFp}(k,\FFpb)} \left(\mathrm{det}^{b_\tau} \otimes_k \mathrm{Sym}^{a_\tau - b_\tau} k^2\right) \otimes_{k,\tau} \FFpb
\]
for uniquely determined integers $a_\tau,b_\tau$ satisfying $b_\tau,a_\tau - b_\tau \in [0,p-1]$ and not all $b_\tau$ equal to $p-1$. When convenient we may relax this restriction to $b_\tau\in\ZZ$, while still requiring $a_\tau - b_\tau \in [0,p-1]$, for all $\tau$ and use the identification $\sigma_{a,b}\cong \sigma_{a',b'}$ if and only if $a_\tau-b_\tau =a_\tau' - b_\tau '$ for all $\tau$ and $\sum_{i=0}^{f-1}b_{\tau\circ\varphi^i}p^i \equiv \sum_{i=0}^{f-1}b_{\tau\circ\varphi^i}' p^i\bmod{(p^f-1)}$ for any $\tau$.

We write $\chicyc$ for the $p$-adic cyclotomic character. Suppose $V$ is a Hodge--Tate representation of $G_K$ on a $\QQpb$-vector space. For each $\kappa\in \Hom_{\QQp}(K,\QQpb)$, the $\kappa$-Hodge Tate weights $\HT_\kappa(V)$ is the multiset of integers containing $i$ with multiplicity
\[
\dim_{\QQpb}\left( V\otimes_{\kappa,K} \widehat{\ovr{K}}(-i)\right)^{G_K},
\]
where $\widehat{\ovr{K}}(i)$ denotes the completed algebraic closure of $K$ with the twisted $G_K$-action $g(a) = \chicyc(g)^i g(a)$. Thus, $\HT_\kappa(\chicyc) = \{1\}$ for every $\kappa$.

For any tuple $a = (a_\tau)_\tau \in \ZZ^{\Hom_{\FFp}(k,\FFpb)}$, we define
\[
\Omega_{\tau,a} := \sum_{i=0}^{f-1} p^i a_{\tau \circ \varphi^i}.
\]

\section{Explicit Serre weights in the semisimple case}

Let us begin by defining the set of Serre weights in terms of crystalline lifts.

\begin{defn} For a Serre weight $\sigma = \sigma_{a,b}$, we will say a tuple of integers $\tilde{\sigma} = (\tilde{a}_\kappa,\tilde{b}_\kappa)_{\kappa \in \Hom_{\QQp}(K,\QQpb)}$ is a lift of $\sigma$ if, for each $\tau\colon k\to \FFpb$, there is an indexing
\[
\left\{\kappa \in \Hom_{\QQp}(K,\QQpb)\mid \kappa|_{k} = \tau\right\} = \{\tau_0,\tau_1,\dotsc,\tau_{e-1}\}
\]
such that
\[
(\tilde{a}_\kappa,\tilde{b}_\kappa) = \begin{cases}(a_\tau +1, b_\tau) &\text{ if }\kappa = \tau_0; \\(1,0) &\text{ if }\kappa = \tau_i\text{ for }i>0. \end{cases}
\]
We say a crystalline representation of $G_K$ on a finite free $\ovr{\mathbf{Z}}_p$-module $V$ has Hodge type $\sigma$ if there exists a lift $\tilde{\sigma} = (\tilde{a}_\kappa,\tilde{b}_\kappa)$ of $\sigma$ such that
\[
\HT_\kappa(V) = (\tilde{a}_\kappa,\tilde{b}_\kappa)
\] 
for every $\kappa\colon K\to \QQpb$.
\end{defn}

\begin{defn}
Suppose $\ovr{r}\colon G_K\to \GL_2(\FFpb)$ is a continuous representation. We write $W^\mathrm{cr}(\ovr{r})$ for the set of Serre weights $\sigma_{a,b}$ for which there exists a crystalline representation of $G_K$ on a finite free $\ovr{\mathbf{Z}}_p$-module $V$ with Hodge type $\sigma_{a,b}$ such that $V\otimes_{\ovr{\mathbf{Z}}_p} \FFpb \cong \ovr{r}$.
\end{defn}

\subsection{The semisimple case}

Unfortunately, $W^\mathrm{cr}(\ovr{r})$ is neither explicit nor computable. When $\ovr{r}$ is semisimple, there is a simple explicit equivalent definition. Let us recall this definition here. 

For any character $\chi\colon G_K\to \FFpb^\times$, there exist integers $(n_\tau)_\tau \in \ZZ^{\Hom_{\FFp}(k,\FFpb)}$ such that
\[
\chi|_{I_K} = \prod_\tau \omega_\tau^{n_\tau}
\]
and these are unique if we require $n_\tau\in [1,p]$ for all $\tau$ and $n_\tau<p$ for at least one $\tau$. Since $\omega_\tau^p = \omega_{\tau \circ \varphi}$, we can write $\chi|_{I_K} = \omega_\tau^{\Omega_{\tau, n}}$ with $\Omega_{\tau,n}$ defined as in \S\ref{subsec:notation}.

Let $K_2$ denote the unique unramified extension of $K$ of degree 2 with residue field $k_2$ and let $\pi\colon\Hom_{\FFp}(k_2,\FFpb) \to \Hom_{\FFp}(k,\FFpb)$ denote the natural projection induced by restriction to $k$. For $\lambda\in \Hom_{\FFp}(k_2,\FFpb)$, let $\omega_\lambda$ denote the fundamental character of $I_{K_2} = I_{K}$ corresponding to $\lambda$. Following \cite{gls15}, we give the definition $W^\mathrm{exp}(\ovr{r})$ for semisimple $\ovr{r}$.

\begin{defn}\label{defn:w-exp-semisimple}
Suppose $\ovr{r}\colon G_K\to\GL_2(\FFpb)$ is continuous and semisimple. We define the set of Serre weights $W^\mathrm{exp}(\ovr{r})$ as follows.
\begin{enumerate}
\item If $\ovr{r}$ is irreducible, then $\sigma_{a,b}\in W^\mathrm{exp}(\ovr{r})$ if there is a subset $J\subset \Hom_{\FFp}(k_2,\FFpb)$ and integers $x_\tau \in [0,e-1]$, for all $\tau\in \Hom_{\FFp}(k,\FFpb)$, such that
\[
\ovr{r}|_{I_K}\cong \begin{pmatrix}
\prod_{\lambda\in J} \omega_\lambda^{a_{\pi(\lambda)} +1 + x_{\pi(\lambda)}}\prod_{\lambda\notin J}\omega_{\lambda}^{b_{\pi(\lambda)}+e-1-x_{\pi(\lambda)}} & 0 \\
0 & \prod_{\lambda\notin J}\omega_{\lambda}^{a_{\pi(\lambda)} + 1 +x_{\pi(\lambda)}}\prod_{\lambda\in J}\omega_{\lambda}^{b_{\pi(\lambda)}+e-1-x_{\pi(\lambda)}} \end{pmatrix}
\]
and the restriction $\pi\colon J\isomto \Hom_{\FFp}(k,\FFpb)$ is a bijection.

\item If $\ovr{r}$ is the direct sum of two characters, then $\sigma_{a,b}\in W^\mathrm{exp}(\ovr{r})$ if there exists a subset $J\subset \Hom_{\FFp}(k,\FFpb)$ and integers $x_\tau\in [0,e-1]$, for all $\tau\in \Hom_{\FFp}(k,\FFpb)$, such that
\[
\ovr{r}|_{I_K} \cong \begin{pmatrix}
\prod_{\tau\in J} \omega_\tau^{a_\tau + 1 + x_\tau}\prod_{\tau\notin J}\omega_\tau^{b_\tau + x_\tau} & 0 \\
0 & \prod_{\tau\notin J} \omega_\tau^{a_\tau+e-x_\tau}\prod_{\tau \in J} \omega_\tau^{b_\tau + e -1 -x_\tau}
\end{pmatrix}.
\]
\end{enumerate}
\end{defn}

\begin{thm}[Gee--Liu--Savitt, Wang]\label{thm:W^cr=W^exp-semisimple} If $\ovr{r}$ is semisimple, then $W^\mathrm{exp}(\ovr{r}) = W^\mathrm{cr}(\ovr{r})$.
\end{thm}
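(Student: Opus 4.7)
The plan is to prove the two inclusions separately, since each direction uses different techniques from integral $p$-adic Hodge theory. Both inclusions preserve semisimplicity, so one may work with the reducible split case and the irreducible case independently.

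For the easy inclusion $W^\mathrm{exp}(\ovr{r}) \subseteq W^\mathrm{cr}(\ovr{r})$, I would construct explicit crystalline lifts. Given a Serre weight $\sigma_{a,b}$ together with a subset $J$ and integers $(x_\tau)$ as in Defn.~\ref{defn:w-exp-semisimple}, the key observation is that for any tuple of integers $(\tilde{n}_\kappa)_{\kappa}$ one can build a crystalline character $\tilde{\chi}\colon G_K\to \ovr{\ZZ}_p^\times$ with $\HT_\kappa(\tilde{\chi}) = \tilde{n}_\kappa$ and with prescribed reduction $\chi$ (once the unramified part of $\chi$ is fixed); such characters can be built as products of Lubin--Tate characters attached to the embeddings $\kappa$. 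In the reducible split case, take the lift $\tilde{\sigma}$ for which, above each $\tau$, the weight $(a_\tau+1,b_\tau)$ is placed at the embedding $\tau_{x_\tau}$; then the $J$-data prescribes, diagonal entry by diagonal entry, whether to assign $\tilde{a}_\kappa$ or $\tilde{b}_\kappa$ to the first character. The mod $p$ inertia calculation using $\omega_\tau = \tau\circ\omega$ and $\omega_\tau^p = \omega_{\tau\circ\varphi}$ reproduces the formula in Defn.~\ref{defn:w-exp-semisimple}(2). For the irreducible case, one replaces $K$ by its unramified quadratic extension $K_2$, where $\ovr{r}|_{G_{K_2}}$ is a sum of two characters swapped by Frobenius; construct crystalline characters of $G_{K_2}$ lifting these, arranged so that the induction to $G_K$ is again crystalline of the prescribed Hodge type, and the same inertia computation yields formula (1).

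For the hard inclusion $W^\mathrm{cr}(\ovr{r}) \subseteq W^\mathrm{exp}(\ovr{r})$, I would invoke the theory of Breuil--Kisin modules. Any $G_K$-stable $\ovr{\ZZ}_p$-lattice in a crystalline representation of Hodge type $\sigma_{a,b}$ corresponds to a Breuil--Kisin module $\mathfrak{M}$ over $\ovr{\ZZ}_p[[u]]$ of height bounded by the largest Hodge--Tate weight, and $\ovr{r}$ is recovered by reducing $\mathfrak{M}$ modulo the maximal ideal and passing to an étale $\varphi$-module over $\ovr{\FF}_p((u))$. The analysis of \cite{gls15} (extended in the ramified case by Wang) classifies the possible shapes of $\mathfrak{M}\bmod p$ up to isomorphism in terms of combinatorial data $(J,(x_\tau))$, where $J$ records, embedding by embedding, which of the two Jordan--H\"older factors carries the higher Hodge--Tate weight, and $x_\tau\in[0,e-1]$ records the position within the $e$ embeddings above $\tau$ at which the jump occurs. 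Reading off the inertia action of $\ovr{r}$ from such a reduction and comparing with the fundamental characters $\omega_\tau$ (resp.\ $\omega_\lambda$) gives exactly the formula in Defn.~\ref{defn:w-exp-semisimple}.

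The main obstacle is the hard inclusion: one needs the full strength of the Kisin / Breuil--Kisin classification to enumerate all integral structures on a crystalline representation of given Hodge type, and this is substantially more delicate when $e>1$ because the Hodge filtration involves $e$ embeddings above each $\tau$ and the relevant Kisin modules can have nontrivial "position" data $x_\tau$. Managing the interaction between ramification and Frobenius twist — precisely the content of the $\Omega_{\tau,n}$ bookkeeping in \S\ref{subsec:notation} — is where the argument becomes technical; this is the heart of \cite{gls15} and the extension by Wang, and I would cite these results rather than reprove them.
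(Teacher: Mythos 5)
Your proposal correctly identifies the two inclusions, the crystalline-character construction via Lubin--Tate characters for the easy direction, and the Kisin/Breuil--Kisin module classification of reductions for the hard direction, ultimately deferring to Gee--Liu--Savitt and Wang for the substance. This matches the paper, whose proof is a direct citation of exactly those results (Gee--Liu--Savitt Thm.\ 4.1.6 for $p>2$ and Wang Thm.\ 5.4 for $p=2$), so your approach is essentially the same, just with added exposition of what lies behind the citation.
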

\begin{proof}
For $p>2$ this is \cite[Thm.~4.1.6]{gls15} and for $p=2$ this is \cite[Thm.~5.4]{wan17}.
\end{proof}

\section{Explicit Serre weights in the non-semisimple case}

Having treated the semisimple case in the previous section, we will now assume $\ovr{r}$ is reducible. Then $\ovr{r}$ is of the form
\[
\ovr{r} \sim \begin{pmatrix}\chi_1 & * \\ 0 & \chi_2 \end{pmatrix}
\]
for characters $\chi_1,\chi_2\colon G_K\to \FFpb^\times$. The results of \cite{gls15} show that we have an inclusion $W^\mathrm{cr}(\ovr{r})\subset W^\mathrm{cr}(\ovr{r}^\mathrm{ss})$. However, this is rarely an equality. To make $W^\mathrm{cr}(\ovr{r})$ explicit we need to specify a condition on the extension class of $\ovr{r}$ which determines whether $\sigma\in W^\mathrm{cr}(\ovr{r}^\mathrm{ss})$ lies in $W^\mathrm{cr}(\ovr{r})$ or not. Note that, for any representation $\ovr{\rho}\colon G_K \to \GL_2(\FFpb)$ of the form
\[
\ovr{\rho} \sim \begin{pmatrix} \chi_1 & c_{\ovr{\rho}} \\ 0 & \chi_2 \end{pmatrix},
\]
the extension class $c_{\ovr{\rho}}$ is an element of $H^1(G_K,\FFpb(\chi))$, where $\chi = \chi_1\chi_2^{-1}$.

\begin{defn}\label{defn:L_sigma-chi1-chi2} Suppose $\sigma = \sigma_{a,b}$ is a Serre weight. Write $L_\sigma(\chi_1,\chi_2)$ for all the extension classes in $H^1(G_K,\FFpb(\chi))$ arising as the reduction of a crystalline representation $r$ of $G_K$ on a finite free $\ovr{\mathbf{Z}}_p$-module $V$ of Hodge type $\sigma$ which are of the form
\[
r\sim \begin{pmatrix} \tilde{\chi}_1 & * \\ 0 & \tilde{\chi}_2 \end{pmatrix},
\]
where $\tilde{\chi}_1$ and $\tilde{\chi}_2$ are crystalline lifts of $\chi_1$ and $\chi_2$, respectively.
\end{defn}

\begin{thm}[Gee--Liu--Savitt, Wang]\label{thm:GLS-W^cr=W^exp} Suppose that $\ovr{r}\sim\begin{pmatrix} \chi_1 & c_{\ovr{r}} \\ 0 & \chi_2 \end{pmatrix}$. We have that $\sigma \in W^\mathrm{cr}(\ovr{r})$ if and only if 
\begin{enumerate}
\item $\sigma\in W^\mathrm{exp}(\ovr{r}^\mathrm{ss})$ and
\item $c_{\ovr{r}} \in L_\sigma(\chi_1,\chi_2)$.
\end{enumerate}
Moreover, whenever $\sigma\in W^\mathrm{exp}(\ovr{r}^\mathrm{ss})$, we have that $L_\sigma(\chi_1,\chi_2)$ is a subspace.
\end{thm}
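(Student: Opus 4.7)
The plan is to deduce this theorem directly from the Gee--Liu--Savitt and Wang results underlying Theorem~\ref{thm:W^cr=W^exp-semisimple}, with Definition~\ref{defn:L_sigma-chi1-chi2} serving as the bridge between crystalline lifts and cohomology classes. The ``if'' direction is essentially immediate: if $c_{\ovr{r}}\in L_\sigma(\chi_1,\chi_2)$, then by definition there is a crystalline representation
\[
r\sim\begin{pmatrix}\tilde\chi_1 & * \\ 0 & \tilde\chi_2\end{pmatrix}
\]
of Hodge type $\sigma$ whose extension class is $c_{\ovr{r}}$. Since $\tilde\chi_i$ reduces to $\chi_i$, the reduction of $r$ is a reducible extension of $\chi_2$ by $\chi_1$ with class $c_{\ovr{r}}$, hence isomorphic to $\ovr{r}$, exhibiting $\sigma\in W^\mathrm{cr}(\ovr{r})$.

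For the converse, let $V$ be any crystalline lift of $\ovr{r}$ of Hodge type $\sigma$. Semisimplification preserves being crystalline and preserves the Hodge type, so $V^\mathrm{ss}$ is a crystalline lift of $\ovr{r}^\mathrm{ss}$ of Hodge type $\sigma$, and Theorem~\ref{thm:W^cr=W^exp-semisimple} immediately yields $\sigma\in W^\mathrm{exp}(\ovr{r}^\mathrm{ss})$. The real work is to promote $V$ to, or replace it by, an \emph{upper-triangular} crystalline lift of $\ovr{r}$ whose diagonal entries are crystalline lifts of $\chi_1$ and $\chi_2$ individually. This is precisely what the Breuil--Kisin module constructions of~\cite{gls15} (and their extension to $p=2$ in~\cite{wan17}) supply: one exhibits a Breuil--Kisin lattice inside the étale $\varphi$-module of $V$ whose $\varphi$-matrix is upper-triangular with the correct diagonal eigencharacters. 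Once such a lift is produced, $c_{\ovr{r}}$ is by construction its extension class and hence lies in $L_\sigma(\chi_1,\chi_2)$.

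For the subspace statement, my approach is to realise $L_\sigma(\chi_1,\chi_2)$ as the image under reduction modulo $p$ of an additive group of crystalline extensions. Fixing one pair of lifts $\tilde\chi_1,\tilde\chi_2$, the set of crystalline extensions of $\tilde\chi_2$ by $\tilde\chi_1$ of Hodge type $\sigma$ is closed under Baer sum, and reduction modulo $p$ is $\FFpb$-linear; permitting $\tilde\chi_i$ to vary contributes only twists by crystalline characters with trivial reduction, which again preserves linearity. The main obstacle throughout is the upper-triangular lifting step: a priori a crystalline lift of a reducible residual representation need not itself be reducible, let alone with a prescribed diagonal, and extracting such a lift is exactly where the full integral $p$-adic Hodge theory of~\cite{gls15,wan17} is needed.
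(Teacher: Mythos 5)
The paper's own proof of this theorem is a single-sentence citation to \cite{gls15} (for $p>2$) and \cite{wan17} (for $p=2$); your proposal attempts to open up that citation and sketch the internal argument, which goes beyond what the paper itself does. The ``if'' direction and the identification of the upper-triangular lifting step as the crux are fine. However, the ``only if'' direction contains a genuine error: you claim that semisimplification of a crystalline lift $V$ of $\ovr{r}$ immediately produces a crystalline lift $V^\mathrm{ss}$ of $\ovr{r}^\mathrm{ss}$ of the same Hodge type. This fails precisely when $V\otimes\QQpb$ is irreducible, since then $V^\mathrm{ss}=V$ and its reduction is $\ovr{r}$, not $\ovr{r}^\mathrm{ss}$. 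Your own closing remark --- that a crystalline lift of a reducible residual representation need not itself be reducible --- is exactly the observation that defeats this step, so the sketch is internally inconsistent: establishing $W^\mathrm{cr}(\ovr{r})\subseteq W^\mathrm{cr}(\ovr{r}^\mathrm{ss})$ is a non-trivial output of the Breuil--Kisin analysis of \cite{gls15}, not a soft preliminary you can dispose of before invoking it. Similarly, the subspace argument is incomplete as written: for each fixed pair of crystalline lifts $(\tilde\chi_1,\tilde\chi_2)$ the image of the reduction map is indeed an $\FFpb$-subspace, but $L_\sigma(\chi_1,\chi_2)$ as defined is the union over all admissible pairs, and a union of subspaces need not be a subspace; \cite{gls15} has to show (and does, by a separate argument) that these images cohere into a single subspace. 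These are exactly the points the paper's one-line citation silently defers.
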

\begin{proof}
This is the main local result of \cite{gls15} for $p>2$ and follows from \cite{wan17} for $p=2$.
\end{proof}

The approach of \cite{ddr16}, \cite{cegm17} and \cite{ste22} is to make the set of extensions $L_\sigma(\chi_1, \chi_2)$ explicit using local class field theory and the Artin--Hasse exponential. Let us briefly recall this approach.

\subsection{Explicit basis elements} We will start by constructing an explicit basis of $H^1(G_K,\FFpb(\chi))$. Recall that we defined the characters $\omega_\tau$, for $\tau \in \Hom_{\FFp}(k,\FFpb)$, in \S\ref{subsec:notation}. Writing $\chi = \chi_1\chi_2^{-1}$, we have
\[
\chi = \psi \prod_{\tau \in \Hom_{\FFp}(k,\FFpb)} \omega_\tau^{n_\tau}
\]
for an unramified character $\psi$ and integers $n_\tau\in[1,p]$ for all $\tau$. We require that $n_\tau<p$ for at least one $\tau$ so that the $n_\tau$ are uniquely determined. Write $\Omega_{\tau,n}=\sum_{i=0}^{f-1} p^i n_{\tau \circ \varphi^i}$. Since $\omega_{\tau\circ \varphi^i} = \omega_\tau^p$, it follows that $\chi|_{I_K} = \omega_\tau^{\Omega_{\tau,n}}|_{I_K}$ for all $\tau$. Write $f'$ for the smallest integer $i>0$ such that $n_{\tau\circ\varphi^i}= n_\tau$ for all $\tau$. Note that $f'\mid f$ and let $f''= f/f'$. 

For any $j\ge 0$, write $W_j'$ for the set of integers $m\in \ZZ$ satisfying $\frac{jp(p^f-1)}{p-1}<m<\frac{(j+1)p(p^f-1)}{p-1}$, $p\nmid m$ and there exists a $\tau$ such that $m\equiv \Omega_{\tau,n} \bmod{(p^f-1)}$. Then $W_j'$ has cardinality $f'$ (see \cite[Rem.~3.4]{ste22}). Let $W':=\cup_{j=0}^{e-1} W_j'$ and $W:= W' \times \{0,\dotsc,f''-1\}.$ Then $W$ has cardinality $ef$. For any $m\in W'$, we have an embedding $\tau_m \in \Hom_{\FFp}(k,\FFpb)$ such that $m\equiv \Omega_{\tau_m,n} \bmod{(p^f-1)}$. To any $\alpha = (m,k)\in W$, we may attach an embedding $\tau_\alpha:=\tau_m\circ \varphi^{-kf'}$. Notice that $m\equiv \Omega_{\tau_\alpha,n}\bmod{(p^f-1)}$.

Write $\varpi:=\pi^{1/(p^f-1)}$ with $\pi^{1/(p^f-1)}$ as in \S\ref{subsec:notation}. We define $M:=L(\varpi)$, where $L/K$ is a fixed unramified extension of degree prime-to-$p$ such that $\psi|_{G_L}$ is trivial. Note that $\chi|_{G_M}$ is also trivial. Write $l$ for the residue field of $M$ and $\O_M$ for its ring of integers. Moreover, let $\lambda_{\tau, \psi}$ denote a basis element of the 1-dimensional space $(l\otimes_{k,\tau}\FFpb)^{\Gal(L/K) = \psi}$. Lastly, let $E(X):=\exp(\sum_{n\ge 0} X^{p^n}/p^n) \in \ZZ_p[[X]]$. For any $\alpha\in M$ with positive valuation, we define a map
\begin{align*}
\varepsilon_\alpha\colon l \otimes_{\FFp} \FFpb &\to \O_M^\times\otimes_{\ZZ}\FFpb \\
a \otimes b &\mapsto E([a]\alpha) \otimes b,
\end{align*}
where $[a]$ is a Teichm\"uller lift of $a\in l$. For any $\alpha=(m,k)\in W$, we define
\[
u_\alpha := \varepsilon_{\varpi^m}(\lambda_{\tau_\alpha, \psi})\in \O_M^\times\otimes_{\ZZ}\FFpb.
\]
Furthermore, if $\chi$ is trivial (respectively, cyclotomic) we refer to \cite[\S3.2.4]{ste22} for the definitions of $u_\mathrm{triv}\in M^\times \otimes_{\ZZ} \FFpb$ (respectively, $u_\mathrm{cyc}\in \O_M^\times \otimes_{\ZZ} \FFpb$). Since it follows from \cite[\S3.2.2]{ste22} that the isomorphisms of local class field theory give
\[
H^1(G_K, \FFpb(\chi))\cong \Hom_{\FFpb}\left(\left(M^\times\otimes_{\ZZ}\FFpb(\chi^{-1})\right)^{\Gal(M/K)}, \FFpb\right),
\]
the $\FFpb$-dual $c_\alpha$ of $u_\alpha$, for $\alpha\in W$, lies in $H^1(G_K,\FFpb(\chi))$ and similarly for $c_\mathrm{un}$ (respectively, $c_\mathrm{tr}$), the $\FFpb$-dual of $u_\mathrm{triv}$ (respectively, $u_\mathrm{cyc}$), if $\chi$ is trivial (respectively, cyclotomic).

\begin{prop}\label{prop:h-1-basis}
The set $\{c_\alpha\mid \alpha \in W\}$ together with $c_\mathrm{un}$ (respectively, $c_\mathrm{tr}$) if $\chi$ is trivial (respectively, cyclotomic) forms a basis of $H^1(G_K,\FFpb(\chi))$.
\end{prop}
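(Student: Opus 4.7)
The plan is to pass to the Pontryagin dual via the local class field theory isomorphism already displayed in the text. It suffices to show that the $u_\alpha$ for $\alpha \in W$, together with $u_\mathrm{triv}$ if $\chi$ is trivial (resp.\ $u_\mathrm{cyc}$ if $\chi$ is cyclotomic), form a basis of the finite-dimensional $\FFpb$-vector space
\[
V := \bigl(M^\times \otimes_\ZZ \FFpb(\chi^{-1})\bigr)^{\Gal(M/K)}.
\]
So the proof breaks into a dimension count and a linear independence argument, both of which proceed along the filtration on $M^\times$.

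For the dimension, I would invoke Tate's local Euler characteristic formula and local duality to get
\[
\dim H^1(G_K, \FFpb(\chi)) = [K:\QQp] + \dim H^0(G_K, \FFpb(\chi)) + \dim H^0(G_K, \FFpb(\chi^{-1}\chicyc)).
\]
Since $[K:\QQp]=ef$, this equals $ef$ in the generic case, and $ef+1$ in precisely the two listed exceptional cases. So the proposed set of vectors has the correct cardinality in every case.

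To show the $u_\alpha$ and (where relevant) $u_\mathrm{triv}$, $u_\mathrm{cyc}$ are linearly independent, I would exploit the filtration $M^\times \supset \O_M^\times \supset U_M^{(1)} \supset U_M^{(2)} \supset \cdots$ after tensoring with $\FFpb$. The torsion part $\mu_M$ dies, the cyclic quotient $M^\times/\O_M^\times$ contributes a single dimension that is picked out exactly by $u_\mathrm{triv}$ in the trivial case, and for each $m$ not divisible by $p$ the Artin--Hasse map $\varepsilon_{\varpi^m}$ embeds $l \otimes \FFpb$ as a complement to $U_M^{(m+1)}$ inside $U_M^{(m)}$. Taking $\chi^{-1}$-twisted $\Gal(M/K)$-invariants of each graded piece selects precisely one line, indexed by a choice of $\tau \in \Hom_{\FFp}(k,\FFpb)$ with $m \equiv \Omega_{\tau,n}\bmod(p^f-1)$; these are the conditions defining $W'$ and, after accounting for $\Gal(L/K)$-twisting via $\lambda_{\tau_\alpha,\psi}$, precisely the set $W$. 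Because elements in distinct graded pieces remain independent modulo the next step of the filtration, independence follows immediately, and matching the count from the previous paragraph gives a basis.

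The main technical obstacle is verifying that $\varepsilon_{\varpi^m}(\lambda_{\tau_\alpha,\psi})$ really does lie in the $\chi^{-1}$-eigenspace for the full action of $\Gal(M/K) = \Gal(L/K) \times \Gal(K(\varpi)/K)$: the $\Gal(L/K)$-action contributes the $\psi^{-1}$-part via the choice of $\lambda_{\tau_\alpha,\psi}$, while the action on $\varpi^m$ produces exactly the $\omega_{\tau_\alpha}^{-\Omega_{\tau_\alpha, n}}$-part, so the condition $m \equiv \Omega_{\tau_\alpha,n}\bmod(p^f-1)$ is precisely what is needed. The fact that $|W_j'| = f'$ and the Artin--Hasse-based decomposition of $U_M^{(1)} \otimes \FFpb$ are worked out in \cite[\S3]{ste22}, so in practice the proof amounts to assembling these statements and checking that nothing is double-counted when $\chi$ is trivial or cyclotomic.
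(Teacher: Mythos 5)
The paper proves this proposition by a one-line citation to \cite[Cor.~3.7]{ste22}, so there is no self-contained argument in the paper to compare against; your sketch is essentially an attempt to reconstruct what that reference establishes. Your dimension count via the local Euler characteristic formula together with Tate local duality is correct, and the overall strategy --- pass to the dual via local class field theory and exhibit independence through the unit filtration of $M^\times$ --- is the right one and is indeed what \cite[\S3]{ste22} does.

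That said, two steps are genuinely glossed over, and both are precisely where the technical content of \cite[\S3]{ste22} lies. First, you assert that ``the torsion part $\mu_M$ dies'' after $\otimes_\ZZ\FFpb$. This is true for the prime-to-$p$ torsion but false for $\mu_p$ when $\mu_p\subset M$, and that surviving $p$-torsion line is exactly the source of $u_\mathrm{cyc}$ in the cyclotomic case: it sits in $U_M^{(1)}$ but is killed by the $p$-th power map, so it is invisible to the graded-piece argument you describe. As written your filtration sketch never produces $u_\mathrm{cyc}$, so the cyclotomic exceptional case is not actually handled. Second, the graded pieces $U_M^{(m)}/U_M^{(m+1)}$ with $p\nmid m$ do not all contribute independently to $U_M^{(1)}\otimes_\ZZ\FFpb$: after quotienting by $p$-th powers only a bounded range of $m$ survives, since $u\mapsto u^p$ eventually identifies a graded piece with one higher up the filtration. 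This is precisely why the definition of $W_j'$ imposes $\frac{jp(p^f-1)}{p-1}<m<\frac{(j+1)p(p^f-1)}{p-1}$; your argument never invokes these bounds, so it would over-count. Engaging with both points is needed before the sketch closes.
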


\begin{proof} This is \cite[Cor.~3.7]{ste22}.\end{proof}

\subsection{Explicit sets of extensions} Let us now show how to use our explicit basis elements to get to an explicit definition of $L_\sigma(\chi_1,\chi_2)$. These are results from \cite{ddr16}, \cite{cegm17} and \cite{ste22}, slightly adapted to suit the case at hand.

\begin{defn}\label{defn:S-chi1-chi2-r} Let $\sigma= \sigma_{a,b}$ be a Serre weight. Then we define $\mathcal{S}(\chi_1,\chi_2, \sigma)$ to equal the set of pairs $(J,x)$ with $J\subset \Hom_{\FFp}(k,\FFpb)$ and $x \in \ZZ^{\Hom_{\FFp}(k,\FFpb)}$ with $x_\tau\in [0,e-1]$ for all $\tau$ such that
\[
\ovr{r}^\mathrm{ss}|_{I_K} \cong \begin{pmatrix}
\prod_{\tau\in J} \omega_\tau^{a_\tau + 1 + x_\tau}\prod_{\tau\notin J}\omega_\tau^{b_\tau + x_\tau} & 0 \\
0 & \prod_{\tau\notin J} \omega_\tau^{a_\tau+e-x_\tau}\prod_{\tau \in J} \omega_\tau^{b_\tau + e -1 -x_\tau}
\end{pmatrix}.
\]
\end{defn}

\begin{defn}\label{defn:order-on-S-chi1-chi2-sigma}
Suppose $(J,x)\in \mathcal{S}(\chi_1,\chi_2, \sigma)$. We define $s(J,x)\in\ZZ^{\Hom_{\FFp}(k,\FFpb)}$ as
\[
s(J,x) = \begin{cases} a_\tau - b_\tau + 1 + x_\tau &\text{if $\tau\in J$;} \\
x_\tau &\text{if $\tau\notin J$.}
\end{cases}
\]
We impose an ordering $\preceq$ on $\mathcal{S}(\chi_1,\chi_2, \sigma)$ by stipulating that, for all $(J,x), (J',x')\in \mathcal{S}(\chi_1,\chi_2, r)$,
\[
(J,x) \preceq (J',x') \;\text{if and only if}\; \Omega_{\tau,s(J',x')-s(J,x)}\in (p^f-1)\ZZ_{\ge 0}\text{ for all $\tau$.}
\]
\end{defn}

\begin{prop}\label{prop:max-s-t}
If $\mathcal{S}(\chi_1,\chi_2, \sigma)$ is non-empty, then it contains a unique maximal element $(J^\mathrm{max},x^\mathrm{max})$.
\end{prop}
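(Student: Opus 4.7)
The plan is to work primarily with the vector $s(J,x) \in \ZZ^{\Hom_{\FFp}(k,\FFpb)}$ introduced in Definition~\ref{defn:order-on-S-chi1-chi2-sigma}. First I would verify that $\preceq$ is a partial order. Reflexivity and transitivity follow immediately from the $\ZZ$-linearity of $\Omega_{\tau,-}$, and antisymmetry reduces to the injectivity of the map $v \mapsto (\Omega_{\tau,v})_\tau$, which is given by a non-singular $\ZZ$-linear transformation (essentially a ``cyclic Vandermonde'' in the powers of $p$). Since $\mathcal{S}(\chi_1,\chi_2,\sigma)$ is manifestly finite, maximal elements exist for free; the whole content of the proposition is uniqueness.

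The key structural observation is that the isomorphism $\ovr{r}^{\mathrm{ss}}|_{I_K} \cong \mathrm{diag}(\,\cdot\,,\,\cdot\,)$ in Definition~\ref{defn:S-chi1-chi2-r} pins down the character $\prod_\tau \omega_\tau^{s(J,x)_\tau + b_\tau}$ up to the symmetry of swapping the two diagonal entries; hence for every $\tau$ the residue $\Omega_{\tau,s(J,x)} \bmod (p^f-1)$ takes one of at most two possible values. In particular, any two elements of $\mathcal{S}(\chi_1,\chi_2,\sigma)$ in the same ``diagonal-ordering case'' satisfy $\Omega_{\tau,\, s(J_1,x_1) - s(J_2,x_2)} \in (p^f-1)\ZZ$ automatically, so the $\preceq$-comparison reduces to a sign condition.

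My strategy for uniqueness is to show that any two elements $(J_1,x_1),(J_2,x_2) \in \mathcal{S}(\chi_1,\chi_2,\sigma)$ admit a common $\preceq$-upper bound inside $\mathcal{S}(\chi_1,\chi_2,\sigma)$; combined with antisymmetry and finiteness this forces a unique maximal element. The natural candidate is the pair with coordinates
\[
s(J^\vee,x^\vee)_\tau := \max\!\bigl(s(J_1,x_1)_\tau,\ s(J_2,x_2)_\tau\bigr),
\]
where $\tau \in J^\vee$ is determined by which of the two admissible intervals $[0,e-1]$ or $[a_\tau-b_\tau+1,a_\tau-b_\tau+e]$ the max lands in. By construction $s(J^\vee,x^\vee) \ge s(J_i,x_i)$ coordinate-wise, so the range condition of Definition~\ref{defn:S-chi1-chi2-r} is satisfied; however, the global congruence $\Omega_{\tau,s(J^\vee,x^\vee)} \equiv \Omega_{\tau,s(J_1,x_1)} \pmod{p^f-1}$ may fail. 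To repair this, I would flip membership in $J^\vee$ at select indices $\tau$ (which slides $s^\vee_\tau$ between the two intervals, shifting $\Omega_{\tau,s^\vee}$ by an integer multiple of $a_\tau-b_\tau+1$), using the cyclic recursion
\[
p\,\Omega_{\tau\circ\varphi,\,v} \;=\; \Omega_{\tau,v} + v_\tau(p^f-1)
\]
to choose a consistent set of flips that restores the congruence.

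The hardest step will be showing that such adjustments can always be made while preserving $(J_i,x_i) \preceq (J^\vee,x^\vee)$ for both $i=1,2$, i.e., without undoing the gains of the initial coordinate-wise maximum. This amounts to a ``carry-propagation'' argument for base-$p$ expansions modulo $p^f-1$, and I expect it to be the technical heart of the proof. As a fallback, if the direct construction of a common upper bound turns out to be too delicate, I would resort to a lex-greedy argument: start from any element of $\mathcal{S}(\chi_1,\chi_2,\sigma)$ and iteratively replace it by a strict $\preceq$-successor until none exists, then argue by an exchange argument that the terminal element is independent of all choices made.
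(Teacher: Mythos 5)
The paper's own ``proof'' of this proposition is just a citation to \cite[Lem.~5.3.3]{gls15} and \cite[Prop.~6.5]{bs22}, so your from-scratch plan is necessarily a different route. The outline — treat $\mathcal{S}(\chi_1,\chi_2,\sigma)$ as a finite preordered set via the $s$-vectors, show that any two elements have a common $\preceq$-upper bound, and conclude uniqueness of the maximum — is in fact close in spirit to the argument inside the cited GLS lemma, so it is the right shape of plan.

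There are, however, two real gaps. First, your antisymmetry step is not quite correct as stated: injectivity of $v\mapsto(\Omega_{\tau,v})_\tau$ only gives $s(J,x)=s(J',x')$, not $(J,x)=(J',x')$. The assignment $(J,x)\mapsto s(J,x)$ fails to be injective precisely when $a_\tau-b_\tau\le e-2$ for some $\tau$: the two allowed intervals $[0,e-1]$ and $[a_\tau-b_\tau+1,a_\tau-b_\tau+e]$ then overlap, and one may toggle $\tau$ in or out of $J$ while adjusting $x_\tau$ to keep $s_\tau$ fixed, producing distinct elements of $\mathcal{S}(\chi_1,\chi_2,\sigma)$ that are $\preceq$-equivalent. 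So $\preceq$ is a priori only a preorder on pairs, and the argument has to be run at the level of $s$-vectors, with the maximal $(J,x)$ then pinned down by a separate canonical rule (the paper later uses $J=\{\tau:t_\tau<r_\tau\}$). Second, and more substantively, the ``repair'' of the coordinate-wise maximum $s^\vee$ is left entirely unargued, and it is exactly where the content of the proposition lives. Flipping membership of $\tau$ in $J^\vee$ shifts $s^\vee_\tau$ across a gap of size $a_\tau-b_\tau+1$, which can destroy the coordinate-wise inequalities $s^\vee_\tau\ge s_i(\tau)$ that motivated the candidate; one must exhibit a globally consistent family of flips that simultaneously restores $\Omega_{\tau,s^\vee-s_1}\equiv 0\pmod{p^f-1}$ for all $\tau$ and preserves domination of both $s_1$ and $s_2$. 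That base-$p$ carry argument is essentially \cite[Lem.~5.3.3]{gls15} itself, and the fallback lex-greedy route hides the same difficulty inside the unstated ``exchange argument.'' As written, the proposal is a reasonable sketch with a hole exactly at the nontrivial step.
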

\begin{proof}
This follows from \cite[Lem.~5.3.3]{gls15} -- see also \cite[Prop.~6.5]{bs22} and its proof.
\end{proof}

\begin{defn}\label{defn:J^AH}
Suppose $\sigma$ is a Serre weight and $\mathcal{S}(\chi_1,\chi_2,\sigma)$ is non-empty with maximal element $(J,x) = (J^\mathrm{max},x^\mathrm{max})$. Let $s = s(J,x)$ as in Defn.~\ref{defn:order-on-S-chi1-chi2-sigma} and $t_\tau = a_\tau - b_\tau + e - s_\tau$ for all $\tau$. For each $\tau\in\Hom_{\FFp}(k,\FFpb)$, we define
\[
\mathcal{I}_\tau:=\begin{cases}
[0,s_\tau - 1] &\text{ if }\tau\notin J; \\
\{t_\tau\} \cup [r_\tau, s_\tau - 1] &\text{ if }\tau\in J.
\end{cases}
\]
Moreover, for each $\tau\in\Hom_{\FFp}(k,\FFpb)$, we define the constants
\[
\xi_\tau:= (p^f-1)s_\tau + \Omega_{\tau,s-t}.
\]
We let $J_\sigma^\mathrm{AH}(\chi_1,\chi_2)$ denote the subset of all $\alpha = (m,k) \in W$ such that there exist $\tau\in \Hom_{\FFp}(k,\FFpb)$, $d\in \mathcal{I}_\tau$ and $j\ge 0$ such that
\begin{enumerate}
\item $p^jm = \xi_\tau - d(p^f-1)$ and
\item $\tau_\alpha = \tau \circ \varphi^j$.
\end{enumerate}
\end{defn}

\begin{rem}\label{rem:only-first-eqn-in-J^AH}
We note that $s_\tau = s_{\tau\circ\varphi^{if'}}$ for any $\tau$ and $0\le i <f''$; hence, similarly for $t$ and $\xi$. Therefore, we see that $(m,k)\in J_\sigma^\mathrm{AH}(\chi_1,\chi_2)$ implies $(m,i)\in J_\sigma^\mathrm{AH}(\chi_1,\chi_2)$ for all $0\le i < f''$.
\end{rem}

\begin{defn}\label{defn:L^AH}
Let $\sigma = \sigma_{a,b}$ be a Serre weight. We define $L_\sigma^\mathrm{AH}(\chi_1,\chi_2)$ to be the span of
\[
\{c_\alpha\mid \alpha \in J_\sigma^\mathrm{AH}(\chi_1,\chi_2)\}
\]
together with $c_\mathrm{un}$ (respectively, $c_\mathrm{tr}$) if $\chi$ is trivial (respectively, if $\chi$ is cyclotomic, $\chi_2$ is unramified and $r_\tau =p$ for all $\tau$).
\end{defn}

This explicit definition of the set of extensions allows us to give an explicit definition of the set of Serre weights associated to $\ovr{r}$ in the non-semisimple case. The following definition is first found in \cite{ddr16} for $K/\QQp$ unramified and in \cite{ste22} for general $K/\QQp$.

\begin{defn}\label{defn:w-exp-reducible}
Suppose that $\ovr{r}\sim\begin{pmatrix} \chi_1 & c_{\ovr{r}} \\ 0 & \chi_2 \end{pmatrix}$. We define an explicit set of associated Serre weights $W^\mathrm{exp}(\ovr{r})$ as follows: $\sigma \in W^\mathrm{exp}(\ovr{r})$ if and only if 
\begin{enumerate}
\item $\sigma\in W^\mathrm{exp}(\ovr{r}^\mathrm{ss})$ and
\item $c_{\ovr{r}} \in L^\mathrm{AH}_\sigma(\chi_1,\chi_2)$.
\end{enumerate}
\end{defn}

The following theorem shows that we have indeed found an equivalent explicit description of the set of weights associated to $\ovr{r}$ when $\ovr{r}$ is reducible. This result is due to \cite{cegm17} in the unramified case and \cite{ste22} in the general case.

\begin{thm}\label{thm:L=L^AH}
Suppose that $\sigma \in W^\mathrm{exp}(\ovr{r}^\mathrm{ss})$. Then $L^\mathrm{AH}_\sigma(\chi_1,\chi_2) = L_\sigma(\chi_1,\chi_2)$.
\end{thm}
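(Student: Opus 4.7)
The plan is to prove both inclusions by parametrizing all upper-triangular crystalline lifts of $\ovr{r}$ of Hodge type $\sigma$ through Breuil--Kisin modules (or filtered $\varphi$-modules in the unramified case) and tracking the resulting extension classes through the local class field theory isomorphism from \cite[\S3.2.2]{ste22}. The argument closely follows \cite{cegm17} in the unramified case and \cite{ste22} in general, so my plan is really to organise what is done there.

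First I would establish $L_\sigma^\mathrm{AH}(\chi_1,\chi_2) \subseteq L_\sigma(\chi_1,\chi_2)$ by explicit construction. Fix the maximal pair $(J^\mathrm{max},x^\mathrm{max})$ of Prop.~\ref{prop:max-s-t} and the associated crystalline lifts $\tilde{\chi}_1,\tilde{\chi}_2$. For each $\alpha=(m,k)\in J_\sigma^\mathrm{AH}(\chi_1,\chi_2)$, the defining identity $p^j m = \xi_\tau - d(p^f - 1)$ with $d\in\mathcal{I}_\tau$ is exactly the congruence needed to build a crystalline upper-triangular extension of $\tilde{\chi}_2$ by $\tilde{\chi}_1$ of Hodge type $\sigma$ whose mod-$p$ reduction is $c_\alpha$. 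The Artin--Hasse exponential acts as the bridge: it converts the Hodge--Tate weight conditions on the lift into the valuation-theoretic congruences appearing in Defn.~\ref{defn:J^AH}. The auxiliary basis classes $c_\mathrm{un}$ and $c_\mathrm{tr}$ are dealt with separately: they correspond respectively to unramified twists and to a Kummer-type contribution that arises only when $\chi$ is trivial or cyclotomic (with the additional side conditions recorded in Defn.~\ref{defn:L^AH}).

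For the reverse inclusion I would take an arbitrary upper-triangular crystalline lift of Hodge type $\sigma$, expand its cohomology class in the basis of Prop.~\ref{prop:h-1-basis}, and then argue that any coefficient $c_\alpha$ with $\alpha\notin J_\sigma^\mathrm{AH}(\chi_1,\chi_2)$ must vanish. This is where Frobenius-divisibility of the Kisin module (coupled with weak admissibility and the prescribed filtration of Hodge type $\sigma$) does the work: it forces the only surviving $\alpha$ to satisfy the exact combinatorial condition defining $J_\sigma^\mathrm{AH}$. The shift by $p^j$ in the defining relation is the visible trace of the Frobenius on the Kisin module, while the range $d\in\mathcal{I}_\tau$ reflects the permissible filtration jumps.

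The main obstacle, and the technical heart of the argument in \cite{ste22}, is precisely this matching of the Frobenius/filtration constraints on the Kisin module side with the arithmetic combinatorics of $\mathcal{I}_\tau$ and $\xi_\tau$ on the Galois-cohomology side. One must also carefully handle the degeneracy that occurs when $\chi$ is trivial or cyclotomic, since then the auxiliary classes $c_\mathrm{un}$ or $c_\mathrm{tr}$ interact with the boundary maps in local class field theory in a subtle way, and the side conditions (e.g.\ $\chi_2$ unramified and $r_\tau=p$ for all $\tau$ in the cyclotomic case) must be verified to correspond precisely with the existence of suitable crystalline peu-ramifi\'ee extensions.
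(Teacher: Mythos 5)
The paper's proof is purely citational: it invokes Thm.~\ref{thm:GLS-W^cr=W^exp} together with \cite[Thm.~4.16]{ste22}, the latter being exactly the assertion that the Artin--Hasse span equals the crystalline-lift span. Your sketch correctly reconstructs the strategy behind that cited result (as you yourself note, following \cite{cegm17} for $e=1$ and \cite{ste22} in general), so you are taking essentially the same route, merely unpacking the content that the citation encapsulates.
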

\begin{proof}
This follows immediately from Thm.~\ref{thm:GLS-W^cr=W^exp} and \cite[Thm.~4.16]{ste22}.
\end{proof}

\begin{cor}\label{cor:W^cr=W^exp-nonsemisimple}
We have
\[
W^\mathrm{exp}(\ovr{r}) = W^\mathrm{cr}(\ovr{r}).
\]
\end{cor}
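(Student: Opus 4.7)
The plan is to split the argument according to whether $\ovr{r}$ is irreducible or reducible, since the paper defines $W^\mathrm{exp}(\ovr{r})$ by different prescriptions in each case (Defn.~\ref{defn:w-exp-semisimple} versus Defn.~\ref{defn:w-exp-reducible}).

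If $\ovr{r}$ is irreducible then it is automatically semisimple, so $W^\mathrm{exp}(\ovr{r})$ is the set prescribed by Defn.~\ref{defn:w-exp-semisimple}(1), and the required equality with $W^\mathrm{cr}(\ovr{r})$ is precisely the content of Thm.~\ref{thm:W^cr=W^exp-semisimple}. No further work is needed in this subcase.

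If instead $\ovr{r}$ is reducible, I would chain together the three black-box results already recorded in the excerpt. Fixing $\ovr{r}\sim\begin{pmatrix}\chi_1 & c_{\ovr{r}}\\ 0 & \chi_2\end{pmatrix}$, Thm.~\ref{thm:GLS-W^cr=W^exp} translates $\sigma\in W^\mathrm{cr}(\ovr{r})$ into the conjunction of $\sigma\in W^\mathrm{exp}(\ovr{r}^\mathrm{ss})$ and $c_{\ovr{r}}\in L_\sigma(\chi_1,\chi_2)$. Under the first of these conditions, Thm.~\ref{thm:L=L^AH} identifies $L_\sigma(\chi_1,\chi_2)$ with $L^\mathrm{AH}_\sigma(\chi_1,\chi_2)$, so the combined conditions are exactly the criterion of Defn.~\ref{defn:w-exp-reducible}, namely $\sigma\in W^\mathrm{exp}(\ovr{r})$. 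Since every step in this chain is a biconditional, this gives the corollary. I would also note that the semisimple-reducible situation ($c_{\ovr{r}}=0$) is consistent across the two definitions: there the second condition of Defn.~\ref{defn:w-exp-reducible} is automatic because $L^\mathrm{AH}_\sigma(\chi_1,\chi_2)$ is a subspace, so it collapses to membership in $W^\mathrm{exp}(\ovr{r}^\mathrm{ss})$, matching Defn.~\ref{defn:w-exp-semisimple}(2).

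Because each input is already cited in the excerpt, there is no genuine obstacle; the proof is essentially bookkeeping. The only subtlety worth flagging is that Thm.~\ref{thm:L=L^AH} is stated only for $\sigma\in W^\mathrm{exp}(\ovr{r}^\mathrm{ss})$, but this restriction is precisely the one imposed by condition (1) of Thm.~\ref{thm:GLS-W^cr=W^exp}, so the substitution of $L^\mathrm{AH}_\sigma$ for $L_\sigma$ is always legitimate in the regime where it is invoked, and no gap is introduced.
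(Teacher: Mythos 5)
Your argument for the reducible case---chaining Thm.~\ref{thm:GLS-W^cr=W^exp} with Thm.~\ref{thm:L=L^AH} (valid precisely in the regime imposed by condition (1)) and comparing against Defn.~\ref{defn:w-exp-reducible}---is exactly the paper's proof, and your remark about the subtlety in the scope of Thm.~\ref{thm:L=L^AH} correctly addresses the only potential gap. The irreducible subcase you open with is superfluous, since the section carries a standing assumption that $\ovr{r}$ is reducible (so the corollary is stated only for that case and $W^\mathrm{exp}(\ovr{r})$ is the set of Defn.~\ref{defn:w-exp-reducible}), but including it does no harm.
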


\begin{proof}
Immediate consequence of Thm.~\ref{thm:L=L^AH}, Thm.~\ref{thm:GLS-W^cr=W^exp} and Thm.~\ref{thm:W^cr=W^exp-semisimple}.
\end{proof}

\section{The main result}

In this section will state the main local result of this paper. Recall that $\ovr{r}\colon G_K\to \GL_2(\FFpb)$ is a continuous reducible representation of the form
\[
\ovr{r}\sim \begin{pmatrix}\chi_1 & * \\ 0 & \chi_2 \end{pmatrix}
\]
for $\chi_1, \chi_2\colon G_K\to \FFpb^\times$ continuous characters. Denote the quotient by $\chi = \chi_1\chi_2^{-1}$ and write $\chi = \psi\prod_{\tau\in\Hom_{\FFp}(k,\FFpb)}\omega_\tau^{n_\tau}$ for $\psi$ an unramified character and $n_\tau \in [1,p]$ for all $\tau$ with at least one $n_\tau<p$. Note that this uniquely defines $n\in \ZZ^{\Hom_{\FFp}(k,\FFpb)}$.

\begin{hypo}[Generic Hypothesis]\label{hypo:generic}
We will say that $\ovr{r}$ (or $\chi$) is weakly generic if $n_\tau\in[e,p-e]$ for all $\tau$. We will say that $\ovr{r}$ (or $\chi$) is strongly generic if $n_\tau\in[e,p-1-e]$ for all $\tau$.
\end{hypo}

Note that weak genericity implies that $e<p/2$ if $p$ is odd and $e=1$ if $p=2$, whereas strong genericity implies that $e\le(p-1)/2$ (hence, $p>2$).

\begin{lem}\label{lem:m-tau-j}
Suppose $\chi$ is weakly generic. For $\tau\in \Hom_{\FFp}(k,\FFpb)$ and $0\le j <e$, write
\[
m_{\tau,j}:= \Omega_{n,\tau} + j(p^f-1).
\]
Then $W_j' = \left\{m_{\tau,j}\mid \tau\in\Hom_{\FFp}(k,\FFpb)\right\}$.
\end{lem}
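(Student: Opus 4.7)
The plan is to establish both inclusions $\{m_{\tau,j}\mid \tau\}\subseteq W_j'$ and $W_j'\subseteq \{m_{\tau,j}\mid \tau\}$ by a direct bookkeeping argument from the definition of $W_j'$. The only substantive input is that weak genericity, i.e.\ $n_\tau\in[e,p-e]$, gives the bound
\[
e\cdot\frac{p^f-1}{p-1}\;\le\; \Omega_{\tau,n}\;\le\; (p-e)\cdot\frac{p^f-1}{p-1}
\]
via $\Omega_{\tau,n}=\sum_{i=0}^{f-1}p^i n_{\tau\circ\varphi^i}$ and $\sum_{i=0}^{f-1}p^i=\frac{p^f-1}{p-1}$. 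Everything else is a calculation.

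For the forward inclusion, I would fix $\tau$ and verify the three conditions defining $W_j'$ for $m=m_{\tau,j}$. The congruence $m\equiv \Omega_{\tau,n}\bmod(p^f-1)$ is immediate from the formula. For the range condition, writing $m_{\tau,j}=\Omega_{\tau,n}+j(p^f-1)$ converts $\frac{jp(p^f-1)}{p-1}<m_{\tau,j}<\frac{(j+1)p(p^f-1)}{p-1}$ into
\[
\frac{j(p^f-1)}{p-1}\;<\;\Omega_{\tau,n}\;<\;\frac{(p+j)(p^f-1)}{p-1},
\]
both of which follow from the displayed bounds, using $j<e$ for the lower inequality and $p-e<p+j$ (valid since $e\ge 1$ and $j\ge 0$) for the upper. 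For $p\nmid m_{\tau,j}$, reduction mod $p$ yields $m_{\tau,j}\equiv n_\tau-j\pmod{p}$, and weak genericity with $0\le j\le e-1$ forces $n_\tau-j\in[1,p-e]\subseteq[1,p-1]$.

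For the reverse inclusion, I would take $m\in W_j'$ and pick $\tau$ with $m\equiv \Omega_{\tau,n}\bmod(p^f-1)$, writing $m=\Omega_{\tau,n}+k(p^f-1)$ for some integer $k$. Combining the range on $m$ with the bounds on $\Omega_{\tau,n}$ sandwiches $k$ strictly:
\[
\frac{(j-1)p+e}{p-1}\;<\;k\;<\;\frac{(j+1)p-e}{p-1}.
\]
A direct check shows $\frac{(j-1)p+e}{p-1}\ge j-1$ (equivalent to $e+j\ge 1$, automatic from $e\ge 1$) and $\frac{(j+1)p-e}{p-1}\le j+1$ (equivalent to $e\ge j+1$, automatic since $j\le e-1$). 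Hence $j-1<k<j+1$, forcing $k=j$ and $m=m_{\tau,j}$.

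There is no real conceptual obstacle; the argument is a careful accounting of inequalities, and the hypothesis $n_\tau\in[e,p-e]$ is precisely what is needed so that the $e$ windows $W_0',\dots,W_{e-1}'$ are described cleanly as the integer translates $\Omega_{\tau,n}+j(p^f-1)$ for $0\le j<e$.
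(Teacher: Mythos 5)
Your proof is correct. The paper itself delegates the proof to \cite[Lem.~7.4.2]{ste20}, but the argument you give is the natural direct verification: the bound $\frac{e(p^f-1)}{p-1}\le\Omega_{\tau,n}\le\frac{(p-e)(p^f-1)}{p-1}$ from weak genericity is exactly what is needed for the forward range check and to sandwich $k$ strictly between $j-1$ and $j+1$ in the reverse direction, and the computation $m_{\tau,j}\equiv n_\tau-j\pmod p$ together with $n_\tau-j\in[1,p-e]$ handles the $p\nmid m$ condition cleanly. Nothing is missing.
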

\begin{proof}
This is \cite[Lem.~7.4.2]{ste20}.
\end{proof}

\begin{defn}\label{defn:dim-vec-l} Suppose $\sigma = \sigma_{a,b}$ is a Serre weight and $\mathcal{S}(\chi_1,\chi_2,\sigma)$ is non-empty with maximal element $(J,x) = (J^\mathrm{max},x^\mathrm{max})$. We define the dimension vector $\ell = (\ell_\tau)_\tau \in [0,e]^{\Hom_{\FFp}(k,\FFpb)}$ as follows: for $\tau \in \Hom_{\FFp}(k,\FFpb)$, we define
\[
\ell_\tau:=
\begin{cases}
x_\tau		&\text{if $\tau \circ \varphi^{-1}\notin J$;} \\
x_\tau+1	&\text{if $\tau \circ \varphi^{-1}\in J$.}
\end{cases}
\]
\end{defn}

\begin{thm}\label{thm:gen-conj}
Suppose $\sigma$ is a Serre weight. Let $\tau \in \Hom_{\FFp}(k,\FFpb)$, $0\le j <e$ and $0\le k <f''$. If $\chi=\chi_1\chi_2^{-1}$ is weakly generic, then
\[
(m_{\tau,j},k)\in J_\sigma^\mathrm{AH}(\chi_1,\chi_2)\;\text{if and only if}\; j<\ell_\tau.
\]
\end{thm}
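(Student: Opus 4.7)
The plan is to unwind the defining condition of $J_\sigma^{\mathrm{AH}}(\chi_1,\chi_2)$. By Remark~\ref{rem:only-first-eqn-in-J^AH}, membership of $(m_{\tau,j},k)$ does not depend on $k$, so we may take $k=0$. The condition $\tau_\alpha = \tau'\circ\varphi^{j'}$ then forces $\tau' = \tau\circ\varphi^{-j'}$, and membership amounts to the existence of some $j'\ge 0$ and $d\in\mathcal{I}_{\tau\circ\varphi^{-j'}}$ with
\[
p^{j'} m_{\tau,j} = \xi_{\tau\circ\varphi^{-j'}} - d(p^f-1).
\]

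The first step is to establish the key identity $\Omega_{\tau,n-s+t}\equiv 0\pmod{p^f-1}$. Writing out the ratio of the two diagonal characters in Defn.~\ref{defn:S-chi1-chi2-r} and comparing with $\chi|_{I_K}=\prod_\tau\omega_\tau^{n_\tau}$ gives $\prod_\tau \omega_\tau^{s_\tau-t_\tau}=\prod_\tau \omega_\tau^{n_\tau}$ as characters of $I_K$, which via the relation $\omega_{\tau\circ\varphi}=\omega_\tau^p$ translates into the asserted divisibility. In particular, one may set $A_\tau:=\Omega_{\tau,n-s+t}/(p^f-1)\in\ZZ$.

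Next, using the identity
\[
p^{j'}\Omega_{\tau,n} = \Omega_{\tau\circ\varphi^{-j'},n} + (p^f-1)\sum_{i=1}^{j'} p^{j'-i} n_{\tau\circ\varphi^{-i}},
\]
proved by induction from $p\,\Omega_{\tau,n}=\Omega_{\tau\circ\varphi^{-1},n}+(p^f-1)n_{\tau\circ\varphi^{-1}}$, together with $m_{\tau,j}=\Omega_{\tau,n}+j(p^f-1)$, the defining equation rearranges to
\[
d = s_{\tau\circ\varphi^{-j'}} - A_{\tau\circ\varphi^{-j'}} - p^{j'}j - \sum_{i=1}^{j'} p^{j'-i} n_{\tau\circ\varphi^{-i}}.
\]
The weakly generic bounds $n_\tau\in[e,p-e]$, combined with $x_\tau\in[0,e-1]$ and the shape of $\mathcal{I}_{\tau\circ\varphi^{-j'}}$, should force the admissible $j'$ to be unique; the distinguishing criterion is precisely whether $\tau\circ\varphi^{-1}\in J$, mirroring the split in the definition of $\ell_\tau$. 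In each of these two cases, one verifies that the value of $d$ above lies in $\mathcal{I}_{\tau\circ\varphi^{-j'}}$ if and only if $j<\ell_\tau$, which reduces to an explicit computation of $A_{\tau\circ\varphi^{-j'}}$ (a finite combinatorial exercise controlled by the weakly generic size bound on $\Omega_{\tau',n-s+t}$).

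The main obstacle will be proving uniqueness of the admissible $j'$. One must rule out competing $j'$'s using the asymmetric interval $[e,p-e]$: the lower bound $n_\tau\ge e$ prevents $p^{j'}m_{\tau,j}$ from being unexpectedly small, while the upper bound $n_\tau\le p-e$ prevents $\xi_{\tau\circ\varphi^{-j'}}$ from being unexpectedly large. This is precisely the step where the sharpness of weak genericity becomes essential; once uniqueness of $j'$ is in hand, the rest of the proof is a bookkeeping exercise comparing the value of $d$ to the intervals $\mathcal{I}_{\tau\circ\varphi^{-j'}}$ and to $\ell_\tau$.
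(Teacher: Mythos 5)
Your algebraic set-up is correct and worth crediting: the reduction to solving $p^{j'}m_{\tau,j}=\xi_{\tau\circ\varphi^{-j'}}-d(p^f-1)$, the identity $\Omega_{\tau,n-s+t}\equiv 0\pmod{p^f-1}$ (which follows correctly from comparing $\chi|_{I_K}=\prod\omega_\tau^{n_\tau}$ with $\prod\omega_\tau^{s_\tau-t_\tau}$), the telescoping formula for $p^{j'}\Omega_{\tau,n}$, and the resulting closed form
\[
d=s_{\tau\circ\varphi^{-j'}}-A_{\tau\circ\varphi^{-j'}}-p^{j'}j-\sum_{i=1}^{j'}p^{j'-i}n_{\tau\circ\varphi^{-i}}
\]
are all verified. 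This is genuinely a different route from the paper, which argues by a counting bijection $\iota$ between $\bigcup_\tau\{\tau\}\times\mathcal{I}_\tau$ and $J_\sigma^{\mathrm{AH}}$ and exploits $|J_\sigma^{\mathrm{AH}}|=\sum_\tau|\mathcal{I}_\tau|$ from \cite{ste22}, rather than computing $d$ explicitly.

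However, the proposal stops exactly where the proof gets hard, and the gaps are not fillable in one line. First, you assert that weak genericity "should force" a unique admissible $j'$, but this is the entire analytic content of the theorem: the paper devotes Propositions~\ref{prop:comb-prop2} and~\ref{prop:comb-prop3} to proving, via the explicit solution $r(J,c)$ of Definition~\ref{defn:r-J-c}, that $v_p(\xi_\kappa-c(p^f-1))=0$ for $c\neq t_\kappa$ and $=1$ for $c=t_\kappa$ when $t_\kappa<r_\kappa$ -- that is, that $j'\in\{0,1\}$ and which one occurs. Your formula for $d$ does not visibly bound $j'$; showing that $j'\ge 2$ forces $d$ out of the interval requires the same delicate estimates on $s_\tau-t_\tau$ (equivalently on $r=r(J,x)$) that Proposition~\ref{prop:generic-implies-unique-weight} supplies, and you defer this as "a finite combinatorial exercise." Second, computing $A_{\tau'}=\Omega_{\tau',n-s+t}/(p^f-1)$ is itself nontrivial: the entries $(n-s+t)_\kappa$ range over roughly $[-(p-1),2p-1]$, and pinning $A_{\tau'}$ down to a usable form is precisely where the casework on $\tau\in J$ versus $\tau\notin J$ and the maximality of $(J,x)$ enter; none of that is sketched. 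Third, the proposal does not address the cyclotomic exceptional case ($r_\tau=p$, $n_\tau=e$, $t_\tau=0$ for all $\tau$, where $J_\sigma^{\mathrm{AH}}=W$), which the paper's argument must treat separately because the valuation claims of Propositions~\ref{prop:comb-prop2}--\ref{prop:comb-prop3} fail there. Finally, the "if and only if $j<\ell_\tau$" conclusion in the paper needs the monotonicity statement of Proposition~\ref{prop:comb-prop4}; in your direct approach you would instead need to verify $d\in\mathcal{I}_{\tau'}$ holds for an initial segment of $j$'s, which again is just promised, not shown. In short: the skeleton is a legitimate alternative strategy, but everything from the displayed formula for $d$ onward is a placeholder for the actual combinatorics, and that combinatorics is not routine.
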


We will prove this theorem in \S\ref{sec:proof-of-conj}. Now we will show how to use Theorem \ref{thm:gen-conj} to obtain a decomposition of the set of Serre weights. 

\begin{defn}
We let $\mathcal{W}$ denote the set of tuples $(w_\tau)_\tau\in \ZZ^{\Hom_{\FFp}(k,\FFpb)}$ such that $0\le w_\tau \le e$ for all $\tau$. We impose $\mathcal{W}$ with the usual product partial ordering, i.e. we define $\le$ on $\mathcal{W}$ by
\[
(w_\tau)_\tau \le (w_\tau ')_\tau\text{ if and only if } w_\tau \le w_\tau'\text{ for all $\tau$.}
\]
\end{defn}

\begin{defn}
For $w = (w_\tau)_\tau \in \mathcal{W}$, write
\[
L_w:=\mathrm{Span}(\{c_\alpha\mid \alpha = (m_{\tau,j},k)\text{ with }0\le j < e - w_\tau\})
\]
unless $\chi$ is trivial, in which case we additionally include $c_\mathrm{un}$ in this span. We define a subset of Serre weights $P_w$ via
\[
\sigma \in P_w :\iff \sigma\in W^\mathrm{exp}(\ovr{r}^\mathrm{ss})\text{ and } L_\sigma(\chi_1,\chi_2) = L_w.
\]
\end{defn}

\begin{prop}\label{prop:P-w-size-strong-case}
Suppose $\ovr{r}$ is strongly generic and $w = (w_\tau)_\tau \in \mathcal{W}$. Then we have that $|P_w| = 2^{f-\delta_w}$, where 
\[
\delta_w = |\{\tau\in\Hom_{\FFp}(k,\FFpb)\mid w_\tau = 0\text{ or }w_\tau = e\}|,
\]
unless we are in one of the following exceptional cases. If $\chi$ is cyclotomic and $w_\tau = 0$ for all $\tau$ or $\chi^{-1}$ is cyclotomic and $w_\tau = e$ for all $\tau$, then $|P_w| = 2$.
\end{prop}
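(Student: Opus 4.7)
My plan is to exploit Theorem \ref{thm:gen-conj} to reduce the identification of $P_w$ to a combinatorial count of admissible $\preceq$-maximal pairs $(J, x) \in \mathcal{S}(\chi_1, \chi_2, \sigma)$, and then to treat the cyclotomic lift-ambiguity as an isolated exception. First I would observe that in the strongly generic case the extra basis element $c_\mathrm{tr}$ is never adjoined to $L^\mathrm{AH}_\sigma$, since the inclusion condition in Defn.~\ref{defn:L^AH} is incompatible with the bound $n_\tau \le p - 1 - e < p$. Combined with Theorems \ref{thm:L=L^AH} and \ref{thm:gen-conj}, this shows that $L_\sigma(\chi_1, \chi_2)$ is spanned by $\{c_{(m_{\tau, j}, k)} : j < \ell_\tau(\sigma),\, 0 \le k < f''\}$, together with $c_\mathrm{un}$ exactly when $\chi$ is trivial. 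Matching this against the definition of $L_w$, one deduces that $\sigma \in P_w$ if and only if $\sigma \in W^\mathrm{exp}(\ovr{r}^\mathrm{ss})$ and $\ell_\tau(\sigma) = e - w_\tau$ for every $\tau$.

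Next I would count the pairs $(J, x)$ that could realize $\ell_\tau(J, x) = e - w_\tau$ for all $\tau$. By Defn.~\ref{defn:dim-vec-l} the options at each $\tau$ are: exactly one when $w_\tau = 0$ (take $\tau \circ \varphi^{-1} \in J$ and $x_\tau = e - 1$), exactly one when $w_\tau = e$ (take $\tau \circ \varphi^{-1} \notin J$ and $x_\tau = 0$), and exactly two when $0 < w_\tau < e$. Multiplying over $\tau$ gives $2^{f - \delta_w}$ candidate pairs. To convert this into a count of Serre weights, one checks, using the slack afforded by strong genericity $n_\tau \in [e, p - 1 - e]$, that each such $(J, x)$ determines a unique weight $\sigma_{a, b} \in W^\mathrm{exp}(\ovr{r}^\mathrm{ss})$ via the equation in Defn.~\ref{defn:S-chi1-chi2-r}, that distinct pairs give distinct weights, and that each pair is indeed the $\preceq$-maximal element of $\mathcal{S}(\chi_1, \chi_2, \sigma_{a, b})$.

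The exceptional cases arise from a failure of lift-uniqueness in the preceding step. When $\chi$ is cyclotomic and $w_\tau = 0$ for all $\tau$, the only admissible candidate is $(J, x) = (\Hom_{\FFp}(k, \FFpb), (e - 1, \ldots, e - 1))$, and the two diagonal character equations in Defn.~\ref{defn:S-chi1-chi2-r} collapse to the single congruence
\[
\sum_{i = 0}^{f - 1} (a_{\tau_0 \circ \varphi^i} - b_{\tau_0 \circ \varphi^i})\, p^i \equiv 0 \pmod{p^f - 1},
\]
because the cyclotomic exponent $e \cdot \tfrac{p^f - 1}{p - 1}$ cancels exactly against the $x_\tau = e - 1$ shift. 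Subject to the Serre weight constraint $a_\tau - b_\tau \in [0, p - 1]$, this admits precisely the two solutions $a_\tau - b_\tau = 0$ for all $\tau$ and $a_\tau - b_\tau = p - 1$ for all $\tau$, producing two distinct Serre weights in $P_w$. The case where $\chi^{-1}$ is cyclotomic and $w_\tau = e$ for all $\tau$ is handled symmetrically. The main obstacle in this overall approach is the maximality check for $\preceq$: strong genericity is precisely what prevents the residues $a_\tau - b_\tau \bmod (p^f - 1)$ from wrapping around in a way that would create additional $\preceq$-maximal candidates beyond those counted above.
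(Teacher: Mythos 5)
Your overall strategy tracks the paper's quite closely: reduce via Theorems \ref{thm:L=L^AH} and \ref{thm:gen-conj} to the condition $\ell_\tau(\sigma) = e - w_\tau$, count the candidate pairs $(J,x)$, and then verify that this count transfers to Serre weights. The counting of pairs and the analysis of the exceptional cases via the congruence $\sum_i (a_{\tau_0\circ\varphi^i} - b_{\tau_0\circ\varphi^i})p^i \equiv 0 \pmod{p^f-1}$ are in the same spirit as the paper's argument, which produces the explicit weight $\sigma(J,x)$ and the exceptional weight $\sigma'(J,x)$.

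There is, however, a genuine error in your opening step. You assert that in the strongly generic case $c_{\mathrm{tr}}$ is never adjoined to $L^{\mathrm{AH}}_\sigma$ ``since the inclusion condition in Defn.~\ref{defn:L^AH} is incompatible with the bound $n_\tau \le p-1-e < p$.'' But the condition for adjoining $c_{\mathrm{tr}}$ in Defn.~\ref{defn:L^AH} is that $\chi$ is cyclotomic, $\chi_2$ is unramified, \emph{and $r_\tau = p$ for all $\tau$}, where $r_\tau = a_\tau - b_\tau + 1$ is a datum of the Serre weight $\sigma$, not of $\chi$. You have confused $n_\tau$ with $r_\tau$. Strong genericity is perfectly compatible with $\chi$ being cyclotomic (this is exactly $n_\tau = e$ for all $\tau$, which lies in $[e,p-1-e]$), and then the exceptional weight with $a_\tau - b_\tau = p-1$ for all $\tau$ does have $r_\tau = p$; so the $c_{\mathrm{tr}}$-clause is \emph{not} vacuous in the regime you are considering. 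Without correcting this, the identification of $L_\sigma(\chi_1,\chi_2)$ with $L_w$ via $\ell$ does not hold for that exceptional weight, and the claim that both solutions of the congruence land in $P_w$ needs a separate justification that takes the $c_{\mathrm{tr}}$ term into account.

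Secondly, the step ``one checks, using the slack afforded by strong genericity, that each such $(J,x)$ determines a unique weight\dots{} that distinct pairs give distinct weights, and that each pair is indeed the $\preceq$-maximal element'' glosses over where the actual combinatorial work lies. The paper does this via Prop.~\ref{prop:generic-implies-unique-weight}: it computes explicit formulae for $b_\tau$ and $a_\tau - b_\tau$ from $r(J,x)$, verifies $a_\tau - b_\tau < p-1$ under strong genericity (so that the ambiguity of Prop.~\ref{prop:generic-implies-unique-weight} cannot arise and maximality of $(J,x)$ is automatic), and then proves injectivity by comparing the $b_\tau$-formulae. You would need to make this part of your argument precise — in particular, the maximality check is not something that can be waved away as ``preventing residues from wrapping around''; it is exactly the exceptional-solution dichotomy in equation~(\ref{eqn:two-r-tau-solutions}) of the paper's proof of Prop.~\ref{prop:generic-implies-unique-weight} that strong genericity shuts off.
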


This proposition is analogous to \cite[Prop.~3.8(2)]{ds15}, although they use a different definition of the `packets' of weights $P_w$. We will give a proof using some of the combinatorics developed in \S\ref{sec:proof-of-conj}, hence we will defer the proof until then.

\begin{prop}\label{prop:P-w-size-weak-case}
Suppose $\ovr{r}$ is weakly generic, but not strongly generic. Let $w = (w_\tau)_\tau \in \mathcal{W}$. Then we have that $ 0\le |P_w| \le 2^{f-\delta_w}$, where 
\[
\delta_w = |\{\tau\in\Hom_{\FFp}(k,\FFpb)\mid w_\tau = 0\text{ or }w_\tau = e\}|.
\]
\end{prop}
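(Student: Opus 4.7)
The plan is to translate the computation of $|P_w|$ into a count of pairs $(J,x)$ and exploit the injectivity of the maximal-pair assignment. First, by Thm.~\ref{thm:gen-conj} combined with Thm.~\ref{thm:L=L^AH}, for any $\sigma\in W^\mathrm{exp}(\ovr{r}^\mathrm{ss})$ with dimension vector $\ell$ (as in Defn.~\ref{defn:dim-vec-l}) we have
\[
L_\sigma(\chi_1,\chi_2) = \mathrm{Span}(\{c_\alpha\mid \alpha=(m_{\tau,j},k),\ 0\le j<\ell_\tau,\ 0\le k<f''\}),
\]
possibly augmented by $c_\mathrm{un}$ or $c_\mathrm{tr}$ in the trivial or cyclotomic cases. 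Comparing with the definition of $L_w$, one obtains $\sigma\in P_w$ if and only if $\ell_\tau = e-w_\tau$ for every $\tau$; the trivial/cyclotomic cases require checking separately that the extra basis element $c_\mathrm{un}$ (respectively $c_\mathrm{tr}$) appears in $L_\sigma(\chi_1,\chi_2)$ exactly when it appears in $L_w$, which is a direct inspection of Defn.~\ref{defn:L^AH}.

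Second, I would observe that the assignment $\sigma\mapsto (J^\mathrm{max},x^\mathrm{max})$ from Prop.~\ref{prop:max-s-t} is injective on $W^\mathrm{exp}(\ovr{r}^\mathrm{ss})$. Indeed, given $(J,x)$ together with the fixed characters $\chi_1,\chi_2$, the defining congruence in Defn.~\ref{defn:S-chi1-chi2-r} solves uniquely for the pair $(a_\tau,b_\tau)$ (working within the allowed ranges and modulo the standard Serre-weight equivalence), and hence reconstructs $\sigma=\sigma_{a,b}$. Consequently $|P_w|$ is bounded above by the cardinality of the set of pairs $(J,x)$ with $J\subseteq \Hom_{\FFp}(k,\FFpb)$ and $x_\tau\in[0,e-1]$ whose associated dimension vector equals $e-w$.

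To count those pairs, one analyses each embedding $\tau$ independently via Defn.~\ref{defn:dim-vec-l}: if $\ell_\tau=e$ (i.e.\ $w_\tau=0$) then necessarily $\tau\circ\varphi^{-1}\in J$ and $x_\tau=e-1$; if $\ell_\tau=0$ (i.e.\ $w_\tau=e$) then necessarily $\tau\circ\varphi^{-1}\notin J$ and $x_\tau=0$; in all intermediate cases there are precisely two admissible options, distinguished by whether $\tau\circ\varphi^{-1}\in J$ or not (with $x_\tau$ adjusted accordingly). Multiplying over the $f$ embeddings yields the target count $2^{f-\delta_w}$, giving the upper bound $|P_w|\le 2^{f-\delta_w}$; the lower bound $|P_w|\ge 0$ is vacuous.

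The main obstacle, compared to the strongly generic situation, is that the injection $P_w\hookrightarrow\{(J,x)\text{ with dimension vector }e-w\}$ need not be surjective. A candidate pair $(J,x)$ may fail to be the maximum of $\mathcal{S}(\chi_1,\chi_2,\sigma)$ for the $\sigma$ it determines: there could be a strictly larger $(J',x')\succ (J,x)$ also lying in $\mathcal{S}(\chi_1,\chi_2,\sigma)$, in which case the genuine dimension vector of $\sigma$ differs and $\sigma$ contributes to a different $P_{w'}$. Characterising exactly when this non-maximality occurs is the combinatorial difficulty alluded to at the end of the introduction; fortunately, for the upper bound asserted here one does not need such a characterisation, so the argument terminates at the counting step above.
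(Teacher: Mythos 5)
Your proof follows essentially the same route as the paper: reduce $\sigma\in P_w$ to the condition $\ell_\tau=e-w_\tau$ via Thm.~\ref{thm:gen-conj}, invoke injectivity of $\sigma\mapsto(J^{\mathrm{max}},x^{\mathrm{max}})$, count admissible $(J,x)$ with prescribed dimension vector to get the bound $2^{f-\delta_w}$, and note that non-surjectivity of the injection is what prevents equality. One step should be tightened, though: your assertion that the congruence in Defn.~\ref{defn:S-chi1-chi2-r} ``solves uniquely'' for $(a,b)$ given $(J,x)$ is not a formal consequence of Serre-weight equivalence and allowed ranges — the congruence genuinely can admit two solutions in $[1,p]^f$ — but rather is exactly the content of Prop.~\ref{prop:generic-implies-unique-weight}, whose exceptional cases are ruled out here because ``weakly but not strongly generic'' forces $n_\tau\neq e$ and $n_\tau\neq p-1-e$ for some $\tau$; the paper's proof cites this explicitly, and your write-up should too.
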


Since the proof of this proposition is similar to the proof of Prop.~\ref{prop:P-w-size-strong-case}, we will also postpone its proof until \S\ref{sec:proof-of-conj}.

\begin{defn}
We will say $\ovr{r}$ is tr\`es ramifi\'ee if $c_{\ovr{r}}\notin L_0$.
\end{defn}

See \cite[Chap.~4]{ste22}, in particular Prop.~4.1.2, for the relationship of this definition to other definitions of tr\`es ramifi\'ee. We remark that $c_{\ovr{r}}\in L_0$ always holds unless $\chi$ is cyclotomic, $\chi_2$ is unramified and $a_\tau - b_\tau = p-1$ for all $\tau$. In the latter case, $L_0\subseteq H^1(G_K,\FFpb(\chi))$ forms a codimension 1 subspace and $\ovr{r}$ is tr\`es ramifi\'ee precisely when it is not contained in this subspace.

\begin{thm}\label{thm:mainthm}
Suppose $\ovr{r}$ is weakly generic. We have
\[
W(\ovr{r}) = \coprod_{w \le w^\mathrm{max}} P_w
\]
for some $w^\mathrm{max} \in \mathcal{W}$ depending on $\ovr{r}$, unless $\ovr{r}$ is tr\`es ramifi\'ee in which case $W(\ovr{r}) = \{\sigma_{a,b}\}$ where $\chi_2|_{I_K} = \prod_{\tau} \omega_\tau^{b_\tau}$ and $a_\tau - b_\tau = p-1$ for all $\tau$. 
\end{thm}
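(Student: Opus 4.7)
The approach is to reduce $W(\ovr{r})$ to a statement about coordinate subspaces $L_w$ via Thm.~\ref{thm:gen-conj}, then extract $w^\mathrm{max}$ by a standard lattice/max-closure argument, treating the tr\`es ramifi\'ee exception at the end.

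First I would apply Cor.~\ref{cor:W^cr=W^exp-nonsemisimple} together with Defn.~\ref{defn:w-exp-reducible} to rewrite $\sigma \in W(\ovr{r})$ as the conjunction $\sigma \in W^\mathrm{exp}(\ovr{r}^\mathrm{ss})$ and $c_{\ovr{r}} \in L^\mathrm{AH}_\sigma(\chi_1,\chi_2)$. Under weak genericity, Thm.~\ref{thm:gen-conj} and Lem.~\ref{lem:m-tau-j} identify
\[
J^\mathrm{AH}_\sigma = \{(m_{\tau,j},k) \mid 0 \le j < \ell_\tau,\; 0 \le k < f''\},
\]
so setting $w(\sigma) := (e - \ell_\tau)_\tau \in \mathcal{W}$ the span part of $L^\mathrm{AH}_\sigma(\chi_1,\chi_2)$ coincides with $L_{w(\sigma)}$. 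Comparing the $c_\mathrm{un}$-clauses in Defn.~\ref{defn:L^AH} and in the definition of $L_w$ gives $L_\sigma(\chi_1,\chi_2) = L_{w(\sigma)}$ in general; the one exception is the cyclotomic subcase where $\chi$ is cyclotomic, $\chi_2$ unramified and $r_\tau = p$ for all $\tau$, in which an extra $c_\mathrm{tr}$ appears in $L^\mathrm{AH}_\sigma$ and which will be subsumed by the tr\`es ramifi\'ee analysis. Since distinct $w$ yield distinct $L_w$, the map $\sigma \mapsto w(\sigma)$ partitions $W^\mathrm{exp}(\ovr{r}^\mathrm{ss})$ into the packets $\{P_w\}_{w \in \mathcal{W}}$.

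Next, by Prop.~\ref{prop:h-1-basis} the $c_{(m_{\tau,j},k)}$ are part of a basis of $H^1(G_K,\FFpb(\chi))$, so the $L_w$ are coordinate subspaces and satisfy the lattice identities
\[
L_w \cap L_{w'} = L_{\max(w,w')} \qquad\text{and}\qquad L_w \subseteq L_{w'} \iff w' \le w,
\]
with componentwise $\max$. Consequently, $S := \{w \in \mathcal{W} \mid c_{\ovr{r}} \in L_w\}$ is closed under $\max$ and downward closed. Assuming $\ovr{r}$ is not tr\`es ramifi\'ee, $c_{\ovr{r}} \in L_0$ forces $0 \in S$, so the finite set $S$ has a unique maximum $w^\mathrm{max}$ and $S = \{w \mid w \le w^\mathrm{max}\}$. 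Combining, $\sigma \in W(\ovr{r}) \iff w(\sigma) \in S \iff w(\sigma) \le w^\mathrm{max}$, yielding the desired decomposition $W(\ovr{r}) = \coprod_{w \le w^\mathrm{max}} P_w$.

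Finally, in the tr\`es ramifi\'ee case, the remark preceding the theorem forces $\chi$ cyclotomic and $\chi_2$ unramified. For any $\sigma \in W(\ovr{r})$, since $c_{\ovr{r}} \notin L_0$ but the span part of $L^\mathrm{AH}_\sigma$ lies in $L_0$, the $c_\mathrm{tr}$-clause of Defn.~\ref{defn:L^AH} must apply, forcing $r_\tau = p$ for all $\tau$, i.e. $a_\tau - b_\tau = p-1$ for all $\tau$. Combined with the $W^\mathrm{exp}(\ovr{r}^\mathrm{ss})$ condition and $\chi_2$ unramified (giving $\chi_2|_{I_K} = \prod_\tau \omega_\tau^{b_\tau}$), this pins down $(a,b)$ uniquely; a direct check using Thm.~\ref{thm:gen-conj} (that $\ell_\tau = e$ for all $\tau$, so including $c_\mathrm{tr}$ yields the full $H^1(G_K,\FFpb(\chi))$) shows $c_{\ovr{r}} \in L^\mathrm{AH}_\sigma$ automatically, confirming $W(\ovr{r}) = \{\sigma_{a,b}\}$. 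The main obstacle in executing this plan is the careful bookkeeping around the auxiliary basis vectors $c_\mathrm{un}$ and $c_\mathrm{tr}$ in the trivial/cyclotomic subcases, where the definitions of $L^\mathrm{AH}_\sigma$ and $L_w$ can differ by one basis vector and must be reconciled by hand.
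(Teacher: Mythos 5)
Your non-très-ramifiée argument tracks the paper's almost exactly: both reduce $L_\sigma$ to a coordinate subspace $L_{w(\sigma)}$ via Thm.~\ref{thm:gen-conj}, then extract $w^\mathrm{max}$ from the coordinate-subspace lattice, with your ``closed under $\max$ and downward-closed'' phrasing being a mild repackaging of the paper's identity $L_{w^\mathrm{max}} = \bigcap_{w\in\mathcal{W}_{\ovr{r}}} L_w$.

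The genuine divergence is in the très ramifiée case. The paper does not argue intrinsically there at all: it cites \cite[Cor.~6.2]{bs22} to show that the specific weight $\sigma_{a,b}$ of the statement lies in $W(\ovr r)$, and \cite[Thm.~4.1.1]{ste20} to exclude every weight with $r_\tau < p-1$ for some $\tau$. You instead derive $r_\tau = p$ from the $c_\mathrm{tr}$-clause of Defn.~\ref{defn:L^AH}. This is a nice observation, but your claim that the constraint $r_\tau = p$ together with $\sigma \in W^\mathrm{exp}(\ovr r^\mathrm{ss})$ ``pins down $(a,b)$ uniquely'' is asserted, not proved; a priori $W^\mathrm{exp}(\ovr r^\mathrm{ss})$ can contain more than one weight with $a_\tau - b_\tau = p-1$ for all $\tau$ (the two diagonal entries of $\ovr r^\mathrm{ss}|_{I_K}$ can each play the role of $\prod_\tau\omega_\tau^{b_\tau}$), and you need to show all but one are excluded. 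The paper's second citation does exactly the complementary exclusion work your proof is missing. Relatedly, your remark that the cyclotomic $c_\mathrm{tr}$-exception ``will be subsumed by the très ramifiée analysis'' is not quite right: the $c_\mathrm{tr}$-clause is triggered by properties of the \emph{weight} ($r_\tau = p$) and of $\chi,\chi_2$, independently of whether $c_{\ovr r}\in L_0$. In particular, when $\chi$ is cyclotomic, $\chi_2$ is unramified, and $\ovr r$ is \emph{not} très ramifiée, the weight with $r_\tau=p$ still has $L_\sigma = H^1(G_K,\FFpb(\chi))$, which is not equal to $L_w$ for any $w$, so it does not literally land in any $P_w$; your reconciliation of the $c_\mathrm{tr}$-clause and the definition of $L_w$ needs to be completed (this wrinkle is not visibly addressed in the paper's own proof either, but the paper's phrasing does not commit to the false identity $L_\sigma = L_{w(\sigma)}$ in this subcase the way your proof plan does).
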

\begin{proof}
Suppose that $\ovr{r}$ is not tr\`es ramifi\'ee. It follows from the inclusion $W(\ovr r)\subset W(\ovr r^\mathrm{ss})$, Thm.~\ref{thm:L=L^AH} and Thm.~\ref{thm:gen-conj} that $W(\ovr{r})\subseteq \coprod_{w \in \mathcal{W}} P_w$. Conversely, let $c_{\ovr{r}}$ denote the extension class associated to $\ovr{r}$ and write $\mathcal{W}_{\ovr{r}}$ for all $w\in\mathcal{W}$ such that $c_{\ovr{r}} \in L_w$. (In other words, all $w\in\mathcal{W}$ such that $P_w\subseteq W(\ovr{r})$.) Since $\ovr{r}$ is not tr\`es ramifi\'ee, $c_{\ovr r}$ lies in $L_0$ so $\mathcal{W}_{\ovr{r}}$ is non-empty. Take $w^\mathrm{max}:=\max_{w\in \mathcal{W}_{\ovr r}}\{w\}$. Then 
\[
L_{w^\mathrm{max}} = \bigcap_{w\in \mathcal{W}_{\ovr r}} L_w.
\]
Hence, $c_{\ovr r} \in L_{w^\mathrm{max}}$. Furthermore, $w\le w^\mathrm{max}$ implies $c_{\ovr r} \in L_w$. It follows that
\[
W(\ovr{r}) = \coprod_{w \le w^\mathrm{max}} P_w.
\]

Now suppose that $\ovr{r}$ is tr\`es ramifi\'ee. Then $c_{\ovr r}$ does not lie in the codimension one subspace $L_0$ of $H^1(G_K,\FFpb(\chi))$. It follows from \cite[Cor.~6.2]{bs22} that $\sigma_{a,b} \in W(\ovr r)$ with $\sigma_{a,b}$ as in the statement of the theorem. On the other hand, if $\sigma = \sigma_{a,b}$ is a Serre weight with $r_\tau = a_\tau - b_\tau <p-1$ for some $\tau$, then it follows from \cite[Thm.~4.1.1]{ste20} that $c_{\ovr r}\notin L_\sigma(\chi_1,\chi_2)$. Therefore, $\sigma\notin W(\ovr r)$.
\end{proof}

\section{Proof of Theorem~\ref{thm:gen-conj}}\label{sec:proof-of-conj}

In this section we will prove Theorem~\ref{thm:gen-conj} and Propositions~\ref{prop:P-w-size-strong-case} and \ref{prop:P-w-size-weak-case}. Most of the proof of Thm.~\ref{thm:gen-conj} will consist of delicate combinatorial arguments. We will start with a Lemma giving an explicit solution to a congruence.

\subsection{Explicitly solving a congruence}

Suppose $J\subseteq \Hom_{\FFp}(k,\FFpb)$ and $c\in \ZZ^{\Hom_{\FFp}(k,\FFpb)}$ such that $c_\tau \in[1,p-1]$ for all $\tau$. Recall the definition
\[
(-1)^{\tau \notin J}:=\begin{cases}-1 &\text{if }\tau\notin J\text{;} \\1 &\text{if }\tau \in J.  \end{cases}
\]
In this subsection we would like to study solutions $r\in\ZZ^{\Hom_{\FFp}(k,\FFpb)}$ to the congruence
\[
\sum_{i=0}^{f-1} (-1)^{\tau \circ \varphi^i \notin J} r_{\tau \circ \varphi^i}p^i \equiv \Omega_{\tau,c} \pmod{p^f-1}
\]
satisfying $r_\tau \in [1,p]$ for all $\tau$. Such a solution always exists, but need not be unique for a given $(J,c)$. Let us give an explicit definition of a solution $r$ to this congruence.

\begin{rem}
In \cite{ds15} the authors restrict themselves to strongly generic representations for which the above congruence needs to be solved only when $c_\tau \in [1,p-2]$ for all $\tau$. This is a far easier combinatorial problem and may have been one of the reasons why the authors adopted this more restrictive definition of genericity.
\end{rem}

\begin{defn}\label{defn:delta-J}
Let $\delta_J\colon \ZZ^{\Hom_{\FFp}(k,\FFpb)}\times \Hom_{\FFp}(k,\FFpb) \to \ZZ^{\Hom_{\FFp}(k,\FFpb)}$ be defined as follows. Fix $\tau\in\Hom_{\FFp}(k,\FFpb)$ and let us write $(y_\sigma)_{\sigma\in \Hom_{\FFp}(k,\FFpb)}= \delta_J((x_\sigma)_\sigma,\tau)$. If $1\le x_\tau \le p$, then we set $y_\sigma := x_\sigma$ for all $\sigma \in \Hom_{\FFp}(k,\FFpb)$. Otherwise, we have two special cases.

\begin{enumerate}
\item\label{delta-case1} If $x_\tau \le 0$, then we set $y_\tau:=x_\tau+p$ and we define
\[
y_{\tau\circ \varphi}:=
\begin{cases}
x_{\tau \circ \varphi}-1 	&\text{if }\tau\circ\varphi\notin J; \\ 
x_{\tau \circ \varphi}+1	&\text{if }\tau\circ\varphi\in J. 
\end{cases}
\]
We define $y_\sigma:=x_\sigma$ for all remaining embeddings $\sigma\in\Hom_{\FFp}(k,\FFpb)\setminus\{\tau,\tau\circ\varphi\}$.

\item\label{delta-case2} If $x_\tau>p$, then we set $y_\tau:=x_\tau-p$ and we define
\[
y_{\tau\circ\varphi}:=
\begin{cases}  
x_{\tau\circ\varphi}-1	&\text{if }\tau\circ\varphi\notin J;\\
x_{\tau\circ\varphi}+1 &\text{if }\tau\circ\varphi\in J.
\end{cases}
\]
We define $y_\sigma:=x_\sigma$ for all remaining embeddings $\sigma\in\Hom_{\FFp}(k,\FFpb)\setminus\{\tau,\tau\circ\varphi\}$.
\end{enumerate} 
\end{defn}

\begin{defn}\label{defn:r-J-c}
Suppose $J\subseteq \Hom_{\FFp}(k,\FFpb)$ and $c\in \ZZ^{\Hom_{\FFp}(k,\FFpb)}$ such that $c_\tau \in[1,p-1]$ for all $\tau$. We define $r(J,c)\in \ZZ^{\Hom_{\FFp}(k,\FFpb)}$ recursively by the following process. Define $y_0\in \ZZ^{\Hom_{\FFp}(k,\FFpb)}$ as
\[
y_{0,\tau}:=\begin{cases}
c_\tau 		&\text{ if }\tau \in J, \tau\circ\varphi^{-1}\in J; \\
c_\tau +1 	&\text{ if }\tau \in J, \tau\circ\varphi^{-1}\notin J; \\
p-c_\tau	&\text{ if }\tau\notin J, \tau\circ\varphi^{-1}\in J; \\
p-1-c_\tau	&\text{ if }\tau\notin J, \tau\circ\varphi^{-1}\notin J.  
\end{cases}
\]
Note that $y_{0,\tau}\ge 0$ for all $\tau$. If $y_{0,\tau}>0$ for all $\tau$, then we set $r(J,c) = y_0$. Otherwise, we fix $\tau_0$ such that $y_{0,\tau_0} = 0$. For $\kappa=1,\dotsc,f$, we then define $y_\kappa\in \ZZ^{\Hom_{\FFp}(k,\FFpb)}$ recursively as $y_\kappa = \delta_J(y_{\kappa-1},\tau_0\circ\varphi^{\kappa-1})$. We set $r(J,c) = y_f$.
\end{defn}

\begin{rem}
For notational purposes, when $y_{0,\tau}>0$ for all $\tau$ we may assume when necessary that $y_\kappa = y_0$ for all $\kappa=1,\dotsc,f$.
\end{rem}

\begin{rem}
We will prove in Prop.~\ref{prop:generic-implies-unique-weight} that $r(J,c)$ is independent of the choice of $\tau_0$ in its definition in the context in which we will need it.
\end{rem}

\begin{lem}\label{lem:r-J-c-solves-congruence}
Suppose $J\subseteq \Hom_{\FFp}(k,\FFpb)$ and $c\in \ZZ^{\Hom_{\FFp}(k,\FFpb)}$ such that $c_\tau \in[1,p-1]$ for all $\tau$. Then $r = r(J,c) \in \ZZ^{\Hom_{\FFp}(k,\FFpb)}$ (defined using any appropriate choice of $\tau_0$ if necessary) solves the congruence
\[
\sum_{i=0}^{f-1} (-1)^{\tau \circ \varphi^i \notin J} r_{\tau \circ \varphi^i}p^i \equiv \Omega_{\tau,c} \pmod{p^f-1}
\]
and satisfies $r_\tau \in [1,p]$ for all $\tau$.
\end{lem}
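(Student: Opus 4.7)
The plan is to verify the two required properties of $r(J,c)$---that it lies in $[1,p]$ coordinatewise and that it solves the congruence modulo $p^f-1$---by tracking the vector through each step $y_0,y_1,\ldots,y_f$ of the recursion. First I would handle the base vector $y_0$. The range $y_{0,\tau}\in[0,p]$ is immediate from the four defining cases, and moreover $y_{0,\tau}=0$ can only occur in the last case, which forces $\tau\notin J$, $\tau\circ\varphi^{-1}\notin J$, and $c_\tau=p-1$; in particular the chosen $\tau_0$ satisfies $\tau_0\notin J$. For the congruence, setting $a_i:=1$ if $\tau\circ\varphi^i\in J$ and $0$ otherwise, a direct case-by-case computation shows that the signed term in the sum at index $i$ equals $c_{\tau\circ\varphi^i}p^i + (1-a_{i-1})p^i - (1-a_i)p^{i+1}$. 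Summing over $i$ gives $\Omega_{\tau,c}$ plus an error; shifting $i\mapsto i+1$ in the second error piece and using $p^f\equiv 1\pmod{p^f-1}$ together with the cyclic convention $a_{-1}=a_{f-1}$ makes the two error sums cancel.

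Next, I would compute how a single application of $\delta_J$ at $\sigma=\tau\circ\varphi^i$ alters the sum. A direct check shows that the change in $y_{\sigma\circ\varphi}$ contributes $+p^{i+1}$ independently of whether $\sigma\circ\varphi\in J$, while the change in $y_\sigma$ contributes $(-1)^{\sigma\notin J}p^{i+1}$ in case 1 and $-(-1)^{\sigma\notin J}p^{i+1}$ in case 2. Hence case 1 preserves the sum exactly when $\sigma\notin J$, and case 2 exactly when $\sigma\in J$, so the congruence will be preserved throughout provided the recursion only ever applies the ``correct'' case.

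To establish the latter, I would prove by induction on $\kappa\ge 1$ the following invariant: writing $\sigma=\tau_0\circ\varphi^{\kappa-1}$, one has $y_{\kappa-1,\sigma}\in[1,p+1]$ if $\sigma\in J$ and $y_{\kappa-1,\sigma}\in[-1,p]$ if $\sigma\notin J$. The base case $\kappa=1$ was treated above. For the inductive step, observe that $y_{\kappa-1,\sigma}$ differs from $y_{0,\sigma}$ either by $0$ or by $+(-1)^{\sigma\notin J}$ (the only possible perturbation coming from the previous non-trivial application of $\delta_J$ at $\sigma\circ\varphi^{-1}$); comparing with the four possible explicit values of $y_{0,\sigma}$ confines $y_{\kappa-1,\sigma}$ to the claimed intervals. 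In particular, $y_{\kappa-1,\sigma}\le 0$ forces $\sigma\notin J$ and $y_{\kappa-1,\sigma}>p$ forces $\sigma\in J$, so the correct case of $\delta_J$ is applied at every step and the congruence is preserved throughout.

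Finally, for termination: the coordinate at $\tau_0\circ\varphi^{\kappa-1}$ lies in $[1,p]$ immediately after step $\kappa$ and is never revisited, with the sole exception of the wrap-around at step $f$, which acts at $\tau_0\circ\varphi^{f-1}$ and may perturb $y_{\tau_0}$. But $y_{\tau_0}$ was set to $p$ at step $1$, untouched in between, and because $\tau_0\notin J$ any non-trivial perturbation is $-1$, so it remains in $[1,p]$. The main obstacle is the bookkeeping needed for the invariant: one must carefully check each of the four shapes of $y_{0,\sigma}$ against the possible $\pm 1$ perturbation inherited from the previous step, and handle the cyclic indexing at the boundary between steps $f-1$ and $f$ with care.
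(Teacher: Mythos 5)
Your proof is correct and follows essentially the same structure as the paper's: verify the congruence for $y_0$, show each application of $\delta_J$ in the recursion preserves the congruence (by observing the algorithm always falls into the ``matching'' case, determined by tracking the at-most-$\pm 1$ perturbation of $y_{\kappa-1,\tau_0\circ\varphi^{\kappa-1}}$ relative to $y_0$), and then deduce the $[1,p]$ bounds inductively while handling the wrap-around at step $f$ separately. You supply more explicit detail than the paper in the base-case congruence computation (the $a_i$-telescoping) and phrase the step-by-step analysis as a range invariant rather than via the paper's vectors $v^\tau$, but the key observations and inductive bookkeeping are identical.
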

\begin{proof}
We adopt the same notation as in Defn.~\ref{defn:r-J-c}. It follows immediately from the definitions that $y_0$ solves the congruence and $y_{0,\tau}\in [0,p]$ for all $\tau$. Thus, the lemma is proved if $y_{0,\tau}>0$ for all $\tau$. Therefore, we may assume we have fixed a choice $\tau_0$ such that $y_{0,\tau_0} = 0$. Then $c_{\tau_0} = p-1$ and $\tau_0,\tau_0\circ \varphi^{-1}\notin J$.

For any $\tau \in \Hom_{\FFp}(k,\FFpb)$, define $v^\tau\in\ZZ^{\Hom_{\FFp}(k,\FFpb)}$ via $v^\tau_\tau = p$ and 
\[
v^\tau_{\tau\circ \varphi} = \begin{cases}-1 &\text{if $\tau,\tau\circ\varphi \in J$ or $\tau,\tau\circ\varphi \notin J$;} \\ 1 &\text{if $\tau\in J$, $\tau\circ\varphi\notin J$ or $\tau\notin J$, $\tau\circ\varphi\in J$,} 
\end{cases}
\]
and let $v^\tau_{\tau'} = 0$ for all $\tau'\in\Hom_{\FFp}(k,\FFpb)$. We see easily that adding $\pm v^\tau$, for any $\tau$, to $y_0$ does not change the congruence. For $\kappa=0,\dotsc,f-1$, we claim that $y_{\kappa+1}=\delta_J(y_\kappa,\tau_0\circ\varphi^\kappa)$ leaves $y_\kappa$ invariant, adds $v^{\tau_0\circ \varphi^\kappa}$ to $y_\kappa$ or subtracts $v^{\tau_0\circ \varphi^\kappa}$ from $y_\kappa$ implying the congruence remains unchanged. We note that $y_{\kappa,\tau_0\circ\varphi^{\kappa}}\le 0$ implies that $y_{0,\tau_0\circ\varphi^{\kappa}}=0$ or $y_{\kappa,\tau_0\circ\varphi^{\kappa}}=y_{0,\tau_0\circ\varphi^{\kappa}}-1$. Either case implies $\tau_0\circ\varphi^{\kappa}\notin J$. Applying Case~(\ref{delta-case1}) of Defn.~\ref{defn:delta-J}, we see that if $y_{\kappa,\tau_0\circ\varphi^{\kappa}}\le 0$, then $\delta_J(y_\kappa,\tau_0\circ\varphi^\kappa)$ changes $y_\kappa$ by adding $v^{\tau_0\circ \varphi^\kappa}$. On the other hand, $y_{\kappa,\tau_0\circ\varphi^{\kappa}}>p$ implies that $y_{\kappa,\tau_0\circ\varphi^{\kappa}} = y_{0,\tau_0\circ\varphi^{\kappa}}+1$ which only occurs when $\tau_0\circ\varphi^{\kappa}\in J$. Hence, applying Case~(\ref{delta-case2}) of Defn.~\ref{defn:delta-J}, we see that if $y_{\kappa,\tau_0\circ\varphi^{\kappa}}>p$, then $\delta_J(y_\kappa,\tau_0\circ\varphi^\kappa)$ changes $y_\kappa$ by subtracting $v^{\tau_0\circ \varphi^\kappa}$.

Now we will prove that $r_\tau\in[1,p]$ for all $\tau$. For $\kappa = 1,\dotsc,f$, we claim inductively that $y_{\kappa,\tau_0\circ\varphi^i}\in [1,p]$ for all $0\le i < \kappa$. Since $y_{1,\tau_0} = p$, the claim is true for $\kappa = 1$. Suppose it is true for $\kappa = n$. To prove the claim for $\kappa = n+1$, we note that $y_{n+1,\tau_0\circ\varphi^i} = y_{n,\tau_0\circ\varphi^i}$ for all $0\le i <n$, except that, possibly, when $n=f-1$, we have that $y_{f,\tau_0}\neq y_{f-1,\tau_0}$. In the latter case, since $\tau_0\notin J$ and $y_{f-1,\tau_0}=y_{1,\tau_0} = p$, we have that $y_{f,\tau_0}=p-1\in[1,p]$, so we may disregard this case. Otherwise, to complete the proof we must show that $y_{n+1,\tau_0\circ \varphi^{n}}\in[1,p]$. Since $y_{0,\tau_0\circ\varphi^n}\in[0,p]$, it is clear that $y_{n,\tau_0\circ\varphi^n}\in [-1,p+1]$. If $y_{n,\tau_0\circ\varphi^n}\in\{-1,0\}$, then $y_{n+1,\tau_0\circ\varphi^n}\in\{p-1,p\}$. If $y_{n,\tau_0\circ\varphi^n}=p+1$, then $y_{n+1,\tau_0\circ\varphi^n}=1$. If $y_{n,\tau_0\circ\varphi^n}\in [1,p]$, then $y_{n+1,\tau_0\circ\varphi^n}=y_{n,\tau_0\circ\varphi^n}$. In all cases, $y_{n+1,\tau_0\circ\varphi^n}\in [1,p]$ which completes the proof.
\end{proof}

%\begin{lem}\label{lem:r-J-c_indep-tau}
%Suppose $J\subseteq \Hom_{\FFp}(k,\FFpb)$ and $c\in \ZZ^{\Hom_{\FFp}(k,\FFpb)}$ such that $c_\tau \in[1,p-1]$ for all $\tau$. Then $r = r(J,c)$ is independent of the choice of $\tau_0$ in its definition
%\end{lem}
%\begin{proof}
%Suppose $r,r'\in\ZZ^{\Hom_{\FFp}(k,\FFpb)}$ with $r_\tau,r_\tau'\in [1,p]$ for all $\tau$ are solutions to the congruence
%\[
%\sum_{i=0}^{f-1} (-1)^{\tau \circ \varphi^i \notin J} r_{\tau \circ \varphi^i}p^i \equiv \Omega_{\tau,c} \pmod{p^f-1}.
%\]
%
%
%\end{proof}

\subsection{Unique Serre weight in the weakly generic case}

Suppose $\ovr{r}\sim\begin{pmatrix}\chi_1 & * \\ 0 & \chi_2 \end{pmatrix}$ for characters $\chi_1,\chi_2:G_K\to \FFpb^\times$. Recall that $\chi:=\chi_1\chi_2^{-1}$ and $\chi|_{I_K} = \prod_\tau \omega_\tau^{n_\tau}$ for $n\in\ZZ^{\Hom_{\FFp}(k,\FFpb)}$ with $n_\tau\in [1,p]$, for all $\tau$, and $n_\tau<p$ for at least one $\tau$.

\begin{prop}\label{prop:generic-implies-unique-weight}
Suppose $\ovr{r}$ is weakly generic and $\sigma_{a,b}\in W^\mathrm{exp}(\ovr{r}^\mathrm{ss})$. Let $(J,x) = (J_\mathrm{max},x_\mathrm{max})$ be the unique maximal element of $\mathcal{S}(\chi_1,\chi_2,\sigma_{a,b})$. Define $c_\tau := n_\tau +e-1-2x_\tau \in [1,p-1]$ for all $\tau$. Write $r=r(J,c)\in \ZZ^{\Hom_{\FFp}(k,\FFpb)}$ as in Defn.~\ref{defn:r-J-c}.

Then $a_\tau-b_\tau+1 = r_\tau$ for all $\tau$ except when $J=\Hom_{\FFp}(k,\FFpb)$, $n_\tau = e$ and $x_\tau=e-1$ for all $\tau$ or $J=\varnothing$, $n_\tau=p-1-e$ and $x_\tau = 0$ for all $\tau$. In the latter two cases, we have the additional possibility that $a_\tau -b_\tau +1 = p$ for all $\tau$.

Furthermore, $r(J,c)$ is independent of the choice of $\tau_0$ in its definition. 
\end{prop}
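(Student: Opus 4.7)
My approach is to identify $r_\tau := a_\tau - b_\tau + 1$ as a solution to the same congruence
\[
\sum_{i=0}^{f-1} (-1)^{\tau\circ\varphi^i\notin J}\,r_{\tau\circ\varphi^i}\,p^i \equiv \Omega_{\tau,c}\pmod{p^f-1}
\]
that $r(J, c)$ solves by Lemma~\ref{lem:r-J-c-solves-congruence}, and then to show that this congruence has a unique solution with entries in $[1,p]$ under weak genericity, except in the two explicitly described exceptional configurations.

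\emph{Step 1: deriving the congruence.} Since $(J, x)\in \mathcal{S}(\chi_1,\chi_2,\sigma_{a,b})$, Defn.~\ref{defn:S-chi1-chi2-r} writes $\chi|_{I_K}$ as $\prod_\tau \omega_\tau^{e_\tau}$, where $e_\tau = (a_\tau-b_\tau)+2+2x_\tau-e$ if $\tau\in J$ and $e_\tau=-(a_\tau-b_\tau)+2x_\tau-e$ if $\tau\notin J$. Using $n_\tau - c_\tau = 1 + 2x_\tau - e$, this becomes $e_\tau = (-1)^{\tau\notin J}r_\tau + n_\tau - c_\tau$. Matching against $\chi|_{I_K} = \prod_\tau \omega_\tau^{n_\tau}$ via $\omega_{\tau\circ\varphi}=\omega_\tau^p$ yields the congruence above, and the bounds $r_\tau\in [1,p]$ are immediate from $a_\tau-b_\tau\in[0,p-1]$.

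\emph{Step 2: uniqueness up to $\pm(p-1)\mathbf{1}$.} Let $d_\tau := (-1)^{\tau\notin J}(r_\tau - r(J,c)_\tau)\in [-(p-1),p-1]$; then $\sum_i d_{\tau\circ\varphi^i}p^i \equiv 0 \pmod{p^f-1}$. The bound $|\sum_i d_ip^i|\le p^f-1$ restricts this sum to $\{0,\pm(p^f-1)\}$, and a base-$p$ digit argument (reducing mod $p$ to pin $d_0$ and inducting on $f$) forces $d\in\{0,(p-1)\mathbf{1},-(p-1)\mathbf{1}\}$.

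\emph{Step 3: the two exceptional configurations.} If $d=\pm(p-1)\mathbf{1}$, both $r$ and $r(J,c)$ take only the values $1$ and $p$, assuming opposite ``colourings'' of $J$ versus $J^c$. Substituting this shape back into the congruence yields a further constraint $\sum_i e_i p^i \equiv 0\pmod{p^f-1}$, with $e_i\in[0,p-1]$ determined by $c_{\tau\circ\varphi^i}$ and the adjacency of $\tau\circ\varphi^i$ and $\tau\circ\varphi^{i-1}$ in $J$ (four sub-cases). A further size/digit analysis pins this sum to $0$ or $p^f-1$, and the weak-genericity bounds $n_\tau\in[e,p-e]$ then force either $J=\Hom_{\FFp}(k,\FFpb)$ with $n_\tau=e$ and $x_\tau=e-1$ (so $c_\tau=1$) or $J=\varnothing$ with $n_\tau=p-1-e$ and $x_\tau=0$ (so $c_\tau=p-2$). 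In both configurations, Defn.~\ref{defn:r-J-c} gives $y_{0,\tau}=1$ for every $\tau$ directly, so $r(J,c)=(1,\dotsc,1)$, and one checks that $r=(p,\dotsc,p)$ is indeed the complementary solution of the congruence -- this is the advertised ``additional possibility'' $a_\tau-b_\tau+1=p$.

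\emph{Independence of $\tau_0$ and main obstacle.} Outside the two exceptional configurations, Step~2 makes the $[1,p]$-valued solution unique, so the output of the recursion of Defn.~\ref{defn:r-J-c} is forced regardless of the choice of $\tau_0$; inside them, one sees that $y_0$ has no zero entries at all, so $\tau_0$ is never chosen. The main obstacle is Step~3: carefully extracting from the combined size, digit and weak-genericity constraints the fact that the four-case adjacency analysis admits only these two boundary configurations of $(J,c)$. This is the combinatorial heart of the argument and is precisely where the sharp form of weak genericity (as opposed to strong genericity, which would preclude $n_\tau\in\{e,p-e\}$ outright) is used.
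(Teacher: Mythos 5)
Steps 1 and 2 are correct and match the paper. The gap is in Step 3, and it is real. Unpack the four sub-cases of the digit constraint for $S = p^f - 1$, namely $c_{\tau\circ\varphi^i} - \mathbf{1}_J(\tau\circ\varphi^{i-1}) + \mathbf{1}_{J^{\mathrm{c}}}(\tau\circ\varphi^i) = p-1$ for all $i$: one finds only that $\tau\in J\Rightarrow\tau\circ\varphi^{-1}\notin J$, with $n_\tau = p-e$, $x_\tau=0$ whenever $\tau\in J$ or $\tau\circ\varphi^{-1}\in J$, and $n_\tau = p-1-e$, $x_\tau = 0$ otherwise. All of these values land inside the weak-genericity window $[e,p-e]$, so the digit analysis permits $J$ to be any subset with no $\varphi^{-1}$-adjacent pair inside it -- it does \emph{not} force $J\in\{\varnothing,\Hom_{\FFp}(k,\FFpb)\}$. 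Your conclusion at the end of Step 3 is therefore unjustified, and as a result you have not identified which of the two $[1,p]$-valued solutions $r(J,c)$ returns for such intermediate $J$.

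The missing ingredient is the maximality of $(J,x)$, which your sketch never invokes. The paper's argument has two pieces you would need: (i) a separate maximality argument, before the digit analysis, showing that if $J\notin\{\varnothing,\Hom_{\FFp}(k,\FFpb)\}$ and the Serre weight's $r$ were the solution with $r_\tau=1$ for $\tau\in J$ and $p$ for $\tau\notin J$, then taking $\kappa\in J$ with $\kappa\circ\varphi^{-1}\notin J$ and passing to $J' = (J\setminus\{\kappa\})\cup\{\kappa\circ\varphi^{-1}\}$, $x' = x$ gives $(J',x')\in\mathcal{S}(\chi_1,\chi_2,\sigma_{a,b})$ with $(J,x)\prec(J',x')$ (via $\Omega_{\kappa,s'-s}=p^f-1$ and $\Omega_{\tau,s'-s}=0$ otherwise), contradicting maximality; and (ii) inside the $S=p^f-1$ case with $J\neq\varnothing$, a direct check from the digit constraints that $y_{0,\tau}>0$ for all $\tau$, hence $r(J,c)=y_0$, and $y_{0,\tau}=p$ for $\tau\in J$, so $r(J,c)$ is the surviving solution. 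Your $\tau_0$-independence paragraph inherits the same problem -- the $[1,p]$-valued solution is not unique outside the two exceptional configurations, and in the $p=2$, $J=\varnothing$ exceptional case $y_{0,\tau}=0$ for all $\tau$, so your ``no zero entries'' claim also fails; the paper handles that case separately.
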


\begin{proof}
It follows from the assumption $(J,x)\in\mathcal{S}(\chi_1,\chi_2,\sigma_{a,b})$ and $\chi|_{I_K} = \prod_\tau \omega_\tau^{n_\tau}$ that
\[
\sum_{i=0}^{f-1} (-1)^{\tau\circ\varphi^i\notin J}(a_{\tau\circ\varphi^i} - b_{\tau\circ\varphi^i} +1)p^i \equiv \sum_{i=0}^{f-1} (n_{\tau\circ\varphi^i}+e -1 - 2x_{\tau\circ\varphi^i})p^i\pmod{p^f-1}.
\]
Therefore, if $c_\tau = n_\tau +e -1 -2x_{\tau}$ and $r_\tau = a_\tau - b_\tau +1$ for all $\tau$, then $c_\tau\in [1,p-1]$ and $r_\tau\in [1,p]$ for all $\tau$. Furthermore, $r$ solves the congruence
\begin{equation}\label{eqn:congruence-r-c}
\sum_{i=0}^{f-1} (-1)^{\tau\circ\varphi^i\notin J} r_{\tau\circ\varphi^i}p^i \equiv \Omega_{\tau,c}\pmod{p^f-1}.
\end{equation}
This congruence may not always have a unique solution $r\in\ZZ^{\Hom_{\FFp}(k,\FFpb)}$ satisfying $r_\tau\in [1,p]$ for all $\tau$. When it does have a unique solution it follows from Lem.~\ref{lem:r-J-c-solves-congruence} that $r=r(J,c)$. 

Therefore, suppose we have $r,r'\in\ZZ^{\Hom_{\FFp}(k,\FFpb)}$ satisfying the congruence with $r_\tau,r_\tau'\in [1,p]$ for all  $\tau$. Then
\[
\sum_{i=0}^{f-1}(-1)^{\tau\circ\varphi^i \notin J}(r_{\tau\circ\varphi^i} - r_{\tau\circ\varphi^i}')p^i \equiv 0 \pmod{p^f-1}.
\]
Since $r_\tau - r_\tau' \in [-(p-1),(p-1)]$ for all $\tau$, this implies that either $r = r'$ or, possibly after interchanging $r$ and $r'$, that
\begin{equation}\label{eqn:two-r-tau-solutions}
r_\tau = \begin{cases} p &\text{if }\tau \in J; \\ 1 &\text{if }\tau \notin J,\end{cases}
\hspace{1cm}\text{and}\hspace{1cm}
r_\tau' = \begin{cases} 1 &\text{if }\tau \in J; \\ p &\text{if }\tau \notin J.\end{cases}
\end{equation}
Suppose $J\neq \varnothing, \Hom_{\FFp}(k,\FFpb)$ and 
\[
r_\tau = \begin{cases} 1 &\text{if }\tau \in J; \\ p &\text{if }\tau \notin J.\end{cases}.
\] 
We claim this contradicts the maximality of $(J,x)$. In this case we may fix $\kappa\in J$ such that $\kappa\circ\varphi^{-1}\notin J$. Then define $J'$ by removing $\kappa$ from $J$ and adding $\kappa\circ\varphi^{-1}$. Let $x'=x$. Now define $s_\tau=x_\tau$ if $\tau\notin J$ and $s_\tau=r_\tau + x_\tau$ if $\tau\in J$ and similarly for $s'$ using $(J',x')$ so that $s=s(J,x)$ and $s' = s(J',x')$. Then it follows that $s_\kappa'-s_\kappa = -1$, $s_{\kappa\circ\varphi^{-1}}'-s_{\kappa\circ\varphi^{-1}} = p$ and $s_\tau'=s_\tau$ for all $\tau\in\Hom_{\FFp}(k,\FFpb)\setminus \{\kappa,\kappa\circ\varphi^{-1}\}$. It follows that $\Omega_{\kappa,s'-s} = (p^f-1)$ and $\Omega_{\tau,s'-s}=0$ for all $\tau\neq \kappa$. This contradicts the maximality of $(J,x)$ according to Prop.~\ref{prop:max-s-t}.

We are left with showing the following: if congruence~(\ref{eqn:congruence-r-c}) has multiple solutions, then either we are in one of the exceptional cases of the proposition or $r(J,c)$ produces the first of the two solutions of (\ref{eqn:two-r-tau-solutions}), i.e. the solution corresponding to $(J_\mathrm{max},x_\mathrm{max})$. For any subset $J\subseteq \Hom_{\FFp}(k,\FFpb)$, let $\1_J\colon \Hom_{\FFp}(k,\FFpb)\to \{0,1\}$ denote the characteristic function of $J$ and let $J^\mathrm{c}$ denote the complement of $J$ in $\Hom_{\FFp}(k,\FFpb)$. The above shows congruence~(\ref{eqn:congruence-r-c}) has a unique solution unless
\[
\Omega_{\tau,c} \equiv \sum_{i=0}^{f-1} (\1_J(\tau\circ\varphi^i)p-\1_{J^\mathrm{c}}(\tau\circ\varphi^i))p^i \pmod{p^f-1}.
\]
Suppose this congruence is satisfied. Then
\[
\sum_{i=0}^{f-1} (c_{\tau\circ\varphi^i} - \1_{J}(\tau\circ\varphi^{i-1})+\1_{J^\mathrm{c}}(\tau\circ\varphi^i))p^i\equiv 0 \pmod{p^f-1}.
\]
Since $c_{\tau\circ\varphi^i} - \1_{J}(\tau\circ\varphi^{i-1})+\1_{J^\mathrm{c}}(\tau\circ\varphi^i)\in [0,p]$ for all $i$, it follows that the sum
\[
S:=\sum_{i=0}^{f-1} (c_{\tau\circ\varphi^i} - \1_{J}(\tau\circ\varphi^{i-1})+\1_{J^\mathrm{c}}(\tau\circ\varphi^i))p^i
\]
is equal to $0$ or $p^f-1$ (resp. $0$, $2^f-1$ or $2(2^f-1)$) if $p>2$ (resp. $p=2$).

Firstly, suppose $p>2$. If $S=0$, then $c_{\tau\circ\varphi^i} - \1_{J}(\tau\circ\varphi^{i-1})+\1_{J^\mathrm{c}}(\tau\circ\varphi^i) = 0$ for all $i$. It is clear that this implies $J=\Hom_{\FFp}(k,\FFpb)$, $n_\tau = e$ and $x_\tau = e-1$ for all $\tau$. Now suppose $S=p^f-1$. If the top coefficient in $S$ were $p$, then $S\ge p^f$. If the top coefficient of $S$ were $\le p-2$, then $S\le (p-2)p^{f-1} +p\left(\frac{p^{f-1}-1}{p-1}\right) < p^f-1$. So we find that $c_{\tau\circ\varphi^{f-1}} - \1_{J}(\tau\circ\varphi^{f-2})+\1_{J^\mathrm{c}}(\tau\circ\varphi^{f-1}) = p-1$. Continuing like this we find that $c_{\tau\circ\varphi^i} - \1_{J}(\tau\circ\varphi^{i-1})+\1_{J^\mathrm{c}}(\tau\circ\varphi^i)=p-1$ for all $i$. From this it follows that $y_{0,\tau}>0$ for all $\tau$ in the definition of $r(J,c)$ (see Defn.~\ref{defn:r-J-c}) so that $r(J,c) = y_0$. Moreover, if $\tau\in J$, then $\tau\circ\varphi^{-1}\notin J$. If $J\neq \varnothing$ and $\tau\in J$, then we see that $y_{0,\tau} =r(J,c)_\tau=p$. Hence, $r(J,c)$ gives the solution of (\ref{eqn:two-r-tau-solutions}) corresponding to $(J_\mathrm{max},x_\mathrm{max})$ in this case. On the other hand, if $J=\varnothing$, then it is easily seen that $n_\tau = p-1-e$ and $x_\tau = 0$, for all $\tau$, is the only way to ensure all coefficients equal $p-1$. Note that in either of the two exceptional cases we have that $r(J,c)_\tau = 1$ for all $\tau$, so that the other solution is given by $r_\tau=p$ for all $\tau$.

Now suppose $p=2$, which implies $e=1$, $n_\tau = 1$ and $x_\tau = 0$, for all $\tau$. As before, if $S = 0$, then $J=\Hom_{\FFp}(k,\FFpb)$. If $S=2(2^f-1)$, then it is also clear that $J=\varnothing$. In either case it follows from the definition that $r(J,c)_\tau = 1$ for all $\tau$. Hence, the other solution will be given by $r_\tau = 2$ for all $\tau$. Lastly, assume $S = 2^f-1$. As before this implies $1 - \1_{J}(\tau\circ\varphi^{i-1})+\1_{J^\mathrm{c}}(\tau\circ\varphi^i)=1$ for all $i$. Rewriting gives $\1_{J}(\tau)=\1_{J^\mathrm{c}}(\tau\circ\varphi)$, for all $\tau$, so that $J\neq \varnothing, \Hom_{\FFp}(k,\FFpb)$. It follows that in the definition of $r(J,c)$ we have $y_{0,\tau}>0$ for all $\tau$. Hence, if $\tau\in J$, then $r(J,c)_\tau = 2$ so that $r(J,c)$ is the solution of (\ref{eqn:congruence-r-c}) corresponding to $(J_\mathrm{max},x_\mathrm{max})$.

It is easily checked that in the exceptional cases $y_{0,\tau}>0$ for all $\tau$ in the definition of $r(J,c)$ so that this definition is not dependent on any choices, except when $p=2$ and $J=\varnothing$; in the latter case, we have $y_{0,\tau}=0$ for all $\tau$ and independence of a choice of $\tau_0$ follows trivially. In all other cases, we have shown that $r(J,c)$ gives the unique solution of congruence (\ref{eqn:congruence-r-c}) corresponding to the unique maximal choice of $(J,x)$. Therefore, $r(J,c)$ is uniquely defined and independent of any choices also in these cases. 
\end{proof}

As a consequence of this proposition we adopt the following notation.

\begin{defn}\label{defn:r-J-x}
Suppose $\ovr{r}$ is weakly generic and $\sigma_{a,b}\in W^\mathrm{exp}(\ovr{r}^\mathrm{ss})$. Let $(J,x) = (J_\mathrm{max},x_\mathrm{max})$ be the unique maximal element of $\mathcal{S}(\chi_1,\chi_2,\sigma_{a,b})$. Since we will only apply Defn.~\ref{defn:r-J-c} with $c_\tau := n_\tau +e-1-2x_\tau$ for all $\tau$, we will write $r(J,x)$ instead of $r(J,c)$ with $c_\tau := n_\tau +e-1-2x_\tau$ for all $\tau$.
\end{defn}

\subsection{Proving the main result}

Our goal in this subsection will be to prove Theorem~\ref{thm:gen-conj}. Throughout we fix a reducible $\ovr{r}\sim\begin{pmatrix}\chi_1 & * \\ 0 & \chi_2\end{pmatrix}$ and assume $\ovr{r}$ is weakly generic in the sense of Hypothesis~\ref{hypo:generic}. We fix a Serre weight $\sigma = \sigma_{a,b}$ and write $r_\tau = a_\tau - b_\tau +1$. We will often switch between the notations $(J,x)$ and $(s,t)$, where we use the translation $s=s(J,x)$ (see Defn.~\ref{defn:order-on-S-chi1-chi2-sigma}) and $t_\tau = a_\tau - b_\tau +e -s_\tau$ for all $\tau$. Explicitly,
\[
s_\tau = \begin{cases} x_\tau &\text{if $\tau\notin J$;} \\ r_\tau +x_\tau &\text{if $\tau\in J$,}\end{cases} 
\hspace{1cm}
t_\tau = \begin{cases} r_\tau+e-1-x_\tau &\text{if $\tau\notin J$;} \\ e-1-x_\tau &\text{if $\tau\in J$.}\end{cases}
\]
Note that $s,t \in \ZZ^{\Hom_{\FFp}(k,\FFpb)}_{\ge 0}$ with, for each $\tau$, $s_\tau + t_\tau = a_\tau - b_\tau +e$ and either $s_\tau\ge a_\tau - b_\tau + 1$ or $t_\tau\ge a_\tau - b_\tau + 1$. Moreover, $\chi_1|_{I_K} = \prod_\tau \omega_\tau^{s_\tau}$ and $\chi_2|_{I_K} = \prod_\tau \omega_\tau^{t_\tau}$. Therefore, $(s,t)$ is as in \cite[Thm.~5.1.5]{gls15}. We will say $(s,t)$ is maximal if the corresponding $(J,x)$ is maximal and we remark that this is equivalent to the maximality in \cite[\S5.3]{gls15} (cf. \cite[Prop.~6.5]{bs22} and its proof).

Conversely, suppose we are given $(s,t)$ such that $s,t \in \ZZ^{\Hom_{\FFp}(k,\FFpb)}_{\ge 0}$ with, for each $\tau$, $s_\tau + t_\tau = a_\tau - b_\tau +e$ and either $s_\tau\ge a_\tau - b_\tau + 1$ or $t_\tau\ge a_\tau - b_\tau + 1$ and, moreover, such that $\chi_1|_{I_K} = \prod_\tau \omega_\tau^{s_\tau}$ and $\chi_2|_{I_K} = \prod_\tau \omega_\tau^{t_\tau}$. Then we define
\[
J=\{\tau\in\Hom_{\FFp}(k,\FFpb)\mid t_\tau \le e-1\},\hspace{1cm} x_\tau=\begin{cases}s_\tau &\text{if $\tau\notin J$;} \\ s_\tau - r_\tau &\text{if $\tau\in J$,}\end{cases}
\]
for all $\tau$.

\begin{lem}
Given $(s,t)$ suppose that $(J,x)$ is defined as above and suppose $(J,x)$ is maximal as in Prop.~\ref{prop:max-s-t}. We have that
\[
J=\{\tau\in\Hom_{\FFp}(k,\FFpb)\mid t_\tau <r_\tau\}.
\]
\end{lem}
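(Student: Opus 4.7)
The plan is to prove the two inclusions $\{\tau : t_\tau < r_\tau\} \subseteq J$ and $J \subseteq \{\tau : t_\tau < r_\tau\}$ separately. The first inclusion is immediate from the explicit formulas and does not require maximality: if $\tau \notin J$, then $x_\tau \in [0, e-1]$ gives $t_\tau = r_\tau + e - 1 - x_\tau \ge r_\tau$, so $\tau$ is not in the left-hand set.

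For the reverse inclusion $J \subseteq \{\tau : t_\tau < r_\tau\}$, I would argue by contradiction using the uniqueness of the maximal element in Prop.~\ref{prop:max-s-t}. Suppose some $\tau_0 \in J$ satisfies $t_{\tau_0} \ge r_{\tau_0}$. Using $s_\tau + t_\tau = r_\tau + e - 1$, this translates to $s_{\tau_0} \le e - 1$; and since $\tau_0 \in J$ gives $s_{\tau_0} = r_{\tau_0} + x_{\tau_0} \ge 1$, we have $s_{\tau_0} \in [0, e-1]$. The key construction is to move $\tau_0$ out of $J$ without changing $s$: define $J' := J \setminus \{\tau_0\}$, set $x'_{\tau_0} := s_{\tau_0}$, and keep $x'_\tau := x_\tau$ for $\tau \ne \tau_0$. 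Then $x'_\tau \in [0, e-1]$ for every $\tau$, and by inspection $s(J', x')_\tau = s(J, x)_\tau$ at every $\tau$ (at $\tau_0$, since $\tau_0 \notin J'$ we have $s(J', x')_{\tau_0} = x'_{\tau_0} = s_{\tau_0}$). Consequently $t(J', x') = t(J, x)$, the formulas in Defn.~\ref{defn:S-chi1-chi2-r} produce the same $\chi_1|_{I_K}$ and $\chi_2|_{I_K}$, and $(J', x') \in \mathcal{S}(\chi_1, \chi_2, \sigma)$.

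Since $s(J, x) - s(J', x') = 0$, Defn.~\ref{defn:order-on-S-chi1-chi2-sigma} gives both $(J, x) \preceq (J', x')$ and $(J', x') \preceq (J, x)$, so $(J', x')$ is also a maximal element of $\mathcal{S}(\chi_1, \chi_2, \sigma)$. But $(J', x') \ne (J, x)$ because $\tau_0 \in J \setminus J'$, contradicting the uniqueness asserted by Prop.~\ref{prop:max-s-t}. The only step that requires any real checking is that $(J', x') \in \mathcal{S}$, and this reduces to the observation that moving $\tau_0$ from $J$ to $J^{\mathrm{c}}$ while shifting $x_{\tau_0} \mapsto x_{\tau_0} + r_{\tau_0}$ leaves both diagonal exponents in Defn.~\ref{defn:S-chi1-chi2-r} unchanged at $\tau_0$; the substitution $x'_{\tau_0} = s_{\tau_0}$ is precisely what is needed, and this is exactly available thanks to the hypothesis $s_{\tau_0} \le e - 1$.
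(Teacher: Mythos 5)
Your proof is correct and follows essentially the same route as the paper's. The first inclusion is immediate from the constraints on $(s,t)$, and the reverse inclusion uses exactly the paper's key move: remove $\tau_0$ from $J$ and set $x'_{\tau_0}:=s_{\tau_0}=x_{\tau_0}+r_{\tau_0}$ (legal since $t_{\tau_0}\ge r_{\tau_0}$ forces $s_{\tau_0}\le e-1$), which leaves $s$ and $t$ unchanged and thus produces a distinct element of $\mathcal{S}(\chi_1,\chi_2,\sigma)$ equivalent to $(J,x)$ under $\preceq$, contradicting the uniqueness in Prop.~\ref{prop:max-s-t}.
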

\begin{proof}
Let $J=\{\tau\in\Hom_{\FFp}(k,\FFpb)\mid t_\tau \le e-1\}$ and $I=\{\tau\in\Hom_{\FFp}(k,\FFpb)\mid t_\tau <r_\tau\}$. Since $t_\tau<r_\tau$ implies $t_\tau\le e-1$, we have $I\subseteq J$. We show that $J\not\subseteq I$ contradicts that $(J,x)$ is uniquely maximal. Suppose $\tau\in J\setminus I$. Let $J' = J\setminus\{\tau\}$ and $x_\tau' = x_\tau + r_\tau$. Note that $r_\tau\le t_\tau = e - 1 - x_\tau$ implies $r_\tau+x_\tau \le e-1$. Let $x_\kappa' = x_\kappa$ for all $\kappa\neq \tau$. We find $S(J,x) = s(J',x')$ and $\Omega_{\kappa,s(J',x')-s(J,x)} = 0$ for all $\kappa\in\Hom_{\FFp}(k,\FFpb)$. Therefore, $(J,x)\le(J',x')$ by Prop.~\ref{prop:max-s-t} contradicting the fact that $(J,x)$ is the unique maximal element.
\end{proof}

Note that it follows from this lemma that as long as $(J,x)$ and $(s,t)$ are maximal, the two notations are equivalent. When dealing with $r(J,x)$ we will often refer to the notation of Defn.~\ref{defn:r-J-x} or Defn.~\ref{defn:r-J-c} (such as the $y_{0,\tau}$ appearing there) without mentioning this explicitly. We will denote the standard $p$-adic valuation on $\QQ$ by $v_p\colon \QQ \to \ZZ$. Now we will prove four propositions that will help us to prove Thm.~\ref{thm:gen-conj}.

Throughout this section we will fix a Serre weight $\sigma = \sigma_{a,b}$ and assume $(J,x)$ (equivalently, $(s,t)$) is maximal.

\begin{prop}\label{prop:comb-prop1}
For all $\tau\in \Hom_{\FFp}(k,\FFpb)$, we have that $t_\tau\in\mathcal{I}_\tau\text{ if and only if }t_\tau<r_\tau$.
\end{prop}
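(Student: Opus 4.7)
The plan is to combine the preceding lemma---which, under maximality of $(J,x)$, identifies $J = \{\tau : t_\tau < r_\tau\}$---with a direct case analysis of $\mathcal{I}_\tau$ as defined in Defn.~\ref{defn:J^AH}. By that lemma, the desired equivalence reduces to showing that $t_\tau \in \mathcal{I}_\tau$ if and only if $\tau \in J$, and so it suffices to analyse the two cases separately.

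For $\tau \in J$, the formula $\mathcal{I}_\tau = \{t_\tau\} \cup [r_\tau, s_\tau - 1]$ visibly contains $t_\tau$, so the conclusion is immediate.

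For $\tau \notin J$, one has $\mathcal{I}_\tau = [0, s_\tau - 1]$ with $s_\tau = x_\tau$ and $t_\tau = r_\tau + e - 1 - x_\tau$, so I must verify the inequality $t_\tau \geq s_\tau$, i.e.\ $2 x_\tau \leq r_\tau + e - 1$. The crucial step will be to use maximality of $(J,x)$ to force $x_\tau \leq r_\tau - 1$: if instead $x_\tau \geq r_\tau$, I would define $J' := J \cup \{\tau\}$ and $x_\tau' := x_\tau - r_\tau$ (keeping all other coordinates of $x'$ equal to those of $x$). Then $x_\tau' \in [0, e - 1]$, and using the $(J,x) \leftrightarrow (s,t)$ dictionary one computes $s(J',x')_\tau = r_\tau + x_\tau' = x_\tau = s(J,x)_\tau$ and similarly for $t$, so the two associated diagonal characters agree and $(J',x') \in \mathcal{S}(\chi_1,\chi_2,\sigma)$ with $s(J',x') = s(J,x)$. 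Since coinciding $s$-vectors make the two pairs $\preceq$-equivalent, the uniqueness clause of Prop.~\ref{prop:max-s-t} would force $(J,x) = (J',x')$, contradicting $\tau \in J' \setminus J$. Hence $x_\tau \leq r_\tau - 1$, and combined with the standing bound $x_\tau \leq e - 1$ this gives $2 x_\tau \leq (r_\tau - 1) + (e - 1) < r_\tau + e - 1$, so in fact $t_\tau > s_\tau$.

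The main obstacle will be the swap argument: one must carefully justify that the new pair $(J',x')$ genuinely lies in $\mathcal{S}(\chi_1,\chi_2,\sigma)$ (via the matrix identification) and that two elements of $\mathcal{S}(\chi_1,\chi_2,\sigma)$ with coinciding $s$-vectors must be forced equal by the uniqueness clause of Prop.~\ref{prop:max-s-t}. Once the swap is in place, the closing numerical estimate is routine.
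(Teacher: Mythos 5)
Your proof is correct, and it takes a genuinely different route from the paper's. The paper's proof handles the direction $t_\tau\ge r_\tau\Rightarrow t_\tau\ge s_\tau$ by invoking Prop.~\ref{prop:generic-implies-unique-weight}: after disposing of the exceptional cases, it uses the explicit formula $r=r(J,x)$ from Defn.~\ref{defn:r-J-c} to show $y_{0,\tau}\ge 2x_\tau$ and hence $r_\tau\ge 2x_\tau-1$, then deduces $t_\tau\ge s_\tau$ for $e>1$ (with $e=1$ treated separately). This relies essentially on the weak genericity bounds $n_\tau\in[e,p-e]$. Your proof instead runs a maximality swap: if $x_\tau\ge r_\tau$ for some $\tau\notin J$, then $(J',x')$ with $J'=J\cup\{\tau\}$, $x'_\tau=x_\tau-r_\tau$ is a bona fide element of $\mathcal{S}(\chi_1,\chi_2,\sigma)$ (one checks $x'_\tau\in[0,e-1]$ and the $(s,t)$-vectors coincide, so the matrix condition is preserved), and since $s(J',x')=s(J,x)$ the two pairs are mutually $\preceq$-comparable, giving two maximal elements and contradicting Prop.~\ref{prop:max-s-t}. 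This is precisely the same style of argument the paper uses for the unnumbered lemma just above (there with $\tau$ removed from $J$ rather than added), so it sits naturally alongside the surrounding material. Your route is shorter and more elementary, and noticeably it does not invoke genericity at all -- only the maximality hypothesis. What it does not do is exercise the $r(J,c)$ machinery, which the paper is also implicitly warming up for Props.~\ref{prop:comb-prop2} and~\ref{prop:comb-prop3}, where the genericity bounds really are needed.
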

\begin{proof}
Fix $\tau\in\Hom_{\FFp}(k,\FFpb)$. If $t_\tau <r_\tau$, then it follows from the definition that $t_\tau\in \mathcal{I}_\tau$. Therefore, suppose that $t_\tau\ge r_\tau$. We will show that $t_\tau\ge s_\tau$ proving that $t_\tau\notin \mathcal{I}_\tau$.

Note that $\tau\notin J$. We want to apply Prop.~\ref{prop:generic-implies-unique-weight}. Let us deal with the exceptional cases first. Since we may exclude $J=\Hom_{\FFp}(k,\FFpb)$, suppose $J=\varnothing$, $n_\kappa = p-1-e$, $x_\kappa = 0$ and $r_\kappa = p$ for all $\kappa$. Then $t_\tau = p-1+e$ and $s_\tau = 0$, so it is clear that $t_\tau\ge s_\tau$.

Therefore, it follows from Prop.~\ref{prop:generic-implies-unique-weight} that $r=r(J,x)$. Since $\tau\notin J$, we see that
\[
y_{0,\tau}=\begin{cases}p-e-n_\tau+1+2x_\tau &\text{if $\tau\circ\varphi^{-1}\in J$;} \\ p-e-n_\tau+2x_\tau &\text{if $\tau\circ\varphi^{-1}\notin J$.}\end{cases}
\]
Since $p-e-n_\tau\ge 0$, we find $y_{0,\tau}\ge 2x_\tau$. Similarly, we see that $y_{0,\tau}\le p-1$. Therefore, $r_\tau = y_{f,\tau} \ge y_{0,\tau} - 1$. We conclude that $r_\tau \ge 2x_\tau - 1$.

We note that $t_\tau\ge s_\tau$ if and only if $r_\tau + e -1 - x_\tau \ge x_\tau$. Rewriting gives $r_\tau \ge 2x_\tau +1 -e$. Therefore, the proposition follows immediately from $r_\tau \ge 2x_\tau - 1$ when $e>1$. When $e=1$, we have $x_\tau = 0$ so that $t_\tau = r_\tau > 0 = s_\tau$.
\end{proof}

\begin{prop}\label{prop:comb-prop2}
For all $\tau\in\Hom_{\FFp}(k,\FFpb)$ and all $c\in \mathcal{I}_\tau$, we have that
\[
v_p(\xi_\tau - c(p^f-1))>0
\]
if and only if either $c=t_\tau$ or $t_\tau = 0$, $s_\tau = p-1+e$ and $r_\tau = p$ for all $\tau$ and $c=p$.
\end{prop}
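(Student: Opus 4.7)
My approach will be to reduce the valuation condition modulo $p$, enumerate the solutions $c \in \mathcal{I}_\tau$ to the resulting congruence, and eliminate the non-trivial solutions using weak genericity together with the recursive structure of $r(J,x)$ from Defn.~\ref{defn:r-J-c}.

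First, I would write
\[
\xi_\tau - c(p^f-1) = (p^f-1)(s_\tau - c) + \Omega_{\tau, s-t}
\]
and reduce modulo $p$: since $p^f - 1 \equiv -1 \pmod p$ and $\Omega_{\tau, s-t} \equiv s_\tau - t_\tau \pmod p$, the right-hand side is $\equiv c - t_\tau \pmod p$. Hence $v_p(\xi_\tau - c(p^f-1)) > 0$ if and only if $c \equiv t_\tau \pmod p$. Next, since every element of $\mathcal{I}_\tau$ lies in $[0, s_\tau - 1] \cup \{t_\tau\}$ with $s_\tau, t_\tau \le p+e-1 < 2p$, only $c \in \{t_\tau - p,\, t_\tau,\, t_\tau + p\}$ need be considered. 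The case $c = t_\tau$ (in $\mathcal{I}_\tau$ iff $t_\tau < r_\tau$, by Prop.~\ref{prop:comb-prop1}) is the expected main case. The case $c = t_\tau + p$ arises only for $\tau \in J$, where the condition $c \in [r_\tau, s_\tau - 1]$ becomes $r_\tau + 2x_\tau \ge p + e$ (Case A); the case $c = t_\tau - p$ arises only for $\tau \notin J$, where $c \in [0, x_\tau - 1]$ becomes $r_\tau \in [p-e+1+x_\tau,\, p-e+2x_\tau]$ (Case B).

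To eliminate Cases A and B outside the stated exception, I would bound $r_\tau$ via the recursion in Defn.~\ref{defn:r-J-c}. For $\tau \in J$, weak genericity $n_\tau \le p-e$ gives $y_{0,\tau} \in \{c_\tau, c_\tau+1\}$ with $y_{0,\tau} \le p - 2x_\tau$, and the only modifications to $y_{\cdot,\tau}$ during the recursion are $+1$ (from $\delta_J$ at $\tau\circ\varphi^{-1}$, as $\tau \in J$) and $-p$ (from $\delta_J$ at $\tau$ when $y>p$); this forces $r_\tau \le p - 2x_\tau + 1$, so Case A forces $e \le 1$, contradicting $x_\tau \ge e/2$ (from Case A together with $r_\tau \le p$) combined with $x_\tau \le e-1$. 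The only escape is that $r$ does not equal $r(J,x)$, which by Prop.~\ref{prop:generic-implies-unique-weight} occurs only in the two exceptional subcases: in the first ($J = \Hom_{\FFp}(k,\FFpb)$, $n_\tau = e$, $x_\tau = e-1$, $r_\tau = p$) one computes $t_\tau = 0$, $s_\tau = p+e-1$, and $c = p \in \{0\} \cup [p, p+e-2] = \mathcal{I}_\tau$---exactly the stated exception---while in the second ($J = \varnothing$) we have $\mathcal{I}_\tau = \varnothing$. For $\tau \notin J$, the parallel bound $y_{0,\tau} \le p - 2e + 1 + 2x_\tau$ (with possible $-1$ from $\delta_J$ at $\tau\circ\varphi^{-1}$ and at most one $+p$ from $\delta_J$ at $\tau$) shows that Case B either forces $x_\tau \ge e$ (contradicting $x_\tau \le e-1$) or requires $x_\tau = 0$, in which case the Case B interval $[p-e+1, p-e]$ is empty; so Case B never occurs.

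The main obstacle will be the careful bookkeeping in the last step: one must use that in Defn.~\ref{defn:r-J-c} the correction $\delta_J$ at $\tau\circ\varphi^{-1}$ is applied immediately before $\delta_J$ at $\tau$ in order to justify the tight upper bound on $r_\tau$, and then verify that weak genericity excludes Cases A and B outside precisely the first exceptional configuration, where the exception $c = p$ manifests.
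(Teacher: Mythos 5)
Your argument is correct, and while it ultimately leans on the same ingredients as the paper (Prop.~\ref{prop:generic-implies-unique-weight}, the bounds on $y_{0,\tau}$ from Defn.~\ref{defn:r-J-c}, and weak genericity), the organisation is genuinely different and a bit tidier. The paper splits directly into the cases $t_\tau \ge r_\tau$ and $t_\tau < r_\tau$ and, in each, bounds the residue of $\xi_\tau - c(p^f-1)$ modulo $p$ inside $[1,p-1]$ by bare-hands estimates on $r_\tau - x_\tau + e - 1 - c$ (resp.\ $x_\tau - e + 1 + c$). You instead make the unifying observation that $\xi_\tau - c(p^f-1) \equiv c - t_\tau \pmod p$ for every $\tau$, which immediately recasts the problem as: show that $c = t_\tau \pm p$ never lies in $\mathcal{I}_\tau$ outside the stated exception. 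This turns the proposition into a crisper Diophantine question (your Cases A and B) and isolates exactly where the recursive bound $r_\tau \le y_{0,\tau}+1$ (for $\tau \in J$) is the pivot. What the paper's version buys is that it never needs to classify candidate $c$'s; what yours buys is transparency about \emph{why} only $t_\tau$ and the extremal $c=p$ can cause trouble.

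Two small points worth tightening if you were to write this up. First, in Case B your ``requires $x_\tau = 0$'' branch (the subcase where a $+p$ correction occurs in the recursion) is stated without deriving it: you should note that a $+p$ forces $y_{0,\tau}\le 1$, which combined with $n_\tau \le p-e$ gives $x_\tau \le 0$ in both sub-subcases $\tau\circ\varphi^{-1}\in J$ and $\tau\circ\varphi^{-1}\notin J$; the $\tau = \tau_0$ boundary case (where $\delta_J$ at $\tau\circ\varphi^{-1}$ is applied \emph{last}, not immediately before) also lands here because $y_{0,\tau_0}=0$ forces $c_{\tau_0}=p-1$ and hence $x_{\tau_0}=0$ under weak genericity. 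Second, you should remark explicitly that the $J=\varnothing$ exceptional case of Prop.~\ref{prop:generic-implies-unique-weight} is harmless for Case B because there $\mathcal{I}_\tau = [0,x_\tau-1]=\varnothing$; you do this for Case A but not B. Neither of these is a gap in the idea, only in the bookkeeping you flagged.
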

\begin{proof}
Fix $\tau\in\Hom_{\FFp}(k,\FFpb)$. Since $\xi_\tau\equiv -t_\tau\bmod p$, it follows immediately that $\xi_\tau - t_\tau(p^f-1)\equiv 0 \bmod p$. It is also clear that $t_\tau = 0$ and $c=p$ implies that $\xi_\tau - c(p^f-1)\equiv 0 \bmod p$. This finishes the if-direction of the proposition.

For the other direction, let us treat the cases $t_\tau \ge r_\tau$ and $t_\tau <r_\tau$ separately. Suppose $t_\tau \ge r_\tau$. Then $t_\tau>0$ and $t_\tau\notin \mathcal{I}_\tau$ by Prop.~\ref{prop:comb-prop1}. Therefore, we need to show $v_p(\xi_\tau - c(p^f-1)) = 0$ for all $c\in \mathcal{I}_\tau$. Note that $\mathcal{I}_\tau = [0,s_\tau -1 ] = [0,x_\tau - 1]$, so we may assume without loss of generality that $x_\tau>0$. Together with $\tau\notin J$ this excludes both exceptional cases from Prop.~\ref{prop:generic-implies-unique-weight} so that $r = r(J,x)$. Since $\tau\notin J$, we find that $\xi_\tau - c(p^f-1)\equiv -(r_\tau - x_\tau+e-1-c)\bmod{p}$. In the proof of Prop.~\ref{prop:comb-prop1} we have proved already that $r_\tau \ge 2x_\tau + 1-e$. Therefore, $r_\tau - x_\tau+e-1-c\ge x_\tau-c\ge 1$ since $c\le x_\tau -1$. To bound $r_\tau$ from above, we use $r=r(J,x)$ such that
\[
y_{0,\tau}=\begin{cases}p-e-n_\tau+1+2x_\tau &\text{if $\tau\circ\varphi^{-1}\in J$;} \\ p-e-n_\tau+2x_\tau &\text{if $\tau\circ\varphi^{-1}\notin J$.}\end{cases}
\]
Since $x_\tau>0$, it follows that $y_{0,\tau}\in [2,p-1]$. Since $\tau\notin J$, Defn.~\ref{defn:r-J-c} gives $r_\tau\in\{y_{0,\tau},y_{0,\tau}-1\}$. Therefore, $r_\tau - x_\tau + e - 1 - c \le y_{0,\tau} - x_\tau + e -1 -c \le p-n_\tau+x_\tau \le p-1$. Therefore, $\xi_\tau - c(p^f-1)\not\equiv 0 \bmod{p}$.

Now suppose $t_\tau<s_\tau$. Looking at the exceptional cases of Prop.~\ref{prop:generic-implies-unique-weight}, we may exclude $J=\varnothing$ instantly. On the other hand, if $J = \Hom_{\FFp}(k,\FFpb)$, $n_\tau = e$ and $x_\tau = e-1$ for all $\tau$, then we should deal with the case $r_\tau = p$ for all $\tau$ before we can apply the algorithm of Defn.~\ref{defn:r-J-c}. In the latter case, we have $t_\tau = 0$, $s_\tau = p-1+e$ for all $\tau$ and $\mathcal{I}_\tau \setminus\{t_\tau\} = [p, p+e-2]$. Since $e\le p/2$, it is clear that only $c=p$ in this range gives $\xi_\tau - c(p^f-1) \equiv c \equiv 0\bmod{p}$. However, $c=p$ and $t_\tau = 0$, $s_\tau = p-1+e$, $r_\tau = p$ for all $\tau$ was explicitly stated in the proposition as giving $v_p(\xi_\tau - c(p^f-1))>0$.

Having excluded the exceptional cases of Prop.~\ref{prop:generic-implies-unique-weight}, we may now assume $r=r(J,x)$. Let $c\in \mathcal{I}_\tau\setminus\{t_\tau\}$. Since $\tau \in J$, we have $\xi_\tau-c(p^f-1)\equiv x_\tau - e +1 +c\bmod{p}$. We need to show that $x_\tau-e+1+c \not\equiv 0\bmod{p}$. Since $\mathcal{I}_\tau\setminus\{t_\tau\} = [r_\tau,r_\tau + x_\tau - 1]$, we may assume $x_\tau>0$. Using Defn.~\ref{defn:r-J-c}, we find
\[
y_{0,\tau}=\begin{cases}n_\tau+e-1-2x_\tau &\text{if $\tau\circ\varphi^{-1}\in J$;} \\n_\tau+e-2x_\tau &\text{if $\tau\circ\varphi^{-1}\notin J$.} \end{cases}
\]
Since $y_{0,\tau}\in[1,p-2]$ and $\tau\in J$, it follows from Defn.~\ref{defn:r-J-c} that $r_\tau \in\{y_{0,\tau},y_{0,\tau}+1\}$. Therefore, $x_\tau-e+1+c\ge x_\tau -e+1+y_{0,\tau}\ge n_\tau - x_\tau\ge 1$. On the other hand, $x_\tau-e+1+c \le 2x_\tau - e + 1 + y_{0,\tau} \le p-e+1$. Since $e=1$ gives $x_\tau = 0$, we may assume $e>1$. So we have proved $x_\tau-e+1+c\in [1,p-1]$ as required.
\end{proof}

\begin{prop}\label{prop:comb-prop3}
Suppose $\kappa\in\Hom_{\FFp}(k,\FFpb)$ such that $t_\kappa<r_\kappa$. Then
\[
v_p(\xi_\kappa - t_\kappa(p^f-1))>1
\]
if and only if $e=1$ and $r_\tau = p$, $n_\tau = 1$ and $t_\tau = 0$ for all $\tau$.
\end{prop}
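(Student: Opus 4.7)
The plan is to compute $\xi_\kappa-t_\kappa(p^f-1)$ modulo $p^2$, reduce the condition $v_p>1$ to a simple congruence on $s_{\kappa\circ\varphi}-t_{\kappa\circ\varphi}$, and then exhaust the cases under weak genericity and maximality, treating the exceptional cases of Prop.~\ref{prop:generic-implies-unique-weight} at the end.

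Writing $\kappa':=\kappa\circ\varphi$ and using $\Omega_{\kappa,s-t}=\sum_{i=0}^{f-1}p^i(s_{\kappa\circ\varphi^i}-t_{\kappa\circ\varphi^i})$, I would first rearrange to obtain
\[
\xi_\kappa-t_\kappa(p^f-1) = p\Bigl[p^{f-1}(s_\kappa-t_\kappa) + \sum_{i=1}^{f-1}p^{i-1}(s_{\kappa\circ\varphi^i}-t_{\kappa\circ\varphi^i})\Bigr].
\]
The bracket is congruent to $s_{\kappa'}-t_{\kappa'}$ modulo $p$ (including when $f=1$, since then $\kappa'=\kappa$), so $v_p(\xi_\kappa-t_\kappa(p^f-1))>1$ if and only if $p\mid s_{\kappa'}-t_{\kappa'}$.

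Next I would split by whether $\kappa'\in J$: if so, $s_{\kappa'}-t_{\kappa'}=r_{\kappa'}+2x_{\kappa'}-(e-1)\in[2-e,\,p+e-1]$; if not, $s_{\kappa'}-t_{\kappa'}=-r_{\kappa'}+2x_{\kappa'}-(e-1)\in[-(p+e-1),\,e-2]$. In either, divisibility by $p$ under weak genericity forces the value into $\{0,\pm p\}$. Outside the exceptional cases of Prop.~\ref{prop:generic-implies-unique-weight} we have $r=r(J,x)$; since $\kappa\in J$ forces $\kappa'\circ\varphi^{-1}=\kappa\in J$, the appropriate formula gives $y_{0,\kappa'}=n_{\kappa'}+e-1-2x_{\kappa'}$ if $\kappa'\in J$ and $y_{0,\kappa'}=p-n_{\kappa'}-e+1+2x_{\kappa'}$ if $\kappa'\notin J$, both lying in $[1,p-1]$. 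Maximality of $(J,x)$ forces $r_{\kappa'}=y_{0,\kappa'}$: any cascade modification at $\kappa'$ in the $\delta_J$-recursion would require the out-of-range condition of Defn.~\ref{defn:delta-J} to trigger at $\kappa$, and such a configuration corresponds to a $p$-adic carry in $s(J,x)$ producing a strictly larger element of $\mathcal{S}(\chi_1,\chi_2,\sigma)$, contradicting maximality. Substitution then yields $s_{\kappa'}-t_{\kappa'}=n_{\kappa'}$ (case $\kappa'\in J$) or $n_{\kappa'}-p$ (case $\kappa'\notin J$); since $n_{\kappa'}\in[e,p-e]\subset[1,p-1]$, neither is divisible by $p$.

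Finally, of the exceptional cases of Prop.~\ref{prop:generic-implies-unique-weight}, the case $J=\varnothing$ is excluded by the hypothesis $\kappa\in J$, while for $J=\Hom_{\FFp}(k,\FFpb)$ with $n_\tau=e$ and $x_\tau=e-1$ for every $\tau$ the algorithmic solution $r_\tau=1$ gives $s_{\kappa'}-t_{\kappa'}=e\not\equiv 0\pmod p$; the alternative $r_\tau=p$ gives $s_{\kappa'}-t_{\kappa'}=p+e-1$, divisible by $p$ iff $e\equiv 1\pmod p$, which under weak genericity $e\le p/2$ forces $e=1$. This produces exactly the exceptional data $e=1$, $n_\tau=1$, $x_\tau=0$, $r_\tau=p$, $t_\tau=0$ for all $\tau$. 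For the converse direction, under these hypotheses $s_\tau-t_\tau=p$ for every $\tau$, and the identity from the reduction gives $\xi_\kappa-t_\kappa(p^f-1)=p^2(1+p+\cdots+p^{f-1})$, whose second factor is $\equiv 1\pmod p$, so $v_p=2>1$. The hardest step is the maximality claim $r_{\kappa'}=y_{0,\kappa'}$; making it precise requires translating the $\delta_J$-recursion of Defn.~\ref{defn:r-J-c} into the ordering $\preceq$ on $\mathcal{S}(\chi_1,\chi_2,\sigma)$ and showing that any cascade modification at $\kappa'$ corresponds to a carry operation breaking maximality.
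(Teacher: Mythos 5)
Your overall strategy mirrors the paper's at the start: you reduce $v_p(\xi_\kappa - t_\kappa(p^f-1))>1$ to the condition $p\mid s_{\kappa'}-t_{\kappa'}$ (with $\kappa':=\kappa\circ\varphi$), split according to whether $\kappa'\in J$, and compute $y_{0,\kappa'}$ using that $\kappa=\kappa'\circ\varphi^{-1}\in J$. Your treatment of the exceptional cases of Prop.~\ref{prop:generic-implies-unique-weight} and of the ``if'' direction is also fine. However, the core of your ``only if'' argument rests on the claim that maximality of $(J,x)$ forces $r_{\kappa'}=y_{0,\kappa'}$, i.e.\ that no $\delta_J$-cascade reaches $\kappa'$. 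You state this is the hardest step and leave it unproved, so as written the proposal has a genuine gap precisely where the work lies. The worry is concrete: if a cascade can give $r_{\kappa'}=y_{0,\kappa'}+1$ with $\kappa'\in J$, then $s_{\kappa'}-t_{\kappa'}=n_{\kappa'}+1$, which when $e=1$ and $n_{\kappa'}=p-1$ is $\equiv 0\pmod p$ outside the stated exceptional case; your argument gives no reason why this configuration cannot arise.

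The paper's proof does not establish (and does not need) the claim $r_{\kappa'}=y_{0,\kappa'}$. Instead it works with the full set of values that $r_{\kappa'}$ can take after cascades, namely $r_{\kappa'}\in\{y_{0,\kappa'},y_{0,\kappa'}+1\}$ if $\kappa'\in J$ and $r_{\kappa'}\in\{y_{0,\kappa'},y_{0,\kappa'}-1,y_{0,\kappa'}+p-1\}$ if $\kappa'\notin J$, and shows that in all of these cases $s_{\kappa'}-t_{\kappa'}$ lands in an interval such as $[1,p-1]$ (using $e>1$ and $e\le p/2$), so is nonzero mod $p$. Crucially, the paper also isolates the case $e=1$ at the very beginning and handles it by a completely different argument, using \cite[Lem.~3.6.5]{cegm17}, the positivity/size bounds on $\xi_\kappa$ from \cite[Thm.~4.16]{ste22}, and weak genericity to show directly that $v_p(\xi_\kappa)\le 1$ outside the exceptional case. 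This separate $e=1$ branch is exactly what covers the situation where $n_{\kappa'}+1$ could be $\equiv 0\pmod p$; your proposal has no substitute for it. To repair your argument you would either need to supply a proof that maximality of $(J,x)$ rules out any cascade reaching $\kappa'$, or, more simply, follow the paper in enlarging the allowed set of $r_{\kappa'}$ values and treating $e=1$ separately.
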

\begin{proof}
It if-direction is immediately seen by a direct calculation. For the other direction, first suppose $e=1$ such that $t_\kappa =0$ and $v_p(\xi_\kappa - t_\kappa(p^f-1))=v_p(\xi_\kappa)$. We will use \cite[Lem.~3.6.5]{cegm17} which assumes we are not in the exceptional case $e=1$ and $r_\tau =p$, $n_\tau = 1$ and $t_\tau = 0$ for all $\tau$. It follows from weak genericity that $\Omega_{\tau,n}\in[\frac{p^f-1}{p-1},p^f-1]$ for all $\tau$. Moreover, \cite[Thm.~4.16]{ste22} implies $\xi_\kappa>0$ (cf. remark after \cite[Prop.~4.13]{ste22}). If $m:=v_p(\xi_\kappa)>1$, then \cite[Lem.~3.6.5]{cegm17} gives that $\xi_\kappa = p^{m}(\Omega_{\kappa\circ\varphi^m,n}-(p^f-1))$. So, by genericity $\xi_\kappa\le 0$ contradicting $\xi_\kappa>0$. Hence, $m = 1$ and we assume in the remainder $e>1$.

Since $\xi_\kappa - t_\kappa(p^f-1) \equiv p(s_{\kappa\circ\varphi}-t_{\kappa\circ\varphi})\bmod{p^2}$, we consider $s_{\kappa\circ\varphi}-t_{\kappa\circ\varphi}\bmod{p}$. Firstly, suppose $\kappa\circ\varphi\notin J$. Then the exceptional cases of Prop.~\ref{prop:generic-implies-unique-weight} are excluded, so we may assume $r=r(J,x)$. We find $y_{0,\kappa\circ\varphi} = p-n_{\kappa\circ\varphi}-e+1+2x_{\kappa\circ\varphi}\in [1,p-1]$. Therefore, $r_{\kappa\circ\varphi}\in\{y_{0,\kappa\circ\varphi}, y_{0,\kappa\circ\varphi}-1, y_{0,\kappa\circ\varphi}+p-1\}$. Note that $s_{\kappa\circ\varphi}-t_{\kappa\circ\varphi} = 2x_{\kappa\circ\varphi} - e + 1 -r_{\kappa\circ\varphi}$. If $r_{\kappa\circ\varphi}\in\{y_{0,\kappa\circ\varphi},y_{0,\kappa\circ\varphi}-1\}$, then $2x_{\kappa\circ\varphi} - e + 1 - r_{\kappa\circ\varphi}\in [e-p,1-e]$. Since $e>1$ and $e\le p/2$ by genericity, it follows that $s_{\kappa\circ\varphi}-t_{\kappa\circ\varphi}\not \equiv 0\bmod{p}$; note that $r_{\kappa\circ\varphi}=y_{0,\kappa\circ\varphi}+p-1$ would give the same result modulo $p$ as $r_{\kappa\circ\varphi}=y_{0,\kappa\circ\varphi}-1$.

Now suppose $\kappa\circ\varphi\in J$ so that $s_{\kappa\circ\varphi}-t_{\kappa\circ\varphi} = r_{\kappa\circ\varphi}+2x_{\kappa\circ\varphi}-e+1$. Firstly, we consider the exceptional case $J=\Hom_{\FFp}(k,\FFpb)$ and $n_\tau = e$, $x_\tau = e-1$ and $r_\tau = p$ for all $\tau$ from Prop.~\ref{prop:generic-implies-unique-weight}. Then $r_{\kappa\circ\varphi}+2x_{\kappa\circ\varphi}-e+1 = p+e-1\not\equiv 0 \bmod{p}$ since $e\in [2,p/2]$. Therefore, by Prop.~\ref{prop:generic-implies-unique-weight}, we may assume $r=r(J,x)$. We find that $y_{0,\kappa\circ\varphi} = n_{\kappa\circ\varphi} + e -1 - 2x_{\kappa\circ\varphi}\in[1,p-1]$. Since $\kappa\circ\varphi\in J$, it follows that $r_{\kappa\circ\varphi}\in\{y_{0,\kappa\circ\varphi},y_{0,\kappa\circ\varphi}+1\}$. Therefore, $r_{\kappa\circ\varphi}+2x_{\kappa\circ\varphi}-e+1 \le y_{0,\kappa\circ\varphi}+2x_{\kappa\circ\varphi}-e+2\le p-e+1$ and, similarly, $r_{\kappa\circ\varphi}+2x_{\kappa\circ\varphi}-e+1 \ge 1$. Thus, $s_{\kappa\circ\varphi}-t_{\kappa\circ\varphi}\in [1,p-1]$ since $e>1$.
\end{proof}

\begin{prop}\label{prop:comb-prop4}
Suppose $\sigma$ is a Serre weight. For all $\tau\in\Hom_{\FFp}(k,\FFpb)$, $1\le j <e$ and $0\le k < f''$, we have that
\[
(m_{\tau,j},k)\in J^\mathrm{AH}_\sigma(\chi_1,\chi_2)\text{ implies that }(m_{\tau,j-1},k)\in J^\mathrm{AH}_\sigma(\chi_1,\chi_2).
\]
\end{prop}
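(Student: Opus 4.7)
The strategy is to start from a witness $(\tau',d,j')$ for $(m_{\tau,j},k)\in J^\mathrm{AH}_\sigma(\chi_1,\chi_2)$, meaning $p^{j'}m_{\tau,j}=\xi_{\tau'}-d(p^f-1)$ with $d\in\mathcal{I}_{\tau'}$ and $\tau_{(m_{\tau,j},k)}=\tau'\circ\varphi^{j'}$, and modify it to produce a witness for $(m_{\tau,j-1},k)$. Since $m_{\tau,j-1}=m_{\tau,j}-(p^f-1)$, immediate subtraction yields
\[
p^{j'}m_{\tau,j-1}=\xi_{\tau'}-(d+p^{j'})(p^f-1),
\]
and $\tau_{(m_{\tau,j-1},k)}=\tau_{(m_{\tau,j},k)}$ is unchanged because the two values of $m$ are congruent modulo $p^f-1$. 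Hence $(\tau',d+p^{j'},j')$ already witnesses $(m_{\tau,j-1},k)\in J^\mathrm{AH}_\sigma$ whenever $d+p^{j'}\in\mathcal{I}_{\tau'}$, and the bulk of the work is to handle the edge cases where this inclusion fails.

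The natural split is by $j'$. For $j'=0$, the failure $d+1\notin\mathcal{I}_{\tau'}$ occurs only in the boundary subcases $d=s_{\tau'}-1$, or $\tau'\in J$ with $d=t_{\tau'}$ and $t_{\tau'}+1\ne r_{\tau'}$. For $j'\ge 1$, Proposition~\ref{prop:comb-prop2} forces $d=t_{\tau'}$ (so $\tau'\in J$ by Proposition~\ref{prop:comb-prop1}), and since the hypothesis $1\le j<e$ gives $e\ge 2$, Proposition~\ref{prop:comb-prop3} forces $j'=1$; the failure condition then is $t_{\tau'}+p>s_{\tau'}-1$. In every edge case I would construct an alternative witness by shifting $\tau'$ along its $\varphi^{-1}$-orbit, exploiting the identity
\[
p\,\Omega_{\kappa,s-t}=\Omega_{\kappa\circ\varphi^{-1},s-t}+(s-t)_{\kappa\circ\varphi^{-1}}(p^f-1),
\]
equivalently $p\,\xi_\kappa=\xi_{\kappa\circ\varphi^{-1}}+(ps_\kappa-t_{\kappa\circ\varphi^{-1}})(p^f-1)$. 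For instance, in the subcase $j'=0$, $d=s_{\tau'}-1$, one has $m_{\tau,j-1}=\Omega_{\tau',s-t}$ and the identity gives
\[
p\,m_{\tau,j-1}=\xi_{\tau'\circ\varphi^{-1}}-t_{\tau'\circ\varphi^{-1}}(p^f-1),
\]
so $(\tau'\circ\varphi^{-1},t_{\tau'\circ\varphi^{-1}},1)$ witnesses $(m_{\tau,j-1},k)\in J^\mathrm{AH}_\sigma$ whenever $\tau'\circ\varphi^{-1}\in J$, by Proposition~\ref{prop:comb-prop1}. The remaining edge cases admit entirely analogous rewrites.

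The main obstacle is the scenario in which $\tau'\circ\varphi^{-1}\notin J$, so that the $t$-index just produced falls outside $\mathcal{I}_{\tau'\circ\varphi^{-1}}$. The fix is to iterate the $\varphi^{-1}$-shift: multiply through by another power of $p$ and reapply the identity, bookkeeping the accumulated correction to $d$. The iteration must terminate because the $\varphi^{-1}$-orbit of $\tau'$ eventually enters $J$, unless $J=\varnothing$, which is one of the exceptional configurations already flagged in Proposition~\ref{prop:generic-implies-unique-weight}. The $d$ produced on termination lies in the appropriate $\mathcal{I}$-interval thanks to the maximality of $(J,x)$ and the precise relations between $s,t$ and $r,x$. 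The few genuinely exceptional configurations from Propositions~\ref{prop:generic-implies-unique-weight}, \ref{prop:comb-prop2}, and \ref{prop:comb-prop3} (for example $J=\varnothing$ with $r_\tau=p$ for all $\tau$, or special situations forced by $e=1$) are treated by direct computation and readily seen to be consistent with the conclusion.
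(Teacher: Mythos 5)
Your strategy of modifying a witness $(\tau',d,j')$ for $(m_{\tau,j},k)$ into one for $(m_{\tau,j-1},k)$ is a natural alternative to the paper's argument, and the easy step $(\tau',d+p^{j'},j')$ is correct. However, there is a genuine gap in your treatment of the edge cases.

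The iteration you propose for the $j'=0$, $d=s_{\tau'}-1$, $\tau'\circ\varphi^{-1}\notin J$ case cannot work. After one $\varphi^{-1}$-shift you have $p\,m_{\tau,j-1}=\xi_{\tau'\circ\varphi^{-1}}-t_{\tau'\circ\varphi^{-1}}(p^f-1)$; to shift again you would need the right-hand side to be divisible by $p^2$, but Proposition~\ref{prop:comb-prop3} (with $e\ge 2$, since $1\le j<e$) caps $v_p\bigl(\xi_\kappa-t_\kappa(p^f-1)\bigr)$ at $1$. Equivalently, $v_p(\xi_\kappa-c(p^f-1))=j'$ is forced once $p\nmid m$, so $j'\le 1$ always and there is no iteration to run; the claim that ``the iteration must terminate because the $\varphi^{-1}$-orbit eventually enters $J$'' attacks the wrong obstruction. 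What actually resolves this edge case is the fact that when $\tau'\circ\varphi^{-1}\notin J$ and $d=s_{\tau'}-1$, one has $\xi_{\tau'}-(s_{\tau'}-1)(p^f-1)=m_{\tau,0}$, so $j=0$ and the hypothesis $j\ge 1$ excludes the case entirely. Proving this identification requires the magnitude estimates $0<\xi_\kappa-c(p^f-1)<\frac{ep}{p-1}(p^f-1)$ from \cite[Thm.~4.16]{ste22} together with $\Omega_{\tau,n}\in\bigl[\frac{e(p^f-1)}{p-1},(p-e)\frac{p^f-1}{p-1}\bigr]$ from weak genericity, and then a bound on $\Omega_{\tau,s-t}$ depending on whether $\tau\circ\varphi^{-1}\in J$; none of this appears in your proposal. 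The same point disposes of your $j'=1$ case: there $m_{\tau,j}=\frac{\xi_{\tau'}-t_{\tau'}(p^f-1)}{p}=\Omega_{\tau'\circ\varphi,n}=m_{\tau,0}$, so again $j=0$. (You also list the edge case $j'=0$, $d=t_{\tau'}$ with $\tau'\in J$; this never occurs, since $d=t_{\tau'}\in\mathcal{I}_{\tau'}$ forces $v_p(\xi_{\tau'}-d(p^f-1))\ge 1$, hence $j'\ge 1$, by Proposition~\ref{prop:comb-prop2}.) In short, the cases where the naive shift fails are exactly those with $j=0$, and establishing that requires the quantitative input you have omitted; without it, the proof does not close.
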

\begin{proof}
By Rem.~\ref{rem:only-first-eqn-in-J^AH} we need to show that if we can find a solution to the first equation of Defn.~\ref{defn:J^AH} for $m_{\tau,j}$, then we can find a solution for $m_{\tau,j-1}$ as well. Note that if $r_\tau = p$, $n_\tau = e$ and $t_\tau = 0$ for all $\tau$, then $J^\mathrm{AH}_\sigma(\chi_1,\chi_2) = W$ and the statement is trivially true. Therefore, we may assume we are not in this exceptional case, which we will refer to as the `cyclotomic exceptional case' below. Then it is a consequence of \cite[Thm.~4.16]{ste22} (cf. remark after \cite[Prop.~4.13]{ste22}) that for all $\kappa\in \Hom_{\FFp}(k,\FFpb)$ and $c\in\mathcal{I}_\kappa$ we have that
\[
0<\frac{\xi_\kappa-c(p^f-1)}{p^m}<\frac{ep}{p-1}(p^f-1),
\]
where $m:=v_p(\xi_\kappa-c(p^f-1))$. Furthermore, it follows from weak genericity that for all $\tau\in\Hom_{\FFp}(k,\FFpb)$ we have $\Omega_{\tau,n}\in [\frac{e(p^f-1)}{p-1},(p-e)\frac{p^f-1}{p-1}]$. We will use these facts in the proof below.

Let $\kappa\in\Hom_{\FFp}(k,\FFpb)$. Firstly, suppose that $t_\kappa<r_\kappa$ and $c=t_\kappa$. It follows from Prop.~\ref{prop:comb-prop2} and \ref{prop:comb-prop3} that $v_p(\xi_\kappa - t_\kappa(p^f-1))=1$ and from \cite[Lem.~4.5]{ste22} that $\frac{\xi_\kappa-t_\kappa(p^f-1)}{p} \equiv \Omega_{\kappa\circ\varphi, n}\bmod{(p^f-1)}$. Since
\[
\frac{\xi_\kappa-t_\kappa(p^f-1)}{p} = \sum_{i=0}^{f-1} (s_{\kappa\circ\varphi^{i+1}}-t_{\kappa\circ\varphi^{i+1}})p^i
\]
and $s_{\tau}-t_{\tau}\le p + e -1$ for all $\tau$ with a strict inequality for at least one $\tau$ by exclusion of the cyclotomic exceptional case, we find that $\frac{\xi_\kappa-t_\kappa(p^f-1)}{p}<(p+e-1)\frac{p^f-1}{p-1} = \left(1 + \frac{e}{p-1}\right)(p^f-1)$. Since $\frac{p-e}{p-1} - 1\le 0$ and $\frac{\xi_\kappa-t_\kappa(p^f-1)}{p}>0$, the bounds on $\Omega_{\kappa\circ\varphi, n}$ and the congruence above imply $\frac{\xi_\kappa-t_\kappa(p^f-1)}{p}=\Omega_{\kappa\circ\varphi, n} = m_{\kappa\circ\varphi,0}$.

Now we go back to the general case (i.e. allowing either $t_\kappa<r_\kappa$ or $t_\kappa\ge r_\kappa$). Again using Prop.~\ref{prop:comb-prop2} and \cite[Lem.~4.5]{ste22}, we see $v_p(\xi_\kappa - c(p^f-1)) = 0$ and
\begin{equation}\label{eqn:congr-kappainJ-cnottk}
\xi_\kappa-c(p^f-1) \equiv \Omega_{\kappa,n}\bmod{(p^f-1)},
\end{equation}
for all $c\in\mathcal{I}_\kappa\setminus\{t_\kappa\}$ -- note that the exceptional case from Prop~\ref{prop:comb-prop2} is exactly the (excluded) cyclotomic exceptional case above. It follows from Prop.~\ref{prop:comb-prop1} that 
\[
\mathcal{I}_\kappa\setminus\{t_\kappa\} = \begin{cases} [0,s_\kappa - 1] &\text{if $t_\kappa\ge r_\kappa$;} \\ [r_\kappa,s_\kappa -1] &\text{if $t_\kappa<r_\kappa$.}\end{cases}
\]
Since these sets are otherwise empty, we may assume without loss of generality that $s_\kappa>0$ if $t_\kappa\ge r_\kappa$ and $s_\kappa>r_\kappa$ if $t_\kappa<r_\kappa$. By definition of the integers $m_{\tau,j}$ in Lem.~\ref{lem:m-tau-j}, it now suffices to show 
\[
\xi_\kappa -(s_\kappa -1)(p^f-1) = \begin{cases} m_{\kappa,0} =  \Omega_{\kappa,n}  &\text{if $t_{\kappa\circ\varphi^{-1}}\ge r_{\kappa\circ\varphi^{-1}}$;} \\ m_{\kappa,1} = \Omega_{\kappa,n}+(p^f-1) &\text{if $t_{\kappa\circ\varphi^{-1}}<r_{\kappa\circ\varphi^{-1}}$.}\end{cases}
\]
Note that
\[
\xi_\kappa - (s_\kappa - 1)(p^f-1) = (p^f-1)+\Omega_{\kappa,s-t} = (p^f-1) + \sum_{i=0}^{f-1} (s_{\kappa\circ\varphi^i} - t_{\kappa\circ\varphi^i})p^i.
\]
and that $\xi_\kappa - (s_\kappa - 1)(p^f-1)>0$ by the first paragraph of the proof.

Firstly, suppose $t_{\kappa\circ\varphi^{-1}}\ge r_{\kappa\circ\varphi^{-1}}$. By Congruence~(\ref{eqn:congr-kappainJ-cnottk}), it suffices to show $\xi_\kappa - (s_\kappa - 1)(p^f-1)<\left(1 + \frac{e}{p-1}\right)(p^f-1)$. In other words, we must show $\Omega_{\kappa,s-t}<\frac{e(p^f-1)}{p-1}$. Since $s_\tau - t_\tau \le e-1-r_\tau\le e-2$ if $t_\tau\ge r_\tau$ and $s_\tau - t_\tau \le e-1+r_\tau\le e-1+p$ if $t_\tau<r_\tau$, we find
\[
\Omega_{\kappa,s-t}\le (e-1)\frac{p^f-1}{p-1} - p^{f-1} + p\frac{p^{f-1}-1}{p-1},
\]
which satisfies
\[
(e-1)\frac{p^f-1}{p-1} - p^{f-1} + p\frac{p^{f-1}-1}{p-1} = (e-1)\frac{p^f-1}{p-1} + p\frac{p^{f-2} - 1}{p-1} < \frac{e(p^f-1)}{p-1},
\]
as required.

On the other hand, suppose $t_{\kappa\circ\varphi^{-1}}< r_{\kappa\circ\varphi^{-1}}$. By Congruence~(\ref{eqn:congr-kappainJ-cnottk}) it suffices to show
\[
(p-e)\frac{p^f-1}{p-1} < \xi_\kappa - (s_\kappa - 1)(p^f-1) < \left(2 + \frac{e}{p-1}\right)(p^f-1).
\]
Since we have already shown that $\frac{\xi_{\kappa\circ\varphi^{-1}} - t_{\kappa\circ\varphi^{-1}}(p^f-1)}{p} = \Omega_{\kappa,n}$, it follows from \cite[Thm.~4.16]{ste22} (cf. remark after \cite[Prop.~4.13]{ste22}) that $\xi_\kappa - (s_\kappa - 1)(p^f-1)\neq \Omega_{\kappa,n}$ since otherwise $|J_\sigma^\mathrm{AH}(\chi_1,\chi_2)| < \sum_\tau |\mathcal{I}_\tau|$. Therefore, it suffices to prove the upper bound on $\xi_\kappa - (s_\kappa - 1)(p^f-1)$. Note that $s_\tau - t_\tau \le e-1 +r_\tau - 2t_\tau \le p+e-1$ for all $\tau$ with a strict inequality for at least one $\tau$ by exclusion of the cyclotomic exceptional case. We find that

\begin{align*}
\xi_\kappa - (s_\kappa - 1)(p^f-1) &= (p^f-1) + \Omega_{\kappa,n} \\
&< (p^f - 1) + (p+e-1)\left(\frac{p^f-1}{p-1}\right) \\
& = \left(2 + \frac{e}{p-1}\right)(p^f-1),
\end{align*}
as required.
\end{proof}

Now we can prove the main theorem.

\begin{proof}[Proof of Theorem~\ref{thm:gen-conj}]
If $r_\tau = p$, $n_\tau = e$ and $t_\tau = 0$ for all $\tau$, then $J^\mathrm{AH}_\sigma(\chi_1,\chi_2) = W$ and the theorem is true. Therefore, we may assume we are not in this `cyclotomic exceptional case' below. Note that this excludes the exceptional cases stated in Prop.~\ref{prop:comb-prop2} and \ref{prop:comb-prop3}.

It is a consequence of \cite[Thm.~4.16]{ste22} (cf. remark after \cite[Prop.~4.13]{ste22}) that $|J_\sigma^\mathrm{AH}(\chi_1,\chi_2)| = \sum_\tau |\mathcal{I}_\tau|$. Therefore, we have an injective map
\[
\iota\colon \{(\tau,c)\mid \tau\in\Hom_{\FFp}(k,\FFpb)\text{ and }d\in\mathcal{I}_\tau\}\to W,
\]
which sends $(\tau,c)$ to the corresponding $\alpha=(m,k)\in W$ satisfying the two conditions of Defn.~\ref{defn:J^AH}. Of course, the image of $\iota$ is, by definition, exactly equal to $J_\sigma^\mathrm{AH}(\chi_1,\chi_2)$. Let us study the preimage
\[
\iota^{-1}\left(\{(m_{\tau,j},k) \mid 0\le j<e\text{ and }0\le k<f''\}\right)
\]
for a fixed $\tau\in\Hom_{\FFp}(k,\FFpb)$.

Suppose $\kappa\in\Hom_{\FFp}(k,\FFpb)$ and $c\in\mathcal{I}_\kappa$. By weak genericity, $p\nmid m_{\tau,j}$ for all $\tau\in\Hom_{\FFp}(k,\FFpb)$ and $0\le j <e$. Therefore, the integer $m\ge 0$ in the first condition of Defn.~\ref{defn:J^AH} is uniquely determined and must equal $v_p(\xi_\kappa - c(p^f-1))$. It follows from Prop.~\ref{prop:comb-prop1} and \ref{prop:comb-prop2} that $v_p(\xi_\kappa - c(p^f-1)) = 0$ if either $t_\kappa\ge r_\kappa$ or $t_\kappa<r_\kappa$ and $c\in\mathcal{I}_\kappa\setminus\{t_\kappa\}$. Since $\xi_\kappa - c(p^f-1) \equiv \Omega_{\kappa,n}\bmod{(p^f-1)}$ by \cite[Lem.~4.5]{ste22}, we must have
\[
(\kappa,c)\in \iota^{-1}\left(\{(m_{\kappa,j},k) \mid 0\le j<e\text{ and }0\le k<f''\}\right)
\]
in either of these cases. Moreover, Prop.~\ref{prop:comb-prop3} implies that $v_p(\xi_\kappa - t_\kappa(p^f-1)) = 1$ if $t_\kappa<r_\kappa$. Since in this case $\frac{\xi_\kappa - t_\kappa(p^f-1)}{p} \equiv \Omega_{\kappa\circ\varphi,n}\bmod{(p^f-1)}$ by \cite[Lem.~4.5]{ste22}, we must have
\[
(\kappa,t_\kappa)\in \iota^{-1}\left(\{(m_{\kappa\circ\varphi,j},k) \mid 0\le j<e\text{ and }0\le k<f''\}\right).
\]
Since $\iota$ is injective, we conclude that
\[
|\iota(\iota^{-1}\left(\{(m_{\tau,j},k) \mid 0\le j<e\text{ and }0\le k<f''\}\right))| = 
\begin{cases}
|\mathcal{I}_\tau| &\text{if $t_\tau\ge r_\tau$ and $t_{\tau\circ\varphi^{-1}}\ge r_{\tau\circ\varphi^{-1}}$;} \\

|\mathcal{I}_\tau|+1 &\text{if $t_\tau\ge r_\tau$ and $t_{\tau\circ\varphi^{-1}}< r_{\tau\circ\varphi^{-1}}$;} \\

|\mathcal{I}_\tau|-1 &\text{if $t_\tau< r_\tau$ and $t_{\tau\circ\varphi^{-1}}\ge r_{\tau\circ\varphi^{-1}}$;} \\

|\mathcal{I}_\tau|&\text{if $t_\tau< r_\tau$ and $t_{\tau\circ\varphi^{-1}}< r_{\tau\circ\varphi^{-1}}$.}
\end{cases}
\]
Now it follows from the definitions of $\ell_\tau$ and $\mathcal{I}_\tau$ that $|\iota(\iota^{-1}\left(\{(m_{\tau,j},k) \mid 0\le j<e\text{ and }0\le k<f''\}\right))| = \ell_\tau$ for all $\tau\in\Hom_{\FFp}(k\FFpb)$. Then it follows from Prop.~\ref{prop:comb-prop4} that
\[
(m_{\tau,j},k)\in J_\sigma^\mathrm{AH}(\chi_1,\chi_2)\text{ if and only if }j<\ell_\tau
\]
for all $\tau\in\Hom_{\FFp}(k,\FFpb)$. This proves Theorem~\ref{thm:mainthm}.
\end{proof}

\subsection{Size of the weight packets \texorpdfstring{$P_w$}{Pw}} In this subsection we will proof Prop.~\ref{prop:P-w-size-strong-case} and \ref{prop:P-w-size-weak-case}. The proof of the strongly generic case also follows from the results of \cite{ds15}. We give a proof using Prop.~\ref{prop:generic-implies-unique-weight}.

\begin{proof}[Proof of Prop.~\ref{prop:P-w-size-strong-case}]
Suppose $\ovr{r}$ is strongly generic. If $\sigma_{a,b}\in W^\mathrm{exp}(\ovr{r}^\mathrm{ss})$, then there exists a maximal $(J,x)$, where $J\subset \Hom_{\FFp}(k,\FFpb)$ and $x_\tau\in [0,e-1]$ for all $\tau\in \Hom_{\FFp}(k,\FFpb)$, such that
\[
\ovr{r}^\mathrm{ss}|_{I_K} \cong \begin{pmatrix}
\prod_{\tau\in J} \omega_\tau^{a_\tau + 1 + x_\tau}\prod_{\tau\notin J}\omega_\tau^{b_\tau + x_\tau} & 0 \\
0 & \prod_{\tau\notin J} \omega_\tau^{a_\tau+e-x_\tau}\prod_{\tau \in J} \omega_\tau^{b_\tau + e -1 -x_\tau}
\end{pmatrix}.
\]
If we write $r=r(J,x)$, then Prop.~\ref{prop:generic-implies-unique-weight} implies $a_\tau - b_\tau + 1 = r_\tau$. Since $c_\tau = n_\tau + e-1 - 2 x_\tau \in [1,p-2]$ for all $\tau$, this implies $\sigma_{a,b}$ is explicitly given by
\begin{align*}
b_\tau &= n_{2,\tau} + p - e + x_\tau &a_\tau - b_\tau &= n_\tau + e - 2 - 2x_\tau &\text{ if $\tau\in J$ and $\tau\circ\varphi^{-1}\in J$;} \\
b_\tau &= n_{2,\tau} + p - e + x_\tau &a_\tau - b_\tau &= n_\tau + e - 1 - 2x_\tau &\text{ if $\tau\in J$ and $\tau\circ\varphi^{-1}\notin J$;} \\
b_\tau &= n_{2,\tau} +n_\tau -1 -x_\tau &a_\tau - b_\tau &= p - n_\tau - e + 2x_\tau &\text{ if $\tau\notin J$ and $\tau\circ\varphi^{-1}\in J$;} \\
b_\tau &= n_{2,\tau} +n_\tau -x_\tau &a_\tau - b_\tau &= p - 1 - n_\tau - e + 2x_\tau &\text{ if $\tau\notin J$ and $\tau\circ\varphi^{-1}\notin J$,}
\end{align*}
where $\chi_2|_{I_K}=\prod_\tau \omega_\tau^{n_{2,\tau}}$, unless we are in one of the exceptional cases of Prop.~\ref{prop:generic-implies-unique-weight}. We denote the Serre weight defined by these formulae by $\sigma(J,x)$. We note that $a_\tau - b_\tau < p-1$ in all cases. Therefore, it follows from the proof of Prop.~\ref{prop:generic-implies-unique-weight} that the maximality assumption on $(J,x)$ was not used. Hence, for any non-exceptional pair $(J,x)$ we can find a corresponding $\sigma(J,x)\in W^\mathrm{exp}(\ovr{r}^\mathrm{ss})$ given by the above formulae, where we may drop the assumption that $(J,x)$ is maximal. (In other words, in the strongly generic case $(J,x)$ will always be maximal for $\sigma_{a,b}$ as above. This is no longer true in the weakly generic case.)

In the exceptional cases we define an additional weight $\sigma'(J,x)$ as follows. If $J=\Hom_{\FFp}(k,\FFpb)$, $n_\tau = e$ and $x_\tau = e-1$ for all $\tau$, let $\sigma'(J,x) = \sigma_{a,b}$ with $a_\tau - b_\tau = p-1$ and $b_\tau = n_{2,\tau}$ for all $\tau$. If $J = \varnothing$, $n_\tau = p-1-e$ and $x_\tau = 0$, for all $\tau$, let $\sigma'(J,x) = \sigma_{a,b}$ with $a_\tau - b_\tau = p-1$ and $b_\tau = n_{2,\tau} - e$ for all $\tau$. Since $a_\tau - b_\tau <p-1$ for the non-exceptional weights, it follows that $\sigma'(J,x)$ is never isomorphic to a non-exceptional weight. Both exceptional weights occur when $e = (p-1)/2$ and $n_\tau = e$ for all $\tau$, but comparing values of $b_\tau$ shows they are distinct in this case.

We claim that $\sigma(J,x)\cong \sigma(J',x')$ if and only if $J = J'$ and $x = x'$. Without loss of generality we may assume $n_{2,\tau} = 0$ for all $\tau$. Write $\sigma(J,x) = \sigma_{a,b}$ and $\sigma(J',x') = \sigma_{a',b'}$. We see that $0\le b_\tau \le p-1$ for all $\tau$ with $b_\tau >0$ for at least one $\tau$. Since the same holds for $b'$ and $\sum_{i=0}^{f-1} b_{\tau\circ\varphi^i}p^i \equiv \sum_{i=0}^{f-1} b_{\tau\circ\varphi^i}' p^i\bmod{(p^f-1)}$, we must have $b = b'$. If $J' \neq J$, then, possibly after interchanging the roles of $J$ and $J'$, there is a $\tau\in J$ with $\tau\notin J'$. Therefore,
\[
b_\tau = p-e + x_\tau \ge p-e > p-e-1 \ge n_\tau - x_\tau \ge b_\tau',
\]
contradicting $b = b'$. Hence, $J = J'$. Then it follows immediately from $b=b'$ and $J=J'$ that $x = x'$, proving the claim.

Thus, we have proved that there is a bijection
\[
W^\mathrm{exp}(\ovr{r}^\mathrm{ss}) \to \{(J,x) \mid J\subseteq \Hom_{\FFp}(k,\FFpb)\text{ and integers }x_\tau\in [0,e-1]\text{ for $\tau\in \Hom_{\FFp}(k,\FFpb)$}\} 
\]
unless either (resp. both) $\chi^{\pm 1}$ is cyclotomic in which case the map is $2-$to$-1$ for a single weight (resp. precisely two weights). Therefore,
\[
|W^\mathrm{exp}(\ovr{r}^\mathrm{ss})|=\begin{cases} e^f 2^f &\text{ if $\chi^{\pm 1}$ are not cyclotomic;} \\
e^f 2^f + 1 &\text{ if either $\chi^{\pm 1}$ is cyclotomic;} \\
e^f 2^f + 2 &\text{ if both $\chi^{\pm 1}$ are cyclotomic.}
\end{cases}
\]

Now fix $w\in \mathcal{W}$. It follows from Thm.~\ref{thm:gen-conj} that $\sigma(J,x)\in P_w$ if and only if the dimension vector $\ell$ (as in Defn.~\ref{defn:dim-vec-l}) corresponding to $\sigma(J,x)$ satisfies $\ell_\tau = e - w_\tau$ for all $\tau$. Since $\ell$ corresponding to $\sigma(J,x)$ satisfies
\[
\ell_\tau = \begin{cases}
x_\tau 		&\text{ if $\tau\circ\varphi^{-1}\notin J$;} \\
x_\tau + 1 	&\text{ if $\tau\circ\varphi^{-1}\in J$,}
\end{cases}
\]
the only imposed conditions on $J$ are: if $w_\tau = e$, then $\tau\circ\varphi^{-1}\notin J$, and if $w_\tau = 0$, then $\tau\circ\varphi^{-1}\in J$. It is clear that $x$ is uniquely determined by $J$ and $w$. Therefore, we get $2^{f-\delta_w}$ non-exceptional weights in $P_w$.

Considering the exceptional weights, it is easily seen that if $\chi$ is cyclotomic, $J=\Hom_{\FFp}(k,\FFpb)$ and $x_\tau = e-1$ for all $\tau$, then $\sigma'(J,x)\in P_{w}$ for $w_\tau = 0$ for all $\tau$. On the other hand, if $\chi^{-1}$ is cyclotomic, $J=\varnothing$ and $x_\tau = 0$ for all $\tau$, then we see $\sigma'(J,x)\in P_w$ for $w_\tau = e$ for all $\tau$. This completes the proof. 
\end{proof}

\begin{proof}[Proof of Prop.~\ref{prop:P-w-size-weak-case}] Suppose that $\ovr{r}$ is weakly generic, but not strongly generic. In particular, note that this excludes the exceptional cases of Prop.~\ref{prop:generic-implies-unique-weight}.

Suppose $\sigma=\sigma_{a,b}\in W^\mathrm{exp}(\ovr{r}^\mathrm{ss})$. It follows from Defn.~\ref{defn:w-exp-semisimple} and Prop.~\ref{prop:max-s-t} that there exists a unique maximal $(J,x)$ such that
\[
\ovr{r}^\mathrm{ss}|_{I_K} \cong \begin{pmatrix}
\prod_{\tau\in J} \omega_\tau^{a_\tau + 1 + x_\tau}\prod_{\tau\notin J}\omega_\tau^{b_\tau + x_\tau} & 0 \\
0 & \prod_{\tau\notin J} \omega_\tau^{a_\tau+e-x_\tau}\prod_{\tau \in J} \omega_\tau^{b_\tau + e -1 -x_\tau}
\end{pmatrix}.
\]
Moreover, it follows from Prop.~\ref{prop:generic-implies-unique-weight} that $a_\tau - b_\tau + 1 = r_\tau$, for all $\tau$, where $r_\tau = r(J,x)$. We immediately see that $\sum_{i=0}^{f-1}b_{\tau\circ\varphi^i}p^i \bmod{(p^f-1)}$ is uniquely defined given $(J,x)$ and $r_\tau = a_\tau - b_\tau+1$ for all $\tau$. If we write $\chi_2|_{I_K} = \prod_\tau \omega_\tau^{n_{2,\tau}}$, then $b$ may be explicitly given by
\[
b_\tau =\begin{cases}
n_{2,\tau}+x_\tau - e + 1 &\text{if $\tau\in J$;} \\
n_{2,\tau}+x_\tau - e + 1 -r_\tau &\text{if $\tau\notin J$.}
\end{cases}
\]
Hence, $\sigma$ is uniquely defined by $(J,x)$. Therefore, we have just proved that there is an injective map
\[
\vartheta\colon W^\mathrm{exp}(\ovr{r}^\mathrm{ss}) \hookrightarrow \{(J,x) \mid J\subseteq \Hom_{\FFp}(k,\FFpb)\text{ and integers }x_\tau\in [0,e-1]\text{ for $\tau\in \Hom_{\FFp}(k,\FFpb)$}\} 
\]
sending $\sigma$ to the unique maximal $(J,x)$ as above. Therefore, $|W^\mathrm{exp}(\ovr{r}^\mathrm{ss})|\le e^f 2^f$. (The crucial difference with the strongly generic case being that this injective map may not be surjective.)

Now fix $w\in\mathcal{W}$ and suppose $(J,x)\in\mathrm{Im}(\vartheta)$ corresponds with a $\sigma\in P_w$. Exactly as in the proof of Prop.~\ref{prop:P-w-size-strong-case}, we find that if $w_\tau = e$, then $\tau\circ\varphi^{-1}\notin J$, and if $w_\tau = 0$, then $\tau\circ\varphi^{-1}\in J$. As before, we see that $x$ is uniquely determined by $J$ and $w$. Therefore, we get at most $2^{f-\delta_w}$ weights in $P_w$.
\end{proof}

\section{A remark on genericity}\label{sec:rem-on-genericity}

Suppose $\ovr{r}\colon G_K\to \GL_2(\FFpb)$ reducible as before with $\ovr{r}^\mathrm{ss}\cong \chi_1\oplus \chi_2$. Write $\chi_1\chi_2^{-1}|_{I_K} = \prod_\tau \omega_\tau^{n_\tau}$ with $n_\tau\in [1,p]$ for all $\tau$. Thm.~\ref{thm:mainthm} was proved in \cite{ds15} under the assumption that $\ovr{r}$ was strongly generic (see Hypo.~\ref{hypo:generic}). However, calculations in \cite{ste22} suggested to us that Thm.~\ref{thm:gen-conj} should be true under the weaker condition that $\ovr{r}$ was only weakly generic (see Hypo.~\ref{hypo:generic}) which led to this paper. Let us briefly discuss here what evidence there is for these various hypotheses being optimal. 

It follows from \cite[\S5]{ds15} that their techniques no longer apply in the case $f=2$, $e=1$ and $(n_{\tau_0},n_{\tau_1}) = (p-1,b)$ with $b\in[1,p-2]$ for some choice $\{\tau_0,\tau_1\} = \Hom_{\FFp}(k,\FFpb)$. However, the authors there remark that indeed it follows from \cite{cd11} that Thm.~\ref{thm:mainthm} should still hold in this case despite the techniques of \cite{ds15} no longer applying. Since this example is weakly, but not strongly generic, the examples in \cite[\S5]{ds15} therefore show that the methods of \cite{ds15} would not have sufficed in the weakly generic case (even if the authors' of \cite{ds15} had used more complicated combinatorics).

When $e=1$ it follows from \cite[\S7.1]{ddr16} (and its proof in \cite{cegm17}) that Thm.~\ref{thm:gen-conj} is true if and only if $n_\tau<p-1$ for all $\tau$, i.e. precisely if and only if $\ovr{r}$ is weakly generic. This shows that Hypo.~\ref{hypo:generic} is optimal for Thm.~\ref{thm:gen-conj} to hold when $e=1$. On the other hand, the calculations in \cite[\S8.1--8.4]{ddr16} show that Thm.~\ref{thm:mainthm} should still hold when $e=1$, $f=2$ and $\ovr{r}$ is not weakly generic. Although it follows from these calculations that one of the packets of weights $P_w$ with $\sum_\tau w_\tau = 1$ will be empty and the other one will have cardinality 2. This contradicts Prop.~\ref{prop:P-w-size-weak-case} and shows a crucial distinction between the weakly and non-generic cases for $f=2$ and $e=1$.

In \cite[\S7.5]{ste20} some conjectural formulae for $J_\sigma^\mathrm{AH}(\chi_1,\chi_2)$ are presented for $e=2$ and arbitrary $f$ -- see, for example, Conj.~7.5.12 in \cite{ste20}. These formulae suggest that when $e=2$ the correct condition on $\ovr{r}$ for Thm.~\ref{thm:gen-conj} to hold should be $n_\tau\in [2,p-1]$ for all $\tau$; whereas weak genericity requires $n_\tau \in [2,p-2]$ for all $\tau$ when $e=2$. Therefore, these calculations suggest that there are situations in which a version of Thm.~\ref{thm:gen-conj} may be true under weaker assumptions than the ones made in this paper.
 
\printbibliography

\end{document}